\documentclass[a4paper]{article}

\usepackage{amsmath}
\usepackage{amssymb}
\usepackage{amsthm}
\usepackage{mathrsfs}
\usepackage{wasysym}
\usepackage{bbm}		
\usepackage{mathtools}
\usepackage[shortlabels]{enumitem}
\usepackage{braket}		
\usepackage{mdwlist}		
\usepackage{xcolor} 
\usepackage{tikz}
\usepackage{graphicx}
\usepackage[a4paper, left=2.7cm, right=2.7cm, top=3.5cm, bottom=2cm]{geometry}
\usepackage[pdftex=true,hyperindex=true,pdfborder={0 0 0}]{hyperref}
\usetikzlibrary{patterns,positioning,arrows,decorations.markings,calc,decorations.pathmorphing,decorations.pathreplacing}

\theoremstyle{plain}
\newtheorem{theorem}{Theorem}[section]
\newtheorem{lemma}[theorem]{Lemma}
\newtheorem{corollary}[theorem]{Corollary}
\newtheorem{proposition}[theorem]{Proposition}
\newtheorem{definition}[theorem]{Definition}

\newtheorem{question}[theorem]{Question}
\newtheorem{claim}[theorem]{Claim}
\theoremstyle{definition}
\newtheorem{remark}[theorem]{Remark}
\newtheorem{example}[theorem]{Example}


\newcommand{\cuadri}{\square^{\mathbb{Z}^2}}
\newcommand{\GW}{X_{\texttt{GW}}}
\newcommand{\ZZ}{\mathbb{Z}}			
\newcommand{\NN}{\mathbb{N}}			

\newcommand{\cubito}{\begin{tikzpicture}[scale = 0.2]
	\fill[black!30] (0,0) -- (1,0) -- (1.5,0.5) -- (1.5,1.5) -- (0.5,1.5) -- (0,1) -- (0,0);
	\draw (0,0) rectangle (1,1);
	\draw (0.5,1.5) -- (1.5,1.5);
	\draw (1.5,0.5) -- (1.5,1.5);
	\draw (1,0) -- (1.5,0.5);
	\draw (0,1) -- (0.5,1.5);
	\draw (1,1) -- (1.5,1.5);
	\end{tikzpicture}}
\newcommand{\nubito}{\begin{tikzpicture}[scale = 0.2]
	\draw (0,0) rectangle (1,1);
	\draw (0.5,0.5) rectangle (1.5,1.5);
	\draw (0,0) -- (0.5,0.5);
	\draw (1,0) -- (1.5,0.5);
	\draw (0,1) -- (0.5,1.5);
	\draw (1,1) -- (1.5,1.5);
	\end{tikzpicture}}

\newcommand{\IE}{\mathtt{IE}}
\newcommand{\R}{\mathtt{A}}
\newcommand{\EP}{\mathtt{E}}


\renewcommand{\complement}{
	\mathsf{c}%
}
\newcommand{\supp}{
	\operatorname{\mathrm{supp}}%
}




\title{%
	A hierarchy of topological systems with completely
	positive entropy\\
	\footnotetext{Last update:~\today}
}

\author{Sebasti\'an Barbieri\thanks{University of British Columbia}~ and Felipe Garc\'ia-Ramos\thanks{CONACyT \& Universidad Aut\'onoma de San Luis Potos\'i}}

\date{}

\begin{document}
	
	\maketitle
	
	%
	
	\begin{abstract}
	We define a hierarchy of systems with topological completely positive entropy in the context of continuous countable amenable group actions on compact metric spaces. For each countable ordinal we construct a dynamical system on the corresponding level of the aforementioned hierarchy and provide subshifts of finite type for the first three levels. We give necessary and sufficient conditions for entropy pairs by means of the asymptotic relation on systems with the pseudo-orbit tracing property, and thus create a bridge between a result by Pavlov and a result by Meyerovitch. As a corollary, we answer negatively an open question by Pavlov regarding necessary conditions for completely positive entropy. 	
	\end{abstract}

\section{Introduction}

\bigskip

In ergodic theory a K-system is a measurable dynamical system that satisfies
Kolmogorov's zero-one law. These systems were characterized by Rokhlin and
Sinai as those where every non-trivial factor has positive entropy or as those
where every non-trivial partition has positive entropy
\cite{rohlin1961structure}. In \cite{blanchard1992fully} Blanchard introduced
two topological analogues of the K-systems, that is, topological dynamical
systems with uniformly positive entropy (UPE) and topological completely
positive entropy (CPE), which can be defined respectively as those for which
every standard open cover has positive entropy and those for which every non-trivial
factor has positive entropy. Even though for some families, such as $\mathbb{Z}$-subshifts of
finite type and expansive algebraic $\mathbb{Z}^{d}$-actions, UPE and
topological CPE coincide, in general, topological CPE\ does not imply UPE.

In order to understand the properties of these systems, Blanchard introduced
the notion of entropy pairs in \cite{Blanchard1993}. This seminal work is the
birth of what is now called local entropy theory (see \cite{glasner2009local}
for a survey). Loosely speaking, a pair of points is an entropy pair if every standard
open cover which separates them has positive topological entropy. Blanchard showed (for
$\mathbb{Z}$-actions) that a system has positive topological entropy if and
only if there exists an entropy pair; a system has UPE if and only if every
non-trivial pair is an entropy pair; and that the system has topological CPE if and only
if the smallest closed equivalence relation containing the entropy pairs is
the whole set $X^{2}$. These results were generalized to actions of countable amenable groups by Kerr and Li ~\cite{KerrLi2007} by means of characterizing entropy pairs with
the notion of independence. This point of view paved the way to a
combinatorial study of topological entropy; even in the case of sofic groups
actions. See Chapter 12 of~\cite{KerrLiBook2016}. 

In this paper we will introduce a hierarchy of topological dynamical systems
that lie between UPE and topological CPE (see Section 2). A noteworthy remark is that the set of entropy pairs plus the diagonal is closed but not
necessarily an equivalence relation. The hierarchy is defined as follows:\ the first level of the hierarchy corresponds to the systems where
the entropy pairs and the diagonal are the whole product space, that is, systems with UPE; the second
level consists of the systems which are not on the first level and such that the smallest equivalence relation that contains the
entropy pairs is the whole product space. The third level is constituted by the
systems which are neither on the first nor second level and such that the topological closure of the smallest equivalence relation that contains
the entropy pairs is the whole product space. Subsequent levels of the hierarchy correspond to those systems for which the smallest closed equivalence relation containing the entropy pairs is the whole product space, but that require a larger number of alternating transitive and topological closures to stabilize. We show that in general this hierarchy does not collapse at any countable ordinal.

{ \renewcommand{\thetheorem}{\ref{teo_onichan}} 
	
	\begin{theorem}
	For every countable ordinal $\alpha$ the CPE class $\alpha$ is non-empty. 
\end{theorem}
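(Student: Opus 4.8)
The plan is to proceed by transfinite recursion, constructing for each countable ordinal $\alpha$ an explicit system — a subshift of finite type over a suitable countable amenable group — and controlling the complexity of its entropy-pair structure through the bridge result announced above, which on systems with the pseudo-orbit tracing property identifies the entropy pairs with (the off-diagonal part of) the asymptotic relation $\R$. This reduces the theorem to a combinatorial statement: for each $\alpha$, produce a subshift $X_\alpha$ with POTP whose asymptotic relation $\R(X_\alpha)$ reaches $X_\alpha^2$ after exactly $\alpha$ alternating applications of the two operations ``reflexive--symmetric--transitive closure'' and ``topological closure'', and not after fewer. Since $\EP(X)\cup\Delta$ is already closed, odd levels will terminate with a topological closure and even levels with a transitive one, a parity that the recursion must respect.

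First I would settle the low levels, which also yield the SFT examples for the first three classes promised in the abstract: class $1$ is any UPE system (e.g.\ a full shift); class $2$ is realized by an SFT whose asymptotic relation generates $X^2$ only after a single transitive closure — essentially the system underlying Pavlov's theorem; class $3$ by an SFT for which the transitive closure of $\R$ is a proper, non-closed equivalence relation whose topological closure is everything. For the successor step, given $X_\alpha$ of class $\alpha$ I would build $X_{\alpha+1}$ by a gluing/suspension construction: enlarging the alphabet and, if needed, the acting group (for instance passing from $G$ to $G\times\ZZ$, or working with $\ZZ^{d}$ for growing $d$), attach to a faithful copy of $X_\alpha$ one new ``layer'' — a sequence of sub-copies accumulating on a fixed orbit — arranged so that each newly created asymptotic pair across the layer costs exactly one extra closure operation, of the parity that makes the count increase by one (a transitive closure to traverse the new orbit of copies, or a topological closure to reach the accumulation point). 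One must then check that the enlarged subshift retains POTP, is still acted on by a countable amenable group, and that the Meyerovitch-type entropy lower bounds survive the gluing, so the new asymptotic pairs are genuine entropy pairs.

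For a limit ordinal $\lambda=\sup_n\alpha_n$, I would amalgamate the systems $X_{\alpha_n}$ into a single compact subshift in which a sequence of disjoint ``regions'' look respectively like $X_{\alpha_1},X_{\alpha_2},\dots$, all degenerating onto a common fixed point at infinity. Generating $X_\lambda^2$ then requires, region by region, $\alpha_n$ operations for each $n$, so the alternating closure process cannot stabilize before stage $\lambda$; conversely a bookkeeping argument over the regions shows stage $\lambda$ suffices. The delicate point here is to prevent the accumulation at infinity from secretly contributing one more alternation, so that the class is exactly $\lambda$ and not $\lambda+1$; this is arranged by making the fixed point already asymptotic to a suitable point in each region.

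The main obstacle, at every step, is the \emph{lower} bound — showing that strictly fewer than $\alpha$ alternations genuinely fail. This splits into a rank/well-foundedness computation on the combinatorial skeleton of $X_\alpha$ to pin down the exact ordinal at which the alternating closures stabilize, and a verification that at each intermediate stage the relation obtained is neither already transitive nor already closed, which is precisely what keeps $X_\alpha$ off all lower classes. Superimposed on this is the analytic content: maintaining the pseudo-orbit tracing property and the entropy estimates under the recursive constructions so that the bridge theorem keeps applying and the asymptotic pairs built by hand are certifiably entropy pairs. I expect the limit step to be the most technically demanding, as it is there that controlling the parity of the surplus alternation is most fragile.
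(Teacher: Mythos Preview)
Your strategy diverges from the paper's in a crucial way, and the divergence creates a real gap. You want every $X_\alpha$ to be a subshift with POTP so that the bridge theorem converts the problem into a purely combinatorial statement about the asymptotic relation. But POTP is a rigid property, and the constructions you sketch for the successor and limit steps --- attaching layers of sub-copies that accumulate on a fixed orbit, amalgamating infinitely many regions degenerating onto a common fixed point --- are precisely the sort of operations that destroy shadowing. You acknowledge ``maintaining the pseudo-orbit tracing property \dots\ under the recursive constructions'' as a task to be checked, but in fact it is the heart of the matter, and you give no mechanism for it. The same applies to the fully supported invariant measure required by the bridge theorem. Moreover, the suggestion of enlarging the acting group at each step (passing to $G\times\ZZ$ or to $\ZZ^d$ for growing $d$) cannot survive a transfinite recursion of length an arbitrary countable ordinal.

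The paper sidesteps all of this by abandoning POTP entirely for the general construction and working directly with entropy pairs on arbitrary compact metric $G$-spaces. The key tool is not the bridge theorem but a simple functoriality lemma: any $G$-equivariant continuous map $\phi$ satisfies $\phi(\EP_\beta(X,T))\subset\EP_\beta(Y,S)$ for every ordinal $\beta$. Starting from the full shift $X_1=\{0,1\}^G$, each successor $X_\alpha$ is built as a quotient of $X_{\alpha-1}\times\{1,2,3\}$ or $X_{\alpha-1}\times I$ (with $I=\{1/n\}\cup\{0\}$) by identifying copies of two chosen fixed points; the limit step is a one-point amalgamation of a cofinal sequence $X_{\alpha(n)}$. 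The group $G$ never changes, and the action on the new coordinate is trivial. The upper bound on the class comes from embedding each copy of $X_{\alpha-1}$ via the functoriality lemma; the lower bound comes from explicit continuous equivariant \emph{retractions} back onto $X_{\alpha-1}$ that would collapse the ordinal by one if the pair in question were already in $\EP_{\alpha-1}$. This is considerably more elementary than what you propose, and it is what makes the transfinite step go through.
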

	
	\addtocounter{theorem}{-1}}

 Furthermore we show that in the first 3 levels of the
hierarchy we can find $\mathbb{Z}^{d}$-SFTs (Corollary~\ref{corolario_elronniedance} and Theorem~\ref{corolario_lapapa}). This exemplifies how rich, from the
topological dynamics point of view, is the class of SFTs. In order to construct
these examples we first needed to better understand entropy pairs in the setting of SFTs or
more generally on dynamical systems with the pseudo-orbit tracing property. It
turns out that they are closely related to the notion of asymptotic pairs.

We say $(x,y)$ is an asymptotic pair if for every $\varepsilon>0$ the set of
elements $g$ in the group for which $d(gx,gy)>\varepsilon$ is finite. In
\cite{ChungLi2014} Chung and Li asked if every expansive action of a countable amenable group with
positive topological entropy has a non-trivial (i.e. $x\neq y$) asymptotic pair.
Recently Meyerovitch~\cite{Meyerovitch2017} showed that in general this is
not true. Nonetheless there are two families: an algebraic one \cite{ChungLi2014}\cite{LindSchdmit1999}
and expansive systems with the pseudo-orbit tracing property (POTP) \cite{Meyerovitch2017} where the result holds.

In this paper, we characterize exactly how the local theory of entropy
and asymptotic pairs are related in the context of countable amenable group actions with the pseudo-orbit tracing property. We give a localized version of Meyerovitch's result by
using the formalism of entropy pairs and show that a sort of converse also holds.  
%

Let us state our result precisely. A $G$-topological dynamical system
($G$-TDS) is a pair $(X,T)$ where $X$ is a compact metric space and $T$ is a left
$G$-action on $X$ by homeomorphisms. Denote the diagonal of $X^{2}$ by
$\Delta$, the set of entropy pairs by $\mathtt{E}(X,T)$ and by $\mathtt{A}%
^{\varepsilon}(X,T)$ the set of pairs $(x,y)$ for which $d(gx,gy)>\varepsilon$
for finitely many $g\in G$ and by $\mathtt{A}(X,T)=\bigcap_{\varepsilon
>0}\mathtt{A}^{\varepsilon}(X,T)$ the set of asymptotic pairs. Let $G$ be a
countable amenable group. Our result can be stated as follows:

{ \renewcommand{\thetheorem}{\ref{pipeteorema}} 

\begin{theorem}
	Let $(X,T)$ be a $G$-TDS with the pseudo-orbit tracing property.
	
	\begin{enumerate}
		\item If $(x,y)\in\EP(X,T)$ then $(x,y)\in\overline{\R^{\varepsilon
			}(X,T)}\diagdown\Delta$ for every $\varepsilon>0$ and there exists an
		invariant measure $\mu$ such that $x,y\in \operatorname{supp}(\mu).$
		
		\item If $(x,y)\in \R(X,T)$ and there exists an invariant
		measure $\mu$ such that $x,y\in \operatorname{supp}(\mu)$ then
		$(x,y)\in\EP(X,T)\cup\Delta$. 
	\end{enumerate}
\end{theorem}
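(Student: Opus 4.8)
The plan is to treat the two implications separately, relying on the combinatorial characterization of entropy pairs via independence (Kerr–Li) for one direction and on a direct orbit-gluing construction using the pseudo-orbit tracing property (POTP) for the other.

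For part (1), suppose $(x,y)\in\EP(X,T)$. By the Kerr–Li characterization, the pair $(\{U_x,U_y\})$ formed by disjoint neighbourhoods of $x$ and $y$ has an independence set of positive density, and in particular one can extract, for any finite $F\subseteq G$ and any $\delta>0$, large finite sets on which the two neighbourhoods are independent; averaging the corresponding empirical measures along a Følner sequence yields an invariant measure $\mu$ charging both $U_x$ and $U_y$, and a standard argument (letting the neighbourhoods shrink, using compactness of the space of invariant measures) produces an invariant $\mu$ with $x,y\in\supp(\mu)$. For the asymptotic-closure statement, fix $\varepsilon>0$; I want to show $(x,y)\in\overline{\R^{\varepsilon}(X,T)}$. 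Using the positive-density independence set $E$ for a pair of $\varepsilon/2$-separated neighbourhoods, pick a point $z$ whose orbit realizes, on a large Følner set $F$, the pattern ``looks like $x$ on $F\cap E$ and like $y$ on $F\setminus E$'' — but this still has infinitely many disagreements with both $x$ and $y$. The right move is instead: from positive density of the independence set, find arbitrarily long ``runs'' (or use that we only need a pair approximating $(x,y)$, not equalling it), and then invoke POTP to correct a finite modification into an actual orbit. Concretely, build a pseudo-orbit that agrees with the $x$-orbit on a huge Følner set, then transitions and agrees with the $y$-orbit thereafter; shadowing gives a genuine point $x'$ with $x'$ close to $x$ and $Tx'$ asymptotic to the $y$-orbit, i.e. $(x',y')\in\R^{\varepsilon}$ with $(x',y')$ close to $(x,y)$. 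Taking the neighbourhoods to shrink yields $(x,y)\in\overline{\R^{\varepsilon}(X,T)}\setminus\Delta$.

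For part (2), suppose $(x,y)\in\R(X,T)$ with $x\neq y$ and let $\mu$ be invariant with $x,y\in\supp(\mu)$. I want to produce positive independence density for an arbitrary pair of disjoint closed neighbourhoods $A\ni x$, $B\ni y$, which by Kerr–Li forces $(x,y)\in\EP(X,T)$. The idea is a counting/entropy argument: since $x,y$ are asymptotic, outside a finite set $F_0\subseteq G$ the orbits of $x$ and $y$ are $\varepsilon$-close; since both lie in $\supp(\mu)$ and $\mu$ has positive entropy on the relevant sub-$\sigma$-algebra (or, more carefully, since the sets $A,B$ both have positive measure under some ergodic component and one can return to them with controlled frequency), one uses POTP to glue together many translated ``copies'' of the asymptotic pair: for a large Følner set $F$ and a maximal $(F,F_0)$-separated set of translates inside an even larger Følner set, each translate independently may be set to the $x$-pattern or the $y$-pattern, and POTP turns each of the $2^{N}$ prescriptions into a genuine orbit segment that lies in $A$ resp. $B$ at the chosen coordinates. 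This exhibits an independence set for $(A,B)$ of size $N$ comparable to $|F|$, hence positive density, so $(x,y)$ is an entropy pair.

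The main obstacle, in both directions, is the bookkeeping around the pseudo-orbit tracing property in the amenable (non-$\mathbb{Z}$) setting: POTP must be used to convert finitely-supported modifications of genuine orbits into genuine orbits while controlling the tracing error uniformly, and in part (2) one must ensure the $2^{N}$ gluings are genuinely independent over the neighbourhoods $A,B$ — i.e. that the shadowing error does not push points out of $A$ or $B$ — which requires choosing $\varepsilon$ small relative to the separation of $A$ and $B$ and the tracing constant, and choosing the separation scale $F_0$ large enough that distinct translated copies do not interfere. Relatedly, in part (2) one must extract the requisite positive frequency of good return times from $x,y\in\supp(\mu)$ together with the amenable pointwise ergodic theorem; handling non-ergodic $\mu$ (passing to ergodic components that still charge neighbourhoods of both $x$ and $y$) is a technical point to get right.
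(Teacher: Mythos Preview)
Your plan for part (ii) is essentially the paper's approach: use $x\in\supp(\mu)$ and the pointwise ergodic theorem to find a reference point $w$ whose orbit returns to a small neighbourhood $Q_{\delta'}(x)$ along a positive-density set $J$, extract a suitably separated subset $H\subset J$, and then for any finite $I\subset H$ and any choice function $\phi:I\to\{x,y\}$ build an $(S,\delta)$-pseudo-orbit by overwriting the orbit of $w$ near each $i\in I$ with the orbit of $\phi(i)$ and filling in with the orbit of $w$ elsewhere. The asymptotic relation between $x$ and $y$ is exactly what makes the overwriting consistent at the transitions. Your worry about non-ergodic $\mu$ is unnecessary: the pointwise ergodic theorem applied to the indicator of $Q_{\delta'}(x)$ already gives the positive-density recurrence set, no ergodic decomposition needed.

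The gap is in part (i). Your proposed pseudo-orbit ``agrees with the $x$-orbit on a huge F\o lner set, then transitions and agrees with the $y$-orbit thereafter'' is not an $(S,\delta)$-pseudo-orbit: there is no reason the orbits of $x$ and $y$ should be $\delta$-close at the transition boundary. And even if it were, shadowing would give a single point $z'$ close to $x$ on $F$ and to $y$ off $F$, not an $\varepsilon$-asymptotic \emph{pair} near $(x,y)$. The paper's argument is different and hinges on a pigeonhole step you are missing. One fixes a finite $\delta$-cover $\mathcal{C}$, lets $\partial K_m = \bigvee_{g\in S\cdot F_m\setminus F_m} T^{g^{-1}}\mathcal{C}$, and observes that for $m$ large
\[
2^{|J\cap F_m|} \;>\; N(\partial K_m)\;\le\;|\mathcal{C}|^{|S\cdot F_m\setminus F_m|},
\]
since the left side grows like $2^{c|F_m|}$ while the right is sub-exponential in $|F_m|$ by the F\o lner condition. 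Independence gives $2^{|J\cap F_m|}$ points realizing all $\{U_x,U_y\}$-patterns on $J\cap F_m$; pigeonhole forces two of them, say $x',y'$, to lie in the same atom of $\partial K_m$, hence $d(T^g x',T^g y')\le\delta$ for all $g\in S\cdot F_m\setminus F_m$, while $T^j x'\in U_x$ and $T^j y'\in U_y$ for some $j\in J\cap F_m$. Now the pseudo-orbit is ``follow $x'$ on $F_m$, follow $y'$ outside'', which \emph{is} an $(S,\delta)$-pseudo-orbit because $x'$ and $y'$ agree on the boundary. Shadowing yields $z'$, and the desired $\varepsilon$-asymptotic pair is $(T^j z', T^j y')$: the second coordinate is a genuine orbit all along, so the pair differs only on the finite set $F_m\cdot j^{-1}$ up to $\varepsilon$. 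Without this counting step you have no mechanism to produce two points that match on a boundary shell while separating at an interior coordinate.
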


\addtocounter{theorem}{-1}}

Even though the hierarchy is an abstract construction this theorem provides a very concrete way to look at entropy pairs. For instance, it tells us that UPE systems with the pseudo-orbit tracing property and a fully supported measure have a dense set of asymptotic pairs. This theorem is also the main tool used in this paper to check if certain particular examples have UPE or CPE and to determine what precise class they belong to.

In the case where $(X,T)$ is expansive, we have that $\mathtt{A}(X,T) =
\mathtt{A}^{\varepsilon}(X,T)$ for some positive $\varepsilon$. In particular, Theorem~\ref{pipeteorema} allows us to recover Meyerovitch's result.


Other results that relate asymptotic pairs and positive topological entropy
are those of Pavlov. In~\cite{Pavlov2013} Pavlov characterized $\mathbb{Z}%
^{d}$-SFTs with a fully supported measure having only positive entropy
symbolic factors through a combinatorial condition on the patterns, called
chain exchangability (CE); loosely speaking, a subshift has chain
exchangability if every pattern can be turned into any other pattern over the
same support by constructing a chain of patterns such that any two consequent
patterns on the chain can be obtained by localizing each of the configurations
on an asymptotic pair. Subsequently, in~\cite{Pavlov2017}, he gave sufficient
conditions which imply topological CPE for $\mathbb{Z}^{d}$-SFTs; his
condition being bounded chain exchangeability (BCE). In the same paper, Pavlov
asked whether the condition of having BCE was necessary to have topological
CPE in the context of $\mathbb{Z}^{d}$-subshifts of finite type.

As another consequence of Theorem \ref{pipeteorema}\ we show that any subshift of finite type
satisfying Pavlov's BCE condition (with a fully supported measure) must belong
to the first or second level of our hierarchy of complete entropy. In
particular, this gives a new proof of Pavlov's result and extends it to
countable amenable groups. Furthermore, we answer his question in the negative by constructing an SFT that we denote as the Good Wave Shift.

{ \renewcommand{\thetheorem}{\ref{teoremabonito}} 
	
	\begin{theorem}
		There is a topologically weakly mixing $\mathbb{Z}^{3}$-SFT with topological CPE which does not have BCE.
	\end{theorem}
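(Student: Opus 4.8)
\emph{Proof plan.}
The system will be an explicit $\mathbb{Z}^{3}$-subshift of finite type $\GW$, the \emph{Good Wave Shift}, which I would design so that every configuration is a free choice of symbols on the complement of a sparse family of ``wave fronts'': monotone membranes (equivalently, fronts of perturbation that propagate at bounded speed in one of the coordinates) obeying local transition rules, while a non-trivial \emph{finite} modification of a legal configuration amounts to rerouting, creating, or annihilating a bounded piece of such a wave. The first step is to write down the finite list of forbidden patterns defining $\GW$ and to record three structural facts. First, $\GW$ is a $\mathbb{Z}^{3}$-SFT, hence has the pseudo-orbit tracing property, so Theorem~\ref{pipeteorema} applies. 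Second, $\GW$ carries a fully supported invariant measure $\mu$; a standard argument produces one as an average of periodic measures, using that far-apart admissible patterns in $\GW$ can be glued. Third, $\GW$ is topologically weakly mixing, which I would deduce from an explicit block-gluing statement: arbitrary admissible patterns on far-apart boxes extend to a common configuration by routing the required wave fronts through the gap between them.

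Granting these, topological CPE follows from Theorem~\ref{pipeteorema}. Since $\mu$ is fully supported, part~(2) of that theorem gives $\R(\GW)\setminus\Delta\subseteq\EP(\GW)$, i.e.\ every non-diagonal asymptotic pair is an entropy pair. It therefore suffices to show that the smallest \emph{closed} equivalence relation containing $\R(\GW)$ equals $\GW^{2}$. I would prove this by a surgery argument: given configurations $x,y$, successively apply asymptotic-pair moves — local wave reroutings together with creations and annihilations — that convert ever larger central windows of the current configuration into the corresponding windows of $y$; the intermediate configurations, in which partially re-routed wave fronts run off toward infinity, are precisely the points furnished by the topological closure, and the limit of the process is $y$. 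This yields $\overline{\,\bigcup_{k}\R(\GW)^{\circ k}\,}=\GW^{2}$, hence topological CPE. (Combined with the earlier fact that BCE confines a system to level $1$ or $2$ of the hierarchy, the failure of BCE established next also places $\GW$ at level $\ge 3$.)

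It remains to refute bounded chain exchangeability. For each $n$ I would exhibit admissible patterns $P_{n},Q_{n}$ on a box $S_{n}$ of side $\sim n$ that agree except on a small core and differ there by the routing of a single wave front, so that $P_{n}$ and $Q_{n}$ are the two ``extreme'' configurations of that wave inside $S_{n}$. The decisive input is a locality property of $\GW$: if $(a,b)$ is an asymptotic pair of $\GW$ then $\diff(a,b)=\{v: a_{v}\neq b_{v}\}$ is not only finite but, by the rigidity of the propagation rules, can displace the relevant wave front by only a bounded amount; hence a single chain-exchangeability step alters the wave routing inside $S_{n}$ by a bounded quantity. Consequently any chain of patterns from $P_{n}$ to $Q_{n}$ has length $\gtrsim n$, so no uniform bound independent of the support can exist, and $\GW$ does not have BCE.

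The main obstacle is to engineer $\GW$ so that these two demands coexist: flexible enough that every pair of configurations is joined — in the limit — by asymptotic-pair moves (so that CPE holds), yet rigid enough that each such move advances a wave by only a bounded amount, forcing any macroscopic re-routing to decompose into $\Omega(n)$ steps (so that BCE fails). Getting the combinatorics of the wave rules to deliver the matching lower bound — an honest proof that no clever short chain exists — is the technical heart of the argument; by comparison, verifying weak mixing and the existence of a fully supported measure is routine given the gluing flexibility of $\GW$.
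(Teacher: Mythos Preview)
Your overall plan matches the paper's: construct an explicit $\ZZ^3$-SFT (the Good Wave Shift $\GW$), verify POTP, a fully supported invariant measure, and topological weak mixing, and deduce topological CPE from Theorem~\ref{pipeteorema} by showing that the closed equivalence relation generated by the asymptotic pairs is all of $\GW^2$. That part is exactly what the paper does, via Corollary~\ref{corolario_delaescalerita}.

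The argument for refuting BCE is where you diverge. The paper does \emph{not} estimate chain lengths directly; instead it shows $\R_2(\GW,\sigma)\neq\GW^2$ (Claim~\ref{claim_noclasedos}: the all-empty configuration is not $\R_2$-related to any configuration carrying a wave) and then invokes the general Proposition~\ref{prop_roniewin} that BCE forces asymptotic class $\leq 2$. Your direct quantitative route could in principle be made to work, but your sketch has a problem: you describe $P_n$ and $Q_n$ as patterns that ``agree except on a small core''. If that were literally so, they would be exchangeable in one step, since any two configurations of $\GW$ differing on a bounded set are already asymptotic. What you actually need are patterns whose visible wave levels inside $S_n$ are far apart (e.g.\ no wave versus a wave at level $0$); the Lipschitz invariant is then something like ``minimum $|k|$ over levels $k$ carrying a $\cubito$'', and one must check it moves by $O(1)$ per exchangeability step. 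That check ultimately rests on the same rigidity the paper proves --- every wave is a \emph{good} wave (Claims~\ref{claim.struct_isolation} and~\ref{claim.twoface}) and hence intersects every vertical line at height varying by at most $1$ --- so the two arguments have the same core, but the paper's route through $\R_2$ and Proposition~\ref{prop_roniewin} avoids having to manage the entry and exit of waves through the boundary of $S_n$.

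Finally, you defer the actual construction of $\GW$ to a promise. In the paper the two-layer design (a $\ZZ^2$ Wang-tile layer whose wire colours force the good-wave property on the $\cubito$ layer) and the verification that waves are good are the genuine technical content; the rest follows comparatively easily once those claims are in hand.
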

	
	\addtocounter{theorem}{-1}}

\section{Background}

Let $G$ be a group. We denote by $F \Subset G$ a finite subset of $G$. A sequence $\{F_n\}_{n \in \NN}$ of finite subsets of $G$ is said to be (left) \textbf{asymptotically invariant} or \textbf{F\o lner} if for every $K \Subset G$ we have that $|K \cdot F_n \Delta F_n|/|F_n| \to 0$. A countable group is \textbf{amenable} if it admits a F\o lner sequence. 

From now on, $G$ denotes an arbitrary countable amenable
group. A \textbf{$G$-topological dynamical system ($G$-TDS)} is a pair $(X,T)$ where
$X$ is a compact metric space and $T : G \times X \to X$ is a
left $G$-action on $X$ by homeomorphisms $T(g,x) = T^g(x)$. We say that a system $(Y,S)$ is a \textbf{factor} of $(X,T)$ if there exists a continuous surjective $G$-equivariant map $\phi: X \to Y$. For an open cover $\mathcal{U}$ of $X$, and $F \Subset G$ we denote by $\mathcal{U}^{F}=\bigvee\nolimits_{g\in F}%
T^{g^{-1}}\mathcal{U}$ the refinement of $\mathcal{U}$ by $F$. We also denote the minimum cardinality of a subcover of $\mathcal{U}$ by
$N(\mathcal{U)}$.

\subsection{Entropy pairs and the CPE class}

\begin{definition}
Let $(X,T)$ be a $G$-TDS, $\mathcal{U}$ an open cover and $\left\{
F_{n}\right\}_{n \in \NN}$ a F\o lner sequence. We define the \textbf{topological entropy of
$(X,T)$ with respect to $\mathcal{U}$} as%
\[
h_{\text{top}}(T,\mathcal{U})=\lim_{n\rightarrow\infty}\frac{1}{\left\vert
F_{n}\right\vert }\log N(\mathcal{U}^{F_{n}}).
\]

\end{definition}

Note that this limit does not depend on the choice of F\o lner sequence, see for instance Theorem 4.38 in~\cite{KerrLiBook2016}. The \textbf{topological entropy} of $(X,T)$ is defined as \[
h_{\text{top}}(T)=\sup_{\mathcal{U}}h_{\text{top}}(T,\mathcal{U}).
\]

\begin{definition}
We say a $G$-TDS $(X,T)$ has \textbf{uniform positive entropy
(UPE)} if for each standard (two non-dense open sets) cover $\mathcal{U}$ we have that
$h_{\text{top}}(T,\mathcal{U})>0.$ We say it has\textbf{ topological
completely positive entropy (topological CPE)} if each non-trivial factor has
positive topological entropy.\end{definition}

In general we have that every system with UPE has topological CPE but the converse is not true even if the system is minimal \cite{song2009minimal}.

\begin{definition}
Let $(X,T)$ be a $G$-TDS. A pair $(x,y)\in X^{2}$ is an \textbf{entropy pair} if $x\neq
y$ and for every pair of disjoint closed neighborhoods $U_{x},U_{y}$ of $x$ and $y$
respectively we have that
\[
h_{\text{top}}(T,\left\{  U_{x}^{\complement},U_{y}^{\complement}\right\}  )>0.
\]
The set of entropy pairs is denoted by $\mathtt{E}(X,T).$
\end{definition}

\begin{theorem}\label{teoremadelblansharr}
[\cite{Blanchard1993}\cite{KerrLi2007}] Let $(X,T)$ be a $G$-TDS and $\Delta = \{(x,x) \mid x \in X\}$ the diagonal of $X^2$.
\begin{enumerate}
	\item There exists an entropy pair if and only if $h_{\text{top}}(T)>0$.
	\item $\mathtt{E}(X,T) \cup \Delta=X^{2}$ if and only if $(X,T)$ has UPE.
	\item The smallest closed equivalence relation that contains the entropy
	pairs is $X^{2}$ if and only if $(X,T)$ has topological CPE.
\end{enumerate}
\end{theorem}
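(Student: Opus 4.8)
The plan is to deduce all three equivalences from the independence-entropy (IE) formalism of Kerr and Li, which is the version of local entropy theory available for an arbitrary countable amenable $G$. For a tuple $\mathbf{A}=(A_1,\dots,A_k)$ of subsets of $X$, call $J\subseteq G$ an \emph{independence set} for $\mathbf{A}$ if $\bigcap_{g\in F}T^{g^{-1}}A_{s(g)}\neq\emptyset$ for every $F\Subset J$ and every $s\colon F\to\{1,\dots,k\}$, and let $\overline{I}(\mathbf{A})$ be the normalized asymptotic maximum of $|J\cap F_n|$ over independence sets $J$ (the limit exists, independently of the F\o lner sequence, by the Ornstein--Weiss subadditivity lemma, and is monotone in each coordinate). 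A tuple $(x_1,\dots,x_k)\in X^k$ is an \emph{IE-tuple} if $\overline{I}(U_1,\dots,U_k)>0$ for every product neighbourhood $U_1\times\cdots\times U_k$ of it; write $\IE_k(X,T)$ for the set of IE-tuples. I will take the following facts from \cite{KerrLi2007} as given: (i) $\IE_k(X,T)$ is closed and $T^{\times k}$-invariant, every coordinate sub-projection of an IE-tuple is an IE-tuple, and pulling subsets back along a factor map $\phi$ does not decrease their independence density (since $\bigcap_g T^{g^{-1}}\phi^{-1}A_{s(g)}=\phi^{-1}\bigl(\bigcap_g S^{g^{-1}}A_{s(g)}\bigr)$); (ii) if $A_1,\dots,A_k$ are closed with $\overline{I}(A_1,\dots,A_k)>0$, then $A_1\times\cdots\times A_k$ contains an IE-tuple; and (iii) for a finite open cover $\mathcal{U}=\{U_1,\dots,U_k\}$, $h_{\text{top}}(T,\mathcal{U})>0$ if and only if $\overline{I}(U_1^{\complement},\dots,U_k^{\complement})>0$. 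The genuinely hard fact is (ii): its proof is a combinatorial pigeonhole (Sauer--Shelah type) argument carried out at every scale and then passed to a limit by compactness, and amenability enters exactly through the Ornstein--Weiss machinery that makes $\overline{I}$ well behaved.

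First I would record the identification $\EP(X,T)=\IE_2(X,T)\setminus\Delta$, which follows from (iii) and the fact that $X$ is metric: if $(x,y)\in\IE_2$ with $x\neq y$ and $U_x,U_y$ are disjoint closed neighbourhoods of $x,y$, then $\mathrm{int}(U_x)\times\mathrm{int}(U_y)$ is a product neighbourhood of $(x,y)$, so $\overline{I}(U_x,U_y)\geq\overline{I}(\mathrm{int}(U_x),\mathrm{int}(U_y))>0$, whence $h_{\text{top}}(T,\{U_x^{\complement},U_y^{\complement}\})>0$ by (iii); the reverse implication chooses, inside a given product neighbourhood of an entropy pair, disjoint closed neighbourhoods (possible as $x\neq y$) and runs the computation backwards. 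Part (1) follows: if $h_{\text{top}}(T)>0$, pick a finite open cover $\mathcal{U}=\{U_1,\dots,U_k\}$ with $k\geq2$ and $h_{\text{top}}(T,\mathcal{U})>0$; by (iii) and (ii) the closed tuple $(U_1^{\complement},\dots,U_k^{\complement})$ contains an IE-tuple $(x_1,\dots,x_k)$, the $x_i$ cannot all coincide since the $U_i$ cover $X$, and a sub-projection onto a pair $i\neq j$ with $x_i\neq x_j$ gives an entropy pair; conversely an entropy pair witnesses $h_{\text{top}}(T)>0$ through its defining two-set cover. Part (2): if $\EP\cup\Delta=X^2$, i.e.\ $\IE_2=X^2$, then for a standard cover $\{U_1,U_2\}$ pick any $x\in\mathrm{int}(U_1^{\complement})$ and $y\in\mathrm{int}(U_2^{\complement})$ (both non-empty; they may coincide, which is harmless since $\Delta\subseteq\IE_2$) and deduce $\overline{I}(U_1^{\complement},U_2^{\complement})>0$, hence $h_{\text{top}}(T,\{U_1,U_2\})>0$ by (iii); the converse is immediate, since for $x\neq y$ any two disjoint closed neighbourhoods form a standard cover to which UPE applies.

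Part (3) is the substantial one. Let $R$ be the smallest closed equivalence relation containing $\EP(X,T)$; since $\EP\cup\Delta$ is closed and $T^{\times2}$-invariant (as $T^g$ is a homeomorphism and $h_{\text{top}}(T,\cdot)$ is a conjugacy invariant) and topological closure, symmetrization and transitive closure each preserve $T^{\times2}$-invariance, $R$ is a closed $T^{\times2}$-invariant equivalence relation. For the ``if'' direction, assume $R=X^2$ and let $\phi\colon(X,T)\to(Y,S)$ be a non-trivial factor; the fibre relation $R_\phi=\{(x,x')\mid\phi(x)=\phi(x')\}$ is a closed $T^{\times2}$-invariant equivalence relation, proper because $|Y|>1$, so by minimality of $R$ we cannot have $\EP(X,T)\subseteq R_\phi$, and we may fix an entropy pair $(x,y)$ with $\phi(x)\neq\phi(y)$. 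For disjoint closed neighbourhoods $U,W$ of $\phi(x),\phi(y)$ in $Y$, the sets $\phi^{-1}(U),\phi^{-1}(W)$ are disjoint closed neighbourhoods of $x,y$ and $h_{\text{top}}(S,\{U^{\complement},W^{\complement}\})\geq h_{\text{top}}(T,\{\phi^{-1}(U)^{\complement},\phi^{-1}(W)^{\complement}\})>0$ (pulling a subcover back along $\phi$ does not increase cardinality), so $(\phi(x),\phi(y))\in\EP(Y,S)$ and $h_{\text{top}}(S)>0$ by part (1); thus $(X,T)$ has topological CPE. For the ``only if'' direction, assume topological CPE and suppose $R\neq X^2$. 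Then $Y\isdef X/R$ is a compact metrizable space (the quotient of a compact metric space by a closed equivalence relation is metrizable, e.g.\ because $C(X/R)$ is a closed, hence separable, subalgebra of $C(X)$), $T$ descends to an action $S$ on $Y$ by the universal property of the quotient, and $\phi\colon X\to Y$ is a non-trivial factor map; by CPE $h_{\text{top}}(S)>0$, so by part (1) there is an entropy pair $(\bar{x},\bar{y})$ of $(Y,S)$. Finally I lift it: taking product neighbourhoods $B_1^{(n)}\ni\bar{x}$, $B_2^{(n)}\ni\bar{y}$ of diameter at most $1/n$, one has by (i) that $\overline{I}_X\bigl(\phi^{-1}(\overline{B_1^{(n)}}),\phi^{-1}(\overline{B_2^{(n)}})\bigr)\geq\overline{I}_Y(B_1^{(n)},B_2^{(n)})>0$, so (ii) gives an IE-pair $(x_n,y_n)$ of $(X,T)$ with $\phi(x_n)\in\overline{B_1^{(n)}}$ and $\phi(y_n)\in\overline{B_2^{(n)}}$; a convergent subsequence $(x_n,y_n)\to(x,y)$ then has $\phi(x)=\bar{x}$, $\phi(y)=\bar{y}$ (the diameters shrink) and $(x,y)\in\IE_2(X,T)$ by closedness, with $x\neq y$ as $\bar{x}\neq\bar{y}$, so $(x,y)\in\EP(X,T)\subseteq R$ forces $\phi(x)=\phi(y)$, i.e.\ $\bar{x}=\bar{y}$ --- a contradiction.

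The main obstacle is fact (ii); it is the combinatorial heart of the theory and is what drives the non-trivial implications of parts (1) and, especially, (3). The second most delicate point is the lifting step in (3): one must be careful to produce an IE-pair lying \emph{exactly} over $(\bar{x},\bar{y})$ (hence the shrinking-neighbourhood-plus-closedness argument rather than a single application of (ii)) and to verify that $X/R$ really is a compact metric $G$-TDS, so that the hypothesis ``every non-trivial factor has positive entropy'' applies to it.
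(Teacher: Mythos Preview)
The paper does not actually prove this theorem; it is stated with a citation to \cite{Blanchard1993} and \cite{KerrLi2007} and used as a black box. Your proposal is a faithful reconstruction of the Kerr--Li approach and is essentially correct.

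One small correction: in part (2) you write ``they may coincide, which is harmless since $\Delta\subseteq\IE_2$''. Neither clause is right. First, $x$ and $y$ cannot coincide: since $\{U_1,U_2\}$ is a cover, $U_1^{\complement}\cap U_2^{\complement}=\emptyset$, so any $x\in U_1^{\complement}$ and $y\in U_2^{\complement}$ are distinct. Second, the inclusion $\Delta\subseteq\IE_2$ is false in general; as the paper itself records in Theorem~\ref{teorema_ie_es_ep}, one only has $\IE(X,T)=\EP(X,T)\cup\Delta_{\mathcal{S}}$, where $\Delta_{\mathcal{S}}$ is the diagonal of points lying in the support of some invariant measure. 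Fortunately your argument never actually uses the erroneous parenthetical, since $x\neq y$ is automatic and then $(x,y)\in\EP(X,T)$ directly gives positive independence density for the pair of interiors. The rest of the proof---in particular the quotient construction and the shrinking-neighbourhood lifting in part (3)---is sound.
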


\bigskip Given a subset $R\subset X^{2}$ we denote with $R^{+}$ the smallest
equivalence relation that contains $R$. Considering that the union of the set of entropy pairs and the diagonal is closed but not an equivalence
relation, we define the following sets inductively (transfinitely). Let
$\alpha$ be an ordinal. We define%
\[
\mathtt{E}_{1}(X,T):=\mathtt{E}(X,T)\cup\Delta\text{, and}%
\]
\begin{align*}
\mbox{ If $\alpha$ is a successor } &  \ \ \ \mathtt{E}_{\alpha}(X,T):=%
\begin{cases}
\overline{\mathtt{E}_{\alpha-1}(X,T)} & \mbox{ if }\mathtt{E}_{\alpha
-1}(X,T)\mbox{ is not closed,}\\
\mathtt{E}_{\alpha-1}(X,T)^{+} & \mbox{ otherwise.}
\end{cases}
\\
\mbox{ If $\alpha$ is a limit } &  \ \ \ \mathtt{E}_{\alpha}(X,T):=\bigcup
_{\beta<\alpha}\mathtt{E}_{\beta}(X,T)
\end{align*}

\bigskip The following definition introduces a hierarchy of systems that lie between UPE and
topological CPE.

\begin{definition}
Let $\alpha$ be an ordinal. We say that a dynamical system $(X,T)$ is in the
\textbf{CPE class $\alpha$} if $E_{\alpha}(X,T)=X^{2}$ and for
every $\beta<\alpha$ we have $E_{\beta}(X,T)\neq X^{2}$.
\end{definition}

Note that $(X,T)$ is in the CPE class $1$ if and only if $(X,T)$ has UPE. 

\begin{proposition}\label{prop_onichan}
	A $G$-TDS $(X,T)$ is in the CPE class $\alpha$ for some ordinal $\alpha$ if and only if it has topological CPE.
\end{proposition}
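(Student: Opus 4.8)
The plan is to compare the transfinite sequence $\mathtt{E}_\alpha(X,T)$ directly with the smallest closed equivalence relation $\mathcal{R}$ containing $\mathtt{E}(X,T)$, and then invoke Theorem~\ref{teoremadelblansharr}(3) to pass between the statements ``$\mathcal{R}=X^2$'' and ``$(X,T)$ has topological CPE''. Two elementary facts will be used repeatedly: for a subset $R\subseteq X^2$, the set $R^{+}$ is by definition the \emph{smallest} equivalence relation containing $R$, so any equivalence relation containing $R$ contains $R^{+}$; and any closed set containing $R$ contains $\overline{R}$.

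First I would record, by transfinite induction, that the sequence is non-decreasing, i.e.\ $\mathtt{E}_\beta(X,T)\subseteq\mathtt{E}_\alpha(X,T)$ whenever $\beta\le\alpha$: a successor step replaces a set by a superset (its closure, or the equivalence relation it generates), and a limit step is a union. Then I would prove, again by transfinite induction on $\alpha$, that every closed equivalence relation $\mathcal{S}$ with $\mathtt{E}(X,T)\subseteq\mathcal{S}$ satisfies $\mathtt{E}_\alpha(X,T)\subseteq\mathcal{S}$ for all $\alpha$. For $\alpha=1$ this holds because $\mathcal{S}$ contains $\mathtt{E}(X,T)$ and, being reflexive, contains $\Delta$. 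At a successor step the hypothesis $\mathtt{E}_{\alpha-1}(X,T)\subseteq\mathcal{S}$ gives $\overline{\mathtt{E}_{\alpha-1}(X,T)}\subseteq\overline{\mathcal{S}}=\mathcal{S}$ (as $\mathcal{S}$ is closed) and $\mathtt{E}_{\alpha-1}(X,T)^{+}\subseteq\mathcal{S}$ (as $\mathcal{S}$ is an equivalence relation containing $\mathtt{E}_{\alpha-1}(X,T)$), so both clauses of the definition are covered; a limit step is a union of sets already inside $\mathcal{S}$.

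For the forward implication, suppose $(X,T)$ is in the CPE class $\alpha$, so $\mathtt{E}_\alpha(X,T)=X^2$. Applying the second induction with $\mathcal{S}=\mathcal{R}$ yields $X^2=\mathtt{E}_\alpha(X,T)\subseteq\mathcal{R}\subseteq X^2$, hence $\mathcal{R}=X^2$, and Theorem~\ref{teoremadelblansharr}(3) gives topological CPE. For the converse, assume $(X,T)$ has topological CPE, so $\mathcal{R}=X^2$. The chain $\bigl(\mathtt{E}_\alpha(X,T)\bigr)_{\alpha\ge 1}$ is a non-decreasing family of subsets of the fixed set $X^2$; since there are only set-many distinct subsets of $X^2$ it must repeat a value, and by monotonicity the repetition propagates, so there is an ordinal $\alpha\ge 1$ with $\mathtt{E}_{\alpha+1}(X,T)=\mathtt{E}_\alpha(X,T)$. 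Inspecting the successor clause for $\mathtt{E}_{\alpha+1}(X,T)$, this equality forces $\mathtt{E}_\alpha(X,T)$ to be closed (otherwise $\mathtt{E}_{\alpha+1}(X,T)=\overline{\mathtt{E}_\alpha(X,T)}$ would be strictly larger) and therefore, $\mathtt{E}_{\alpha+1}(X,T)=\mathtt{E}_\alpha(X,T)^{+}$, to be an equivalence relation. Since $\mathtt{E}_\alpha(X,T)\supseteq\mathtt{E}_1(X,T)\supseteq\mathtt{E}(X,T)$, minimality of $\mathcal{R}$ gives $X^2=\mathcal{R}\subseteq\mathtt{E}_\alpha(X,T)$, so $\mathtt{E}_\alpha(X,T)=X^2$. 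Taking $\alpha_0$ minimal with $\mathtt{E}_{\alpha_0}(X,T)=X^2$ shows $(X,T)$ is in the CPE class $\alpha_0$.

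The only genuinely delicate point I anticipate is the stabilisation step in the converse: one must justify that the ordinal-indexed chain cannot grow strictly forever, which is the standard fact that a monotone chain of subsets of a fixed set stabilises, combined with the monotonicity established at the outset so that a single repeated value forces $\mathtt{E}_{\alpha+1}(X,T)=\mathtt{E}_\alpha(X,T)$. Everything else reduces to the two routine transfinite inductions above.
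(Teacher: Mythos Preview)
Your proof is correct and follows essentially the same outline as the paper: show each $\mathtt{E}_\alpha(X,T)$ sits inside any closed equivalence relation containing $\mathtt{E}(X,T)$, and for the converse argue that the chain stabilises at a closed equivalence relation. The one substantive difference is in the stabilisation step: the paper appeals to separability of $X^2$ (citing Akin) to conclude that the increasing chain of closed sets $\{\overline{\mathtt{E}_\alpha(X,T)}\}_\alpha$ stabilises at a \emph{countable} ordinal, whereas you use the cruder cardinality bound on subsets of $X^2$ to get stabilisation at \emph{some} ordinal. Your version suffices for the proposition as stated, but the countability obtained in the paper is not incidental---it is what makes Theorem~\ref{teo_onichan} (every countable ordinal is realised as a CPE class) a sharp description of the hierarchy.
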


\begin{proof}
	If $(X,T)$ is in the CPE class $\alpha$ then the smallest closed equivalence relation containing $E(X,T)$ is $X^2$ therefore by Theorem~\ref{teoremadelblansharr} it has topological CPE. Conversely, by the same result, the smallest closed equivalence relation containing $E(X,T)$ is $X^2$. As the increasing chain $\{\overline{E_{\alpha}(X,T)}\}_{\alpha}$ of closed sets is contained in $X^2$ which is separable, it must stabilize at a countable ordinal. See for instance Chapter 1, exercise 18 of~\cite{akin2010general}.
\end{proof}

As far as the authors are aware, every example in the literature of a $G$-TDS which has topological CPE but not UPE is either in the CPE class $2$ or $3$. For instance, Blanchard's example from~\cite{blanchard1992fully} $X = \{a,b\}^{\ZZ} \cup \{a,c\}^{\ZZ}$ with the shift action clearly is in the CPE class $2$. And Song and Ye's minimal example from~\cite{song2009minimal} is diagonal, that is, $(x,Tx) \in E(X,T)$ for every $x \in X$. Therefore it satisfies that $\overline{E(X,T)^{+}} = X^2$, meaning that it cannot belong to any class above $3$. We will later prove that for every countable ordinal the corresponding CPE class is non-empty. Furthermore, we will also show the existence of subshifts of finite type in the first three classes.

\subsection{Asymptotic pairs and the asymptotic class}

Here we define a similar class but based on asymptotic pairs. The main motivation for introducing this class is that under some conditions given in Section~\ref{section_teofel}, the asymptotic class and the CPE class coincide. Nonetheless, since asymptotic pairs on dynamical systems are natural objects which can be used to study chaotic behaviour (see for references~\cite{BLANCHARD2002,Downarowicz2011,Huang2015}), we believe the asymptotic class is interesting on its own.

\begin{definition}
\label{potp}\bigskip Let $(X,T)$ a $G$-TDS. We say $(x,y)$ is an \textbf{$\varepsilon
$-asymptotic pair} if there exists $F \Subset G$ such that for $g \notin F$ $d(T^{g}x,T^{g}y)\leq\varepsilon$. Furthermore, we say $(x,y)$ is an \textbf{asymptotic pair} if it is $\varepsilon$-asymptotic for
every $\varepsilon>0$.
\end{definition}

We denote the $\varepsilon$-asymptotic pairs with $\R^{\varepsilon}(X,T)$ and
the asymptotic pairs with $\R(X,T).$ The asymptotic pairs form an equivalence
relation that in general is not closed. Let $(X,T)$ be a $G$-TDS. For an ordinal
$\alpha$ we define the following increasing set of asymptotic relations.
\[
\R_{0}(X,T):=\R(X,T)\text{, and}%
\]
\begin{align*}
\mbox{ If $\alpha$ is a successor }  &  \ \ \ \R_{\alpha}(X,T) :=
\begin{cases}
\overline{\R_{\alpha-1}(X,T)} & \mbox{ if } \R_{\alpha-1}(X,T)
\mbox{ is not closed,}\\
\R_{\alpha-1}(X,T)^{+} & \mbox{ otherwise.}\\
\end{cases}
\\
\mbox{ If $\alpha$ is a limit }  &  \ \ \ \R_{\alpha}(X,T) := \bigcup_{\beta<
\alpha}\R_{\beta}(X,T)
\end{align*}


For an ordinal $\alpha$ we say that a $G$-TDS $(X,T)$ \textbf{is in the asymptotic class
$\alpha$} if $\R_{\alpha}(X,T)=X^{2}$ and for every $\beta<\alpha$ we have
$\R_{\beta}(X,T)\neq X^{2}$. We remark that every system for which the smallest closed equivalence relation containing $\R(X,T)$ is $X^2$ must satisfy that $\R_{\alpha}(X,T) = X^2$ for some countable ordinal $\alpha$ by analogous reasons as those given in Proposition~\ref{prop_onichan}.

\subsection{Independence}

The notion of independence can be used to characterize entropy pairs. A suggested reference for this topic is Chapter 12 of Kerr and Li's book~\cite{KerrLiBook2016}.

\begin{definition}
We say $J\subset G$ has \textbf{positive density $D(J)$ with respect to a F\o lner
sequence} $\left\{  F_{n}\right\}_{n \in \NN}$ if the following limit exists and is positive
\[D(J) := 
\lim_{n\rightarrow\infty}\frac{\left\vert F_{n}\cap J\right\vert }{\left\vert
F_{n}\right\vert }>0.
\]

\end{definition}

\begin{definition}
Let $(X,T)$ be a $G$-TDS. Let $\mathbf{A}=(A_{1},\dots,A_{n})$ be a tuple of
subsets of $X.$ We say $J\subset G$ is an \textbf{independence set}\textit{
for} $\mathbf{A}$ if for every nonempty $I\Subset J$ and any
$\phi : I \to \{1,\dots,n\}$ we have
\[
\bigcap_{i\in I}T^{i^{-1}}A_{\phi(i)}\neq\emptyset
\]

\end{definition}

\begin{definition}
We say $(x,y)$ is an \textbf{independence entropy pair (IE-pair)} if every
pair of open sets $\mathbf{U} = \{U_{1},U_{2}\}$ with $x_{1}\in U_{1}$ and $x_{2}\in U_{2},$
admits an independence set $J$ with positive density for some F\o lner sequence. We denote the set of independence entropy pairs with $\IE(X,T).$
\end{definition}

\begin{definition}
Let $(X,T)$ be a $G$-TDS and $\mu$ an invariant measure. We say a F\o lner
sequence $\left\{  F_{n}\right\}_{n \in \NN}  $ \textbf{satisfies the pointwise ergodic
theorem (PET)} if for every $f \in L^1(X)$ we have that
\[
\lim_{n\rightarrow\infty}\frac{1}{\left\vert F_{n}\right\vert }\sum_{g\in
F_{n}}f \circ T^{g}x=\mathbb{E}(f)(x). \ \ \ \ \mu\mbox{-a.e.}
\]
Where $\mathbb{E}$ is the conditional expectation of $\mu$ with respect to the subspace of $G$-invariant functions in $L^1(X)$.
\end{definition}

\begin{remark}
	Every countable amenable group admits a F\o lner sequence which satisfies the
	PET, see~\cite{shulman1988maximal,Lindenstrauss2001}.
\end{remark}

\begin{remark}
[Proposition 12.7 \cite{KerrLiBook2016}]Let $(x,y)$ be an IE-pair. For
every F\o lner sequence $\left\{  F_{n}\right\}_{n \in \NN}$ that satisfies the PET there exists an independence
set with positive density with respect to $\left\{  F_{n}\right\}_{n \in \NN}$.
\end{remark}

Let $\mathcal{S}$ be the set of all $x \in X$ for which there exists an invariant measure $\mu$ for which $x \in \operatorname{supp}(\mu)$ and $\Delta _{\mathcal{S}}=\left\{ (x,x)\in X^{2}: x \in \mathcal{S} \right\}$ be the diagonal of $\mathcal{S}$.
\begin{theorem}\label{teorema_ie_es_ep}
[\cite{KerrLi2007} or Theorem 12.20 \cite{KerrLiBook2016}]\label{kl} Let
$(X,T)$ be a $G$-TDS. Then $\IE(X,T)=\EP(X,T)\cup\Delta _{\mathcal{S}}.$
\end{theorem}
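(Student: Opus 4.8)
The plan is to prove the theorem by establishing separately the two set equalities $\IE(X,T)\setminus\Delta=\EP(X,T)$ and $\IE(X,T)\cap\Delta=\Delta_{\mathcal{S}}$; together they give the statement, since entropy pairs satisfy $x\neq y$ (so $\EP(X,T)\cap\Delta=\emptyset$) while $\Delta_{\mathcal{S}}\subseteq\Delta$. Throughout I will use the elementary monotonicity fact that an independence set for a tuple $(A_1,\dots,A_n)$ is also an independence set for $(A_1',\dots,A_n')$ whenever $A_i\subseteq A_i'$ for all $i$; in particular, to verify the IE-pair property at $(x,y)$ it suffices to check the defining condition for $(U_1,U_2)$ ranging over a neighbourhood basis of $x$ and of $y$.

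For the diagonal equality I first show $\Delta_{\mathcal{S}}\subseteq\IE(X,T)$. Suppose $x\in\supp(\mu)$ for an invariant measure $\mu$ and fix a F\o lner sequence $\{F_n\}$ satisfying the PET. Given open $U_1,U_2\ni x$, put $U=U_1\cap U_2$, so $\mu(U)>0$; since $\int\xExp(\indicator{U})\,d\mu=\mu(U)>0$, the pointwise ergodic theorem yields a point $z$ with $|F_n|^{-1}|\{g\in F_n:T^gz\in U\}|\to\xExp(\indicator{U})(z)>0$. Then $J=\{g\in G:T^gz\in U\}$ has positive density along $\{F_n\}$, and as $z\in\bigcap_{i\in I}T^{i^{-1}}U$ for every $I\Subset J$, it is an independence set for $(U,U)$, hence for $(U_1,U_2)$; so $(x,x)\in\IE(X,T)$. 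For the reverse inclusion, if $(x,x)\in\IE(X,T)$, then for each neighbourhood $U$ of $x$ choose an independence set $J_U$ for $(U,U)$ with density $D(J_U)>0$ along some F\o lner sequence $\{F^U_n\}$ and points $z_n\in\bigcap_{i\in J_U\cap F^U_n}T^{i^{-1}}U$, and let $\mu_U$ be a weak-$*$ limit point of the empirical measures $|F^U_n|^{-1}\sum_{g\in F^U_n}\delta_{T^gz_n}$. The F\o lner property makes $\mu_U$ invariant, and the choice of the $z_n$ gives $\mu_U(\overline U)\geq D(J_U)>0$. Taking a nested neighbourhood basis $\{U_k\}$ of $x$ with $\overline{U_k}$ shrinking to $\{x\}$, the measure $\mu=\sum_k2^{-k}\mu_{U_k}$ is invariant and satisfies $x\in\supp(\mu)$, i.e.\ $(x,x)\in\Delta_{\mathcal{S}}$.

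For the off-diagonal equality, the inclusion $\IE(X,T)\setminus\Delta\subseteq\EP(X,T)$ is direct. Let $(x,y)\in\IE(X,T)$ with $x\neq y$ and let $U_x,U_y$ be disjoint closed neighbourhoods of $x$ and $y$; take an independence set $J$ for $(\mathrm{int}\,U_x,\mathrm{int}\,U_y)$ with $D(J)>0$ along a F\o lner sequence $\{F_n\}$. For each $\sigma\colon I\to\{1,2\}$ with $I=J\cap F_n$, a witness point $z_\sigma$ satisfies $T^iz_\sigma\in U_x$ when $\sigma(i)=1$ and $T^iz_\sigma\in U_y$ when $\sigma(i)=2$, so $z_\sigma$ can only lie in those elements $\bigcap_{g\in F_n}T^{g^{-1}}C_{\tau(g)}$ of $\{U_x^{\complement},U_y^{\complement}\}^{F_n}$ (with $C_1=U_x^{\complement}$, $C_2=U_y^{\complement}$) for which $\tau|_I$ is the coordinatewise swap of $\sigma$; hence distinct patterns require distinct elements in any subcover, so $N(\{U_x^{\complement},U_y^{\complement}\}^{F_n})\geq 2^{|J\cap F_n|}$ and $h_{\text{top}}(T,\{U_x^{\complement},U_y^{\complement}\})\geq D(J)\log 2>0$. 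As $U_x,U_y$ were arbitrary, $(x,y)\in\EP(X,T)$.

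The reverse inclusion $\EP(X,T)\subseteq\IE(X,T)$ is the crux, and I expect it to be the main obstacle. Let $(x,y)$ be an entropy pair and fix open $U_1\ni x$, $U_2\ni y$; by monotonicity it is enough to produce a positive-density independence set for $(U_x,U_y)$ for some disjoint closed neighbourhoods $U_x\subseteq U_1$, $U_y\subseteq U_2$ (possible since $x\neq y$ and $X$ is metric). The entropy pair hypothesis gives $h_{\text{top}}(T,\{U_x^{\complement},U_y^{\complement}\})>0$, so the covering numbers $N(\{U_x^{\complement},U_y^{\complement}\}^{F_n})$ grow exponentially in $|F_n|$. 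The heart of the matter is a combinatorial lemma of Sauer--Shelah / Karpovsky--Milman type --- the key technical input of Kerr and Li: this exponential growth forces, for each $n$, a set $J_n\subseteq F_n$ with $|J_n|\geq c|F_n|$ that is an independence set for the \emph{complementary} pair $(U_x,U_y)$. The delicate point is that the plain Sauer--Shelah lemma only yields independence sets for the tuple $(U_x^{\complement},U_y^{\complement})$ formed by the cover elements themselves, and since these two sets overlap, upgrading this to an independence set for $(U_x,U_y)$ requires the more refined counting in Kerr and Li's argument. One then extracts from the $J_n$ a single infinite independence set of positive density along a F\o lner sequence, which again uses amenability in an essential way. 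This gives $(x,y)\in\IE(X,T)$, and since $x\neq y$, $\EP(X,T)\subseteq\IE(X,T)\setminus\Delta$. Combining the four inclusions proves the theorem; the genuinely delicate ingredients are the Karpovsky--Milman-type counting lemma with pattern realization inside the neighbourhoods, and the passage from finitely many finite independence sets to one of positive density.
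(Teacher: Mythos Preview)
The paper does not prove this theorem: it is quoted as a result of Kerr and Li (cited as \cite{KerrLi2007} and Theorem~12.20 of \cite{KerrLiBook2016}) and used as a black box, so there is no ``paper's own proof'' to compare against. What you have written is a correct outline of the Kerr--Li argument itself.

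Your treatment of the diagonal equality $\IE(X,T)\cap\Delta=\Delta_{\mathcal{S}}$ is complete and correct; the measure-building step you use for the inclusion $\IE\cap\Delta\subseteq\Delta_{\mathcal{S}}$ is in fact exactly the construction the present paper employs later, in the proof of Theorem~\ref{pipeteorema}, to show that entropy pairs lie in the support of an invariant measure. Your argument for $\IE(X,T)\setminus\Delta\subseteq\EP(X,T)$ via the lower bound $N(\{U_x^{\complement},U_y^{\complement}\}^{F_n})\geq 2^{|J\cap F_n|}$ is also correct.

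For the hard inclusion $\EP(X,T)\subseteq\IE(X,T)$ you have accurately located the two nontrivial ingredients: the Karpovsky--Milman/Sauer--Shelah type combinatorial lemma in the refined form that produces independence for the pair $(U_x,U_y)$ rather than for the overlapping complements, and the passage from a sequence of finite independence sets of linear size to a single infinite one of positive density. You do not carry these out, but you are right that they are precisely the technical core of Kerr and Li's proof, and no shortcut is known. As a sketch this is faithful to the original; a full proof would require reproducing those two lemmas.
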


\subsection{Pseudo-orbit tracing property}

\begin{definition}
Let $(X,T)$ a $G$-TDS, $\delta>0$ and $S\Subset G$. An $(S,\delta)$
\textbf{pseudo-orbit} is a sequence $(x_{g})_{g\in G}$ such that $d(T^{s}%
x_{g},x_{s\cdot g})<\delta$ for all $s\in S$ and $g\in G.$ We say a
pseudo-orbit is $\mathbf{\varepsilon}$-\textbf{traced} \textbf{by }$x$ if
$d(T^{g}x,x_{g})\leq\varepsilon$ for all $g\in G.$
\end{definition}

\begin{definition}
A $G$-TDS $(X,T)$ has the \textbf{pseudo-orbit tracing property (POTP)} if for
every $\varepsilon>0$ there exists $\delta>0$ and a finite set $S$ such that
any $(S,\delta)$ pseudo-orbit is $\varepsilon$-traced by some point $x\in X.$
\end{definition}

This condition is also known as shadowing. In the case where the space $X$ is zero-dimensional, any expansive $G$-TDS
$(X,T)$ is topologically conjugate to a subshift. In this context, the systems
satisfying the POTP are exactly the subshifts of finite type (SFTs). That is,
they can be seen as sets of coloring of the group $G$ by a finite alphabet
which can be characterized by forbidding the appearance of a finite set of
patterns. This equivalence was first shown by Walters~\cite{Walters1978} in
the context of $\mathbb{Z}$-actions and subsequently generalized to
$\mathbb{Z}^{d}$ by Oprocha~\cite{Oprocha2008} and arbitrary finitely
generated groups by Chung and Lee~\cite{ChungLi2017}. This last result also
covers the arbitrary countable case, as remarked by
Meyerovitch~\cite{Meyerovitch2017}.

Other families of systems satisfying the POTP are axiom A diffeomorphisms~\cite{Bowen1978} and principal algebraic actions of countable amenable groups~\cite{Meyerovitch2017}.

\section{Examples for every CPE class}

\begin{lemma}
	\label{lemadelencaje} Let $\alpha$ be an ordinal and $(X,T)$,$(Y,S)$ two $G$-TDS. If $\phi:X \longrightarrow Y$ is a $G$-equivariant continuous map then
	$\phi(\EP_{\alpha}(X,T))\subset\EP_{\alpha}(Y,S)$.
\end{lemma}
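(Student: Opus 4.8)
The plan is to prove the statement by transfinite induction on $\alpha$, reducing everything to the base case $\alpha = 1$ and the two inductive operations (topological closure and transitive-equivalence-closure) from which the $\EP_\alpha$ hierarchy is built. First I would establish the base case: I need $\phi(\EP_1(X,T)) \subseteq \EP_1(Y,S)$, i.e. that a $G$-equivariant continuous map sends entropy pairs (together with the diagonal) into entropy pairs (together with the diagonal). The diagonal is obviously mapped into the diagonal since $\phi(x,x) = (\phi(x),\phi(x))$, so the content is that if $(x,y) \in \EP(X,T)$ then either $\phi(x) = \phi(y)$ or $(\phi(x),\phi(y)) \in \EP(Y,S)$. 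For this I would invoke the independence characterization of Theorem~\ref{teorema_ie_es_ep}: it suffices to show $\phi(\IE(X,T)) \subseteq \IE(Y,S)$. Given an IE-pair $(x,y)$ with $\phi(x) \neq \phi(y)$, and given open neighborhoods $U_1 \ni \phi(x)$, $U_2 \ni \phi(y)$, the preimages $\phi^{-1}(U_1) \ni x$ and $\phi^{-1}(U_2) \ni y$ are open, so they admit an independence set $J$ with positive density; by $G$-equivariance of $\phi$, the same $J$ is an independence set for $(U_1,U_2)$, since $\bigcap_{i \in I} T^{i^{-1}}\phi^{-1}(U_{\psi(i)}) = \phi^{-1}\big(\bigcap_{i \in I} S^{i^{-1}}U_{\psi(i)}\big)$ is nonempty, forcing the intersection inside to be nonempty. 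Hence $(\phi(x),\phi(y))$ is an IE-pair, and by Theorem~\ref{teorema_ie_es_ep} it lies in $\EP(Y,S) \cup \Delta_{\mathcal S} \subseteq \EP_1(Y,S)$.

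Next I would carry out the inductive step. Assume $\phi(\EP_\beta(X,T)) \subseteq \EP_\beta(Y,S)$ for all $\beta < \alpha$. If $\alpha$ is a limit ordinal, then $\EP_\alpha(X,T) = \bigcup_{\beta<\alpha}\EP_\beta(X,T)$, and continuity of the image of a union gives $\phi(\EP_\alpha(X,T)) = \bigcup_{\beta<\alpha}\phi(\EP_\beta(X,T)) \subseteq \bigcup_{\beta<\alpha}\EP_\beta(Y,S) = \EP_\alpha(Y,S)$. If $\alpha = \beta+1$ is a successor, there are two cases according to the definition. If $\EP_\beta(X,T)$ is not closed, then $\EP_\alpha(X,T) = \overline{\EP_\beta(X,T)}$; since $\phi \times \phi : X^2 \to Y^2$ is continuous and $X^2$ is compact, $\phi\times\phi$ is a closed map, so $\phi(\overline{\EP_\beta(X,T)}) \subseteq \overline{\phi(\EP_\beta(X,T))} \subseteq \overline{\EP_\beta(Y,S)}$. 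Here I must be slightly careful: $\overline{\EP_\beta(Y,S)}$ equals $\EP_{\beta+1}(Y,S)$ only when $\EP_\beta(Y,S)$ is itself not closed; if it happens to be closed, then $\overline{\EP_\beta(Y,S)} = \EP_\beta(Y,S) \subseteq \EP_{\beta+1}(Y,S)$ anyway (the hierarchy is increasing), so in either subcase $\phi(\EP_\alpha(X,T)) \subseteq \EP_\alpha(Y,S)$.

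The remaining subcase is when $\EP_\beta(X,T)$ is closed, so $\EP_\alpha(X,T) = \EP_\beta(X,T)^+$, the smallest equivalence relation containing $\EP_\beta(X,T)$. An element of $\EP_\beta(X,T)^+$ is either on the diagonal, or is connected to an element by a finite chain $(x,z_1),(z_1,z_2),\dots,(z_{k-1},y)$ where each consecutive pair (or its flip) lies in $\EP_\beta(X,T)$. Applying $\phi$ componentwise produces the chain $(\phi(x),\phi(z_1)),\dots,(\phi(z_{k-1}),\phi(y))$, each term of which (or its flip) lies in $\phi(\EP_\beta(X,T)) \subseteq \EP_\beta(Y,S) \subseteq \EP_\beta(Y,S)^+$; since $\EP_\beta(Y,S)^+$ is an equivalence relation it is closed under composing such chains and under flips, so $(\phi(x),\phi(y)) \in \EP_\beta(Y,S)^+$. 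As before, $\EP_\beta(Y,S)^+ = \EP_{\beta+1}(Y,S)$ when $\EP_\beta(Y,S)$ is closed, and otherwise $\EP_\beta(Y,S)^+ \subseteq \overline{\EP_\beta(Y,S)}^+ \subseteq \cdots$; in all cases it is contained in $\EP_\alpha(Y,S)$ because the target hierarchy only grows. This completes the induction. The only genuinely delicate point — and the one I would write out most carefully — is the mismatch between the two cases of the successor clause on the $X$ side versus the $Y$ side: one cannot assume that $\EP_\beta$ is closed (or not) simultaneously for both systems, so the argument must phrase everything as an inclusion into $\EP_\alpha(Y,S)$ using monotonicity of the chain $\{\EP_\gamma(Y,S)\}_\gamma$ rather than an equality. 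Everything else is routine, using only that $\phi$ and $\phi\times\phi$ are continuous, equivariant, and (by compactness) closed.
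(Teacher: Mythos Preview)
Your proof is correct and follows the same transfinite-induction skeleton as the paper's. The one substantive difference is the base case: the paper argues directly from the cover definition of entropy pairs --- given disjoint closed neighborhoods $V_{\phi(x)}, V_{\phi(y)}$ in $Y$, it pulls them back and observes $h_{\mathrm{top}}(S,\{V_{\phi(x)}^c, V_{\phi(y)}^c\}) = h_{\mathrm{top}}(T,\{(\phi^{-1}V_{\phi(x)})^c, (\phi^{-1}V_{\phi(y)})^c\}) > 0$ --- whereas you go through the independence characterization (Theorem~\ref{teorema_ie_es_ep}), pushing forward an independence set along $\phi$. Both routes are valid; yours is cleaner once the IE machinery is available, the paper's is more self-contained.

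For the successor step you are actually more careful than the paper, which just records $\phi(\overline A)\subset\overline B$ and $\phi(A^+)\subset B^+$ and moves on. Your discussion of the possible mismatch (``$\EP_\beta$ closed on the $X$ side but not on the $Y$ side'') is a welcome addition; one small sharpening is that the appeal to ``the target hierarchy only grows'' in the last sub-case is not quite enough as stated --- monotonicity alone does not give $\EP_\beta(Y,S)^+\subset\EP_{\beta+1}(Y,S)$ when $\EP_\beta(Y,S)$ is not closed. The clean fix is to observe that in that sub-case $\EP_\beta(Y,S)$ is necessarily already an equivalence relation (trace the $Y$-hierarchy back one step: it was obtained either as a $(\cdot)^+$ or as a limit union of such), so $\EP_\beta(Y,S)^+=\EP_\beta(Y,S)\subset\overline{\EP_\beta(Y,S)}=\EP_\alpha(Y,S)$.
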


\begin{proof}
	We proceed using transfinite induction. The base step is already
	known, see~\cite{Blanchard1993}. However, we will do the proof for completeness. Let $(x,y)\in\EP_{1}(X,T)$. If $\phi(x)=\phi(y)$ then clearly $(\phi
	(x),\phi(y))\in\EP_{1}(Y,S)$. Assume $\phi(x)\neq\phi(y)$ and let $V_{\phi(x)},V_{\phi(y)}$ be disjoint closed
	neighborhoods of $\phi(x)$ and $\phi(y)$ respectively. Since $(x,y)\in
	\EP_{1}(X,T)$ we have that
	\[
	h_{top}\left(T,\left\{  \left(  \phi^{-1}V_{\phi(x)}\right)  ^{c},\left(  \phi
	^{-1}V_{\phi(x)}\right)  ^{c}\right\}\right) >0.
	\]
	
	By continuity and $G$-equivariance of $\phi$ we have that
	 $$h_{top}\left(S,\left\{  \left(  V_{\phi(x)}\right)
	^{c},\left(  V_{\phi(x)}\right)  ^{c}\right\}\right)  =h_{top}\left(T,\left\{  \left(  \phi^{-1}V_{\phi(x)}\right)  ^{c},\left(  \phi
	^{-1}V_{\phi(x)}\right)  ^{c}\right\}\right).$$
	Therefore $(\phi(x),\phi(y)) \in \EP_{1}(Y,S)$. Now let $\alpha$ be an ordinal and assume that $\phi(\EP_{\beta}(X,T))\subset\EP_{\beta}(Y,S)$
	for every $\beta<\alpha$.
	
	It is not hard to see that if $\phi(A)\subset B$ then $\phi(\overline
	{A})\subset \overline{B}$ and $\phi(A^{+}) \subset B^{+}$. This proves the result for any successor ordinal $\alpha$. In the case of a limit ordinal we have that $\phi(A_{\beta})\subset B_{\beta}$ and therefore $\phi(A_{\alpha}) = \phi(\cup_{\beta < \alpha} A_{\beta})\subset \cup_{\beta < \alpha} B_{\beta} = B_{\alpha}.$ Hence the result also holds for $\alpha$ limit.
\end{proof}	

\begin{remark}\label{remark_delencaje}
	If we consider the asymptotic class in Lemma~\ref{lemadelencaje} instead of the CPE class the result still holds. The image under a $G$-equivariant continuous map of an asymptotic pair is clearly asymptotic, and the rest of the argument is the same.
\end{remark}

\bigskip

We will define a family of dynamical systems $(X_{\alpha},T_{\alpha})$ for every countable ordinal $\alpha$ and a pair of fixed points
$x_{\alpha},y_{\alpha}$ whenever $\alpha$ is a successor. Let $X_{1}:=\{0,1\}^{G}$, $T_{1}:=\sigma: G \times X\longrightarrow X$ denote
the left shift $G$-action on $X$ where for every $g \in G$ we have $\sigma^g(x)_h = x_{g^{-1}h}$ for every $h \in G$, and $x_{1}=0^{G}$, $y_{1}=1^{G}$ be fixed points for $\sigma$. Let $I:=\{\frac{1}{n}~:~n > 0\}\cup\{0\}$ with the Euclidean topology. 


\begin{enumerate}
	\item If $\alpha$ is an even successor ordinal we define
	\[
	X_{\alpha}:=X_{\alpha-1}\times\left\{1,2,3 \right\}  \diagup\left[
	(y_{\alpha-1},1) \sim (y_{\alpha-1},2) \wedge (x_{\alpha-1},2) \sim (x_{\alpha-1},3) \right]  ,
	\]
	$$x_{\alpha}  :=(x_{\alpha-1},1),\text{ }y_{\alpha} :=(y_{\alpha-1},3)$$
	with $\tau_{\alpha}$ the product topology and $T_{\alpha}^{g}:X_{\alpha}\longrightarrow X_{\alpha}$ the map
	\[
	T_{\alpha}^{g}((x,i)):=(T_{\alpha-1}^{g}(x),i)\text{ } \mbox{ for every } x\in X_{\alpha-1}\mbox{
		and }i \in \{1,2,3\}.
	\]
	
	\item If $\alpha$ is an odd successor ordinal and $\alpha-1$ is not a limit
	ordinal we define
	\[
	X_{\alpha}:=X_{\alpha-1}\times\left\{  I\right\}  \diagup\left[ \begin{split}
	(y_{\alpha-1},\tfrac{1}{n})\sim(y_{\alpha-1},\tfrac{1}{n+1}), & ~\forall n \mbox{ odd}\\
	(x_{\alpha-1},\tfrac{1}{n})\sim(x_{\alpha-1},\tfrac{1}{n+1}), & ~\forall n > 0 \mbox{ even}
	\end{split}\right],
	\]
	$$
	x_{\alpha} :=(x_{\alpha-1},1),\text{ }y_{\alpha} :=(y_{\alpha-1},0),$$
	with $\tau_{\alpha}$ the product topology and $T_{\alpha}^{g}:X_{\alpha}\longrightarrow X_{\alpha}$ the map
\[
T_{\alpha}^{g}((x,i)):=(T_{\alpha-1}^{g}(x),i)\text{ } \mbox{ for every } x\in X_{\alpha-1}\mbox{
	and }i \in I.
\]
	\item If $\alpha$ is a countable limit ordinal, fix an increasing sequence of successor ordinals $\left\{
	\alpha(n)\right\}_{n \in \NN}$ such that $\alpha(n)\longrightarrow\alpha$ and define
	\[
	X_{\alpha}:=\bigsqcup_{n \in \NN}X_{\alpha(n)}\diagup\left[  y_{\alpha
		(n)}\sim y_{\alpha(n+1)},~\forall n\in \NN\right].
	\]
	Here $\bigsqcup$ stands for disjoint union. We consider the topology $\tau_{\alpha} = \tau^1_{\alpha} \cup \tau^2_{\alpha}$ where
	\begin{align*}
	\tau^1_{\alpha} := &  \{U~: \mbox{ there is } n \in \NN \mbox{ such that } U\in\tau_{\alpha(n)} \mbox{ and } y_{\alpha(n)} \notin U\}\\
	\tau^2_{\alpha} := &   \{V~: \mbox{ there are } n,m \in \NN \mbox{ such that } V=(X_{\alpha}\setminus\bigsqcup_{k< n}X_{\alpha(k)})\cup U \mbox{ and }y_{\alpha(m)}\in U\in\tau_{\alpha(m)}\}.
	\end{align*}
	Here $T_{\alpha}^{g}(x) = T_{\alpha(n)}^{g}(x)$ if $x \in X_{\alpha(n)}$.

	\item If $\alpha-1$ is a limit ordinal, let $\left\{
	\alpha(n)\right\}_{n \in \NN}$ be the sequence used to define $X_{\alpha-1}$. We define
	\[
	X_{\alpha}:=X_{\alpha-1}\times\left\{  I\right\}  \diagup\left[
	(x_{\alpha(n)},\tfrac{1}{n})\sim(x_{\alpha(n)},\tfrac{1}{n+1}),~\forall n>0\right]  ,
	\]
	$$
	x_{\alpha} := (x_{\alpha(0)},1),\text{ }y_{\alpha} := (x_{\alpha(0)},0)$$
	with $\tau_{\alpha}$ the product topology and $T_{\alpha}^{g}:X_{\alpha}\longrightarrow X_{\alpha}$ the map
	\[
	T_{\alpha}^{g}((x,i)):=(T_{\alpha-1}^{g}(x),i)\text{ } \mbox{ for every } x\in X_{\alpha-1}\mbox{
		and }i \in I.
	\]
	
\end{enumerate}

One can check that on every step $X_{\alpha}$ is compact, Hausdorff and second countable
with the topology $\tau_{\alpha},$ and hence metrizable. Also it is not hard
to see that $T_{\alpha}$ is a continuous left $G$-action. We schematize the four cases of the construction in Figure~\ref{figure_lachupadelmate}

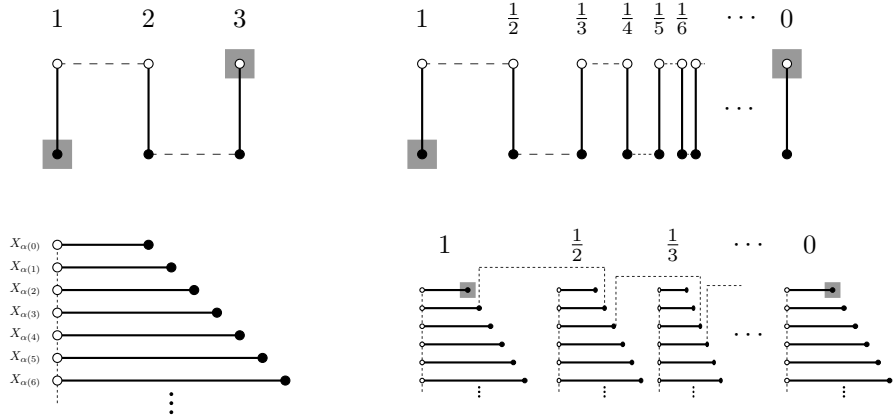
\begin{figure}[h!]
	\centering
	\begin{tikzpicture}[scale=0.6]

\begin{scope}[shift = {(0,0)}]
\node at (0,3) {${\tiny 1}$};
\node at (2,3) {${\tiny 2}$};
\node at (4,3) {${\tiny 3}$};
\draw[thick] (0,0) -- (0,2);
\draw[thick] (2,0) -- (2,2);
\draw[thick] (4,0) -- (4,2);
\draw[dashed, opacity = 0.8] (0,2) -- (2,2);
\draw[dashed, opacity = 0.8] (2,0) -- (4,0);
\draw[thick, fill = black, opacity = 0.4] (-0.3,-0.3) rectangle (0.3,0.3);
\draw[thick, fill = black, opacity = 0.4] (4-0.3,2-0.3) rectangle (4+0.3,2+0.3);
\draw[fill = black] (0,0) circle (0.1);
\draw[fill = white] (0,2) circle (0.1);
\draw[fill = black] (2,0) circle (0.1);
\draw[fill = white] (2,2) circle (0.1);
\draw[fill = black] (4,0) circle (0.1);
\draw[fill = white] (4,2) circle (0.1);
\end{scope}

\begin{scope}[shift = {(8,0)}]
\node at (0,3) {${\tiny 1}$};
\node at (2,3) {${\tiny \tfrac{1}{2}}$};
\node at (3.5,3) {${\tiny \tfrac{1}{3}}$};
\node at (4.5,3) {${\tiny \tfrac{1}{4}}$};
\node at (5.2,3) {${\tiny \tfrac{1}{5}}$};
\node at (5.7,3) {${\tiny \tfrac{1}{6}}$};
\node at (7,3) {${\tiny \cdots}$};
\node at (8,3) {${\tiny 0}$};
\draw[thick] (0,0) -- (0,2);
\draw[thick] (2,0) -- (2,2);
\draw[thick] (3.5,0) -- (3.5,2);
\draw[thick] (4.5,0) -- (4.5,2);
\draw[thick] (5.2,0) -- (5.2,2);
\draw[thick] (5.7,0) -- (5.7,2);
\draw[thick] (6,0) -- (6,2);
\draw[thick] (8,0) -- (8,2);
\node at (7,1) {$\cdots$};
\draw[dashed, opacity = 0.8] (0,2) -- (2,2);
\draw[dashed, opacity = 0.8] (2,0) -- (3.5,0);
\draw[dash pattern=on 2pt off 2pt, opacity = 0.8] (3.5,2) -- (4.5,2);
\draw[dash pattern=on 1pt off 1pt, opacity = 0.8] (4.5,0) -- (5.2,0);
\draw[dash pattern=on 1pt off 1pt, opacity = 0.8] (5.2,2) -- (5.7,2);
\draw[dash pattern=on 0.5pt off 0.5pt, opacity = 0.8] (5.7,0) -- (6,0);
\draw[dash pattern=on 0.5pt off 0.5pt, opacity = 0.8] (6,2) -- (6.2,2);
\draw[thick, fill = black, opacity = 0.4] (-0.3,-0.3) rectangle (0.3,0.3);
\draw[thick, fill = black, opacity = 0.4] (8-0.3,2-0.3) rectangle (8+0.3,2+0.3);
\draw[fill = black] (0,0) circle (0.1);
\draw[fill = white] (0,2) circle (0.1);
\draw[fill = black] (2,0) circle (0.1);
\draw[fill = white] (2,2) circle (0.1);
\draw[fill = black] (3.5,0) circle (0.1);
\draw[fill = white] (3.5,2) circle (0.1);
\draw[fill = black] (4.5,0) circle (0.1);
\draw[fill = white] (4.5,2) circle (0.1);
\draw[fill = black] (5.2,0) circle (0.1);
\draw[fill = white] (5.2,2) circle (0.1);
\draw[fill = black] (5.7,0) circle (0.1);
\draw[fill = white] (5.7,2) circle (0.1);
\draw[fill = black] (6,0) circle (0.1);
\draw[fill = white] (6,2) circle (0.1);
\draw[fill = black] (8,0) circle (0.1);
\draw[fill = white] (8,2) circle (0.1);
\end{scope}

\begin{scope}[shift = {(0,-5)}]
\begin{scope}[shift = {(-0.7,3)}, scale=0.8,transform shape]
\node at (0,0) {$X_{\alpha(0)}$};
\end{scope}
\begin{scope}[shift = {(-0.7,2.5)}, scale=0.8,transform shape]
\node at (0,0) {$X_{\alpha(1)}$};
\end{scope}
\begin{scope}[shift = {(-0.7,2)}, scale=0.8,transform shape]
\node at (0,0) {$X_{\alpha(2)}$};
\end{scope}
\begin{scope}[shift = {(-0.7,1.5)}, scale=0.8,transform shape]
\node at (0,0) {$X_{\alpha(3)}$};
\end{scope}
\begin{scope}[shift = {(-0.7,1)}, scale=0.8,transform shape]
\node at (0,0) {$X_{\alpha(4)}$};
\end{scope}
\begin{scope}[shift = {(-0.7,0.5)}, scale=0.8,transform shape]
\node at (0,0) {$X_{\alpha(5)}$};
\end{scope}
\begin{scope}[shift = {(-0.7,0)}, scale=0.8,transform shape]
\node at (0,0) {$X_{\alpha(6)}$};
\end{scope}

\draw[thick] (0,3) -- (2,3);
\draw[thick] (0,2.5) -- (2.5,2.5);
\draw[thick] (0,2) -- (3,2);
\draw[thick] (0,1.5) -- (3.5,1.5);
\draw[thick] (0,1) -- (4,1);
\draw[thick] (0,0.5) -- (4.5,0.5);
\draw[thick] (0,0) -- (5,0);
\draw[dash pattern=on 1pt off 1pt, opacity = 0.8] (0,-0.5) -- (0,3);
\foreach \x in {0,...,6}{
	\draw[fill = white] (0,0.5*\x) circle (0.1);
	\draw[fill = black] (5-0.5*\x,0.5*\x) circle (0.1);
	}
	\draw[fill = black] (2.5,-0.3) circle (0.03);
	\draw[fill = black] (2.5,-0.5) circle (0.03);
	\draw[fill = black] (2.5,-0.7) circle (0.03);

\end{scope}

\def \limiteeeeeeeeeeeee{
	
	\draw[thick] (0,4) -- (2,4);
	\draw[thick] (0,3.2) -- (2.5,3.2);
	\draw[thick] (0,2.4) -- (3,2.4);
	\draw[thick] (0,1.6) -- (3.5,1.6);
	\draw[thick] (0,0.8) -- (4,0.8);
	\draw[thick] (0,0) -- (4.5,0);
	\draw[dash pattern=on 1pt off 1pt, opacity = 0.8] (0,-0.5) -- (0,4);
	\foreach \x in {0,...,5}{
		\draw[fill = white] (0,0.8*\x) circle (0.1);
		\draw[fill = black] (4.5-0.5*\x,0.8*\x) circle (0.1);
	}
	\draw[fill = black] (2.5,-0.3) circle (0.03);
	\draw[fill = black] (2.5,-0.5) circle (0.03);
	\draw[fill = black] (2.5,-0.7) circle (0.03);	
}

\begin{scope}[shift = {(8,-5)}]
\begin{scope}[shift = {(0,0)}, scale = 0.5]
\limiteeeeeeeeeeeee 
\node at (1,6) {${\tiny 1}$};
\draw[thick, fill = black, opacity = 0.4] (2-0.3,4-0.3) rectangle (2+0.3,4+0.3);
\end{scope}
\begin{scope}[shift = {(3,0)}, xscale = 0.4, yscale = 0.5]
\limiteeeeeeeeeeeee
\node at (1,6) {${\tiny \tfrac{1}{2}}$};
\end{scope}
\draw[dash pattern=on 1pt off 1pt, opacity = 0.8] (1.25,1.6) -- (1.25,2.5);
\draw[dash pattern=on 1pt off 1pt, opacity = 0.8] (4,1.6) -- (4,2.5);
\draw[dash pattern=on 1pt off 1pt, opacity = 0.8] (1.25,2.5) -- (4,2.5);
\begin{scope}[shift = {(5.2,0)}, xscale = 0.3, yscale = 0.5]
\limiteeeeeeeeeeeee
\node at (1,6) {${\tiny \tfrac{1}{3}}$};
\end{scope}
\draw[dash pattern=on 1pt off 1pt, opacity = 0.8] (4.25,1.2) -- (4.25,2.3);
\draw[dash pattern=on 1pt off 1pt, opacity = 0.8] (6.1,1.2) -- (6.1,2.3);
\draw[dash pattern=on 1pt off 1pt, opacity = 0.8] (4.25,2.3) -- (6.1,2.3);
\begin{scope}[shift = {(8,0)}, scale = 0.5]
\limiteeeeeeeeeeeee 
\node at (1,6) {${\tiny 0}$};
\draw[thick, fill = black, opacity = 0.4] (2-0.3,4-0.3) rectangle (2+0.3,4+0.3);
\end{scope}
\node at (7.2,1) {$\cdots$};
\node at (7.2,3) {$\cdots$};
\draw[dash pattern=on 1pt off 1pt, opacity = 0.8] (6.25,0.8) -- (6.25,2.1);
\draw[dash pattern=on 1pt off 1pt, opacity = 0.8] (6.25,2.1) -- (7,2.1);

\end{scope}

\end{tikzpicture}
	
	\caption{The construction in each of the four cases defined above. The black dots and white dots represent $x_{\alpha-1}$ and $y_{\alpha-1}$ respectively. The dashed lines represent the identified points and the boxes the new $x_{\alpha}$ and $y_{\alpha}$.}
	\label{figure_lachupadelmate}
\end{figure}

\begin{theorem}
	\label{teo_onichan} For every countable ordinal $\alpha$ the CPE class $\alpha$ is non-empty.
\end{theorem}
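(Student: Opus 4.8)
The plan is to prove by transfinite induction on the countable ordinal $\alpha$ that the system $(X_\alpha, T_\alpha)$ constructed above is exactly in the CPE class $\alpha$. The key structural fact, which I would establish first as a preliminary, is a formula for $\EP_1(X_\alpha, T_\alpha) = \EP(X_\alpha,T_\alpha) \cup \Delta$ in terms of the entropy pairs of the building blocks. Since $(X_1,T_1)$ is the full shift $\{0,1\}^G$, it has UPE, so $\EP_1(X_1,T_1) = X_1^2$. For the inductive constructions, the action $T_\alpha$ is (a quotient of) a product of $T_{\alpha-1}$ with a trivial action on a zero-entropy fiber ($\{1,2,3\}$ or $I$ with the identity action, or a disjoint union in the limit case). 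Using Lemma~\ref{lemadelencaje} together with the fact that entropy pairs are insensitive to pairing with a zero-entropy identity factor, I would show that $(p,q) \in \EP_1(X_\alpha)$ if and only if $p \neq q$ and $(\pi(p),\pi(q)) \in \EP_1(X_{\alpha-1})$, where $\pi$ is the projection to the first coordinate — except that the identifications glue together certain zero-entropy pairs and so one must track carefully which pairs get collapsed to the diagonal versus which become genuine off-diagonal entropy pairs. Concretely, in the even-successor case, $\EP(X_\alpha,T_\alpha)$ consists of those pairs lying in a single copy $X_{\alpha-1}\times\{i\}$ that project to entropy pairs, and the gluing at $(y_{\alpha-1},1)\sim(y_{\alpha-1},2)$, etc., means the orbit closures of $x_\alpha=(x_{\alpha-1},1)$ and $y_\alpha=(y_{\alpha-1},3)$ touch the shared points but are not directly an entropy pair.

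The heart of the argument is then a careful bookkeeping of how many rounds of the two operations — topological closure and transitive closure ($R \mapsto \overline{R}$ and $R \mapsto R^+$) — are needed before $X_\alpha^2$ is reached, and verifying that fewer rounds do not suffice. For the even-successor case $\alpha$: starting from $\EP_{\alpha-1}(X_{\alpha-1}) = X_{\alpha-1}^2$ (inductive hypothesis, assuming $\alpha - 1$ was already at its class), one sees that in $X_\alpha$ the set $\EP_{\alpha-1}(X_\alpha)$ equals the union of three copies of $X_{\alpha-1}^2$ (one in each slice $X_{\alpha-1}\times\{i\}$) plus the diagonal; these three copies are pairwise disjoint off the two glued points, so taking the transitive closure once connects slice $1$ to slice $2$ (via the point $(y_{\alpha-1},1)=(y_{\alpha-1},2)$) and slice $2$ to slice $3$ (via $(x_{\alpha-1},2)=(x_{\alpha-1},3)$), yielding $X_\alpha^2$. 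This is exactly one extra step, so $\alpha = (\alpha-1)+1$, an even successor, as intended. For the odd-successor cases the $I$-fiber with the alternating identifications requires one topological-closure step to make the sequence $\tfrac1n \to 0$ collapse the relevant pairs, then (if needed) one transitive step; the precise parity is arranged by the two different identification patterns. For the limit case, $\EP_\alpha(X_\alpha) = \bigcup_{\beta<\alpha}\EP_\beta(X_\alpha)$ by definition, and since $X_\alpha = \bigsqcup X_{\alpha(n)}/\!\sim$ with $y_{\alpha(n)}\sim y_{\alpha(n+1)}$, at stage $\alpha$ one has all the $X_{\alpha(n)}^2$ available but they are linked in a chain through the $y$'s; the topology $\tau_\alpha$ is rigged so that the union is already $X_\alpha^2$ precisely at level $\alpha$ and not before — one checks that for $\beta < \alpha$ some pair straddling two far-apart blocks $X_{\alpha(n)}, X_{\alpha(m)}$ is still missing.

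For the lower bound — that $(X_\alpha,T_\alpha)$ is \emph{not} in any class $\beta<\alpha$ — I would exhibit, for each $\beta < \alpha$, a specific pair of points not in $\EP_\beta(X_\alpha,T_\alpha)$. The natural candidate is $(x_\alpha, y_\alpha)$ together with its ``distance'' in the gluing structure: the design ensures that $x_\alpha$ and $y_\alpha$ sit at the two extreme ends of a configuration requiring exactly the prescribed number of alternating closures to be joined. One tracks that $(x_\alpha,y_\alpha) \in \EP_\alpha$ but $(x_\alpha,y_\alpha)\notin\EP_{\alpha-1}$, using that the projection of $\EP_{\alpha-1}(X_\alpha)$ to the slices is controlled by the inductive hypothesis applied to $X_{\alpha-1}$. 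The main obstacle, I expect, is the limit-ordinal step: one must verify both that the somewhat delicate topology $\tau_\alpha = \tau^1_\alpha \cup \tau^2_\alpha$ makes $X_\alpha$ compact metrizable and that it interacts correctly with the closure operation — specifically, that the closure at stage $\alpha$ sweeps up exactly the pairs across all blocks and not a single one of them earlier, which requires understanding which sequences in $\bigsqcup X_{\alpha(n)}$ converge and to what. A secondary subtlety is checking that the quotient maps do not accidentally create extra entropy (they cannot, since a factor of a zero-entropy-fiber extension adds no entropy) and that no ``new'' entropy pairs appear beyond those forced by the projection — this is where Theorem~\ref{teorema_ie_es_ep} and the independence characterization may be needed to rule out spurious IE-pairs created by the identifications.
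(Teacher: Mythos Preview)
Your upper-bound argument --- embedding each slice $X_{\alpha-1}\times\{i\}$ via Lemma~\ref{lemadelencaje} to obtain $(X_{\alpha-1}\times\{i\})^2\subset\EP_{\alpha-1}(X_\alpha,T_\alpha)$ and then applying one more transitive or closure step --- matches the paper's. The gap is in the lower bound. The first-coordinate projection $\pi$ you invoke cannot give the contradiction: $\pi(x_\alpha,y_\alpha)=(x_{\alpha-1},y_{\alpha-1})$ lies in $\EP_{\alpha-1}(X_{\alpha-1})=X_{\alpha-1}^2$ by the inductive hypothesis, so Lemma~\ref{lemadelencaje} applied to $\pi$ says nothing. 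Your claimed equality $\EP_{\alpha-1}(X_\alpha)=\bigcup_i(X_{\alpha-1}\times\{i\})^2$ is also false as stated: already $\EP_2(X_\alpha)$ contains cross-slice pairs between \emph{adjacent} slices, produced by chains through the identified fixed points. What you actually need is that no chain in the hierarchy crosses \emph{both} gluing points (even case), and proving this requires a strengthened inductive hypothesis --- that the specific pair $(x_{\alpha-1},y_{\alpha-1})$ lies outside $\EP_{\alpha-2}(X_{\alpha-1})$, not merely that $X_{\alpha-1}$ has class $\alpha-1$ --- since the middle segment of such a chain, living in slice $2$, would witness $(y_{\alpha-1},x_{\alpha-1})\in\EP_{\alpha-2}(X_{\alpha-1})$. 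You never formulate this strengthened hypothesis.

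The paper carries precisely this strengthening (its conditions (b) for successors and (c) for limits) and proves it not by chain analysis but via purpose-built \emph{collapse maps} together with Lemma~\ref{lemadelencaje}. In the even case, $\Psi_{1,3}:X_\alpha\to X_{\alpha-1}$ sends all of slice $1$ to the fixed point $y_{\alpha-1}$, all of slice $3$ to $x_{\alpha-1}$, and is the identity on slice $2$; since $\EP_{\alpha-1}$ is closed here, approximating sequences in $\EP_{\alpha-2}(X_\alpha)$ push forward under $\Psi_{1,3}$ to contradict (b) one level down. In the odd case the analogous map $\Psi_{N,N+2}$ collapses all but a single slice, and one uses transitivity of $\EP_{\alpha-1}$ rather than closedness; at limits and at successors-of-limits two further collapse maps play the same role. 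These maps --- each exploiting that $x_{\alpha-1}$ and $y_{\alpha-1}$ are \emph{fixed points}, so collapsing whole slices onto them stays continuous and $G$-equivariant --- are the missing ingredient in your sketch.
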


\begin{proof}
	By transfinite induction we will show that $(X_{\alpha},T_{\alpha})$ is in the
	CPE class $\alpha.$ We will use the following induction hypothesis on all $\beta < \alpha$.
	
	\begin{enumerate}
		\item[(a)]$\EP_{\beta}(X_{\beta},T_{\beta})=X_{\beta}^{2}.$
		\item[(b)] If $\beta$ is successor, we have that $(x_{\beta},y_{\beta})\notin\EP_{\beta-1}(X_{\beta},T_{\beta})$.
		\item[(c)] If $\beta$ is a countable limit ordinal, for every $n > 0$, we have  $(x_{\beta(0)},x_{\beta(n+1)})\notin\EP_{\beta(n)}(X_{\beta},T_{\beta})$.
	\end{enumerate}

	Clearly, showing these hypothesis is enough to prove the theorem. The base cases $\EP_{1}(X_{1},T_{1})$ and $\EP_{2}(X_{2},T_{2})$ are direct and thus there are only four cases to consider. We bring to the attention of the reader that in order to prove that (b) and (c) holds we shall proceed by contradiction. 
	\bigskip
	
	\textbf{Case 1}: $\alpha$ is an even successor ordinal.
	\bigskip
	
	(a) Let $\phi_{i}$ be the natural embedding that sends $X_{\alpha-1}$ to $X_{\alpha}$ by $\phi_i(x) = (x,i)$. This is clearly a continuous $G$-equivariant map, therefore using Lemma~\ref{lemadelencaje} and $\EP_{\alpha-1}
	(X_{\alpha-1},T_{\alpha-1})=X_{\alpha-1}^{2}$ we obtain that
	\[
(X_{\alpha-1}\times\{1\})^{2}\cup(X_{\alpha-1}\times\{2\})^{2}\cup(X_{\alpha-1}\times\{3\})^{2}\subset
	\EP_{\alpha-1}(X_{\alpha},T_{\alpha}).
	\]
	
	As $\EP_{\alpha}(X_{\alpha},T_{\alpha})$ is transitive,  $(y_{\alpha-1},1) \sim (y_{\alpha-1},2)$ and $(x_{\alpha-1},2) \sim (x_{\alpha-1},3)$ then $\EP_{\alpha}(X_{\alpha},T_{\alpha
	})=X_{\alpha}^{2}$.
	
	\bigskip
	
	(b) Suppose $((x_{\alpha-1},1),(y_{\alpha-1},3)) \in \EP_{\alpha-1}(X_{\alpha},T_{\alpha})$. As $\EP_{\alpha-1}(X_{\alpha},T_{\alpha})$ is closed we can find sequences
	$\{(x^{(n)},i^{(n)})\}_{n\in\NN} \longrightarrow (x_{\alpha-1},1)$, $\{(y^{(n)},j^{(n)})\}_{n\in\NN} \longrightarrow (y_{\alpha-1},3)$ such that $((x^{(n)},i^{(n)}),(y^{(n)},j^{(n)}))\in\EP_{\alpha-2}(X_{\alpha},T_{\alpha})$ for each $n \in \NN$. Define the projection map $\Psi_{1,3} : X_{\alpha} \to X_{\alpha-1}$ by
	$$\Psi_{1,3}(x,i) = \begin{cases}
	y_{\alpha-1} & \mbox{ if } i = 1\\
	x & \mbox{ if } i = 2\\
	x_{\alpha-1} & \mbox{ if } i = 3\\
	\end{cases}$$
	
	This map is $G$-equivariant and continuous. Using Lemma~\ref{lemadelencaje} we get $(\Psi_{1,3}(x^{(n)},i^{(n)}),\Psi_{1,3}(y^{(n)},j^{(n)}))\in\EP_{\alpha-2}(X_{\alpha-1},T_{\alpha-1})$. By definition of $\tau_{\alpha}$, there must be $n$ large enough
	such that $i^{(n)} =1$ and $j^{(n)} = 3$, therefore we conclude that $(y_{\alpha-1},x_{\alpha-1}) \in \EP_{\alpha-2}(X_{\alpha-1},T_{\alpha-1})$ thus contradicting $(b)$.
	\bigskip
	
	\textbf{Case 2}: $\alpha$ is an odd successor ordinal and $\alpha-1$ is not a
	limit ordinal.
	\bigskip
	
	(a) Using the embeddings $\phi_i(x) = (x,i)$ for each $i \in I$, we conclude using Lemma~\ref{lemadelencaje} and $\EP_{\alpha-1}(X_{\alpha-1},T_{\alpha
		-1})=X_{\alpha-1}^{2}$ that
	\[
	\bigcup_{i \in I}(X_{\alpha-1}\times\{i\})^{2}\subset\EP_{\alpha-1}(X_{\alpha
	},T_{\alpha}).
	\]
	As $\EP_{\alpha-1}(X_{\alpha},T_{\alpha})$ is transitive and for each $n \in \NN$ either $(y_{\alpha
		-1},\tfrac{1}{n})\sim(y_{\alpha-1},\tfrac{1}{n+1})$ or $(x_{\alpha
		-1},\tfrac{1}{n})\sim(x_{\alpha-1},\tfrac{1}{n+1})$ we deduce that $(X_{\alpha-1}%
	\times\{\tfrac{1}{n}\mid n>1\})^{2}\subset\EP_{\alpha-1}(X_{\alpha},T_{\alpha})$. As
	this set is dense and $\EP_{\alpha}(X_{\alpha},T_{\alpha})$ is closed we
	conclude that $\EP_{\alpha}(X_{\alpha},T_{\alpha})=X_{\alpha}^{2}$.
	\bigskip
	
	(b) Suppose $((x_{\alpha-1},1),(y_{\alpha-1},0)) \in \EP_{\alpha-1}(X_{\alpha},T_{\alpha})$. As $\EP_{\alpha-1}(X_{\alpha},T_{\alpha})$ is transitive, there is a
	finite sequence of points $\left\{ (x^{(k)}, i^{(k)}) \right\}_{k=1}^{n}$ such that $(x^{(1)},i^{(1)})%
	=(x_{\alpha -1},1),$ $(x^{(n)},i^{(n)})=(y_{\alpha -1},0)$ and $((x^{(k)}, i^{(k)}),(x^{(k+1)},i^{(k+1)}))$ is in $\EP_{\alpha-2}(X_{\alpha},T_{\alpha}).$ Let $r \in [1,n-1]$ be the largest index such that $i^{(r)}\neq 0$ and $i^{(r+1)}= 0$. Let $N$ be an odd number such that $N > \tfrac{1}{i^{(r)}}$ and define the map $\Psi_{N,N+2} : X_{\alpha} \to X_{\alpha-1}$ as
	
	$$\Psi_{N,N+2}(x,i) = \begin{cases}
	y_{\alpha-1} & \mbox{ if } i \geq \tfrac{1}{N}\\
	x & \mbox{ if } i = \tfrac{1}{N+1}\\
	x_{\alpha-1} & \mbox{ if } i \leq \tfrac{1}{N+2}\\
	\end{cases}$$
	
	As in Case 1, the map $\Psi_{N,N+2} : X_{\alpha} \to X_{\alpha-1}$ is $G$-equivariant and continuous, therefore by Lemma~\ref{lemadelencaje} we obtain that $$(\Psi_{N,N+2}(x^{(r)},i^{(r)}),\Psi_{N,N+2}(x^{(r+1)},i^{(r+1)})) = (y_{\alpha-1},x_{\alpha-1}) \in \EP_{\alpha-2}(X_{\alpha-1},T_{\alpha-1}).$$ This contradicts the induction hypothesis (b).

	\bigskip
	
	\textbf{Case 3:} $\alpha$ is a limit ordinal.
	
	\bigskip
	
	(a) By definition $\EP_{\alpha}(X_{\alpha},T_{\alpha})=\bigcup_{\beta < \alpha} \EP_{\beta}(X_{\alpha},T_{\alpha}).$ Using Lemma~\ref{lemadelencaje} with the natural embeddings we
	obtain that $\bigcup_{n \in \NN} X_{\alpha(n)}^{2}$ $\subset\EP_{\alpha}(X_{\alpha
	},T_{\alpha})$. Using that $y_{\alpha(n)}\sim y_{\alpha(n+1)}$ for
	every $n \in \NN$ and that $\EP_{\alpha}(X_{\alpha},T_{\alpha})$ is an
	equivalence relation we conclude $\EP_{\alpha}(X_{\alpha},T_{\alpha
	})=X_{\alpha}^{2}.$
	\bigskip
	
	(c) Suppose there exists $n \in \NN$ such that $(x_{\alpha(0)},x_{\alpha(n+1)})\in\EP_{\alpha(n)}(X_{\alpha},T_{\alpha}).$ We define $\phi:X_{\alpha}\rightarrow X_{\alpha(n+1)}$ as follows:
	\[
	\phi(x)=
	\begin{cases}
	x & \text{if }x\in X_{\alpha(n+1)}\\
	y_{\alpha(n+1)} & \text{otherwise}.%
	\end{cases}
	\]
	One can check that $\phi$ is $G$-equivariant and continuous and thus applying Lemma~\ref{lemadelencaje} we obtain that $\phi((x_{\alpha(0)},x_{\alpha
		(n+1)}))=(y_{\alpha(n+1)},x_{\alpha(n+1)})\in\EP_{\alpha(n)}(X_{\alpha
		(n+1)},T_{\alpha(n+1)});$ a contradiction with the induction hypothesis (b).

	\bigskip
	
	\textbf{Case 4:} $\alpha-1$ is a limit ordinal.
	
	\bigskip
	(a) The argument follows exactly as in Case 2.
	
	\bigskip
	
	(b) Suppose $((x_{\alpha(0)},1),(x_{\alpha(0)},0))\in\EP_{\alpha-1}(X_{\alpha},T_{\alpha}).$ Since $\alpha-1$ is a limit ordinal this implies there exists $n \in \NN$ such that $((x_{\alpha(0)},1),(x_{\alpha(0)},0))\in\EP_{\alpha(n)}(X_{\alpha},T_{\alpha
	}).$  Choose $m > n$ and define $\varphi_m : X_{\alpha} \to X_{\alpha-1}$ by 
	
	\[
	\varphi_m(x,i)=
	\begin{cases}
	x & \text{if } i < \tfrac{1}{m}\\
	x_{\alpha(m)} & \mbox{ otherwise. }
	\end{cases}
	\]
	
	As in the previous cases, $\varphi_m$ is $G$-equivariant. As the only identification points are of the form $(x_{\alpha(n)},\tfrac{1}{n})\sim(x_{\alpha(n)},\tfrac{1}{n+1})$ we deduce $\varphi_m$ is continuous. By Lemma~\ref{lemadelencaje} we obtain that $$(\varphi_m(x_{\alpha(0)},1),\varphi_m(x_{\alpha(0)},0)) = (x_{\alpha(m)},x_{\alpha(0)}) \in\EP_{\alpha(n)}(X_{\alpha-1},T_{\alpha-1})$$ As $m > n$ this contradicts the induction hypothesis (c).
\end{proof}
\begin{theorem}\label{teo_onichan2} For every countable ordinal $\alpha$ the asymptotic class $\alpha$ is non-empty.
\end{theorem}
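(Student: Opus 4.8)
The plan is to show that the same family $(X_\alpha,T_\alpha)$ constructed above witnesses the statement, by proving via transfinite induction that $(X_\alpha,T_\alpha)$ lies in the asymptotic class $\alpha$. I would carry the asymptotic counterparts of the three hypotheses used in the proof of Theorem~\ref{teo_onichan}: (a$'$) $\R_\alpha(X_\alpha,T_\alpha)=X_\alpha^2$; (b$'$) if $\alpha$ is a successor, then $(x_\alpha,y_\alpha)\notin\R_{\alpha-1}(X_\alpha,T_\alpha)$; and (c$'$) if $\alpha$ is a countable limit with defining sequence $\{\alpha(n)\}_{n\in\NN}$, then $(x_{\alpha(0)},x_{\alpha(n+1)})\notin\R_{\alpha(n)}(X_\alpha,T_\alpha)$ for every $n>0$. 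Because the chain $\{\R_\beta(X_\alpha,T_\alpha)\}_\beta$ is increasing, (b$'$), resp.\ (c$'$), forces $\R_\beta(X_\alpha,T_\alpha)\neq X_\alpha^2$ for every $\beta<\alpha$, so these hypotheses together with (a$'$) place $(X_\alpha,T_\alpha)$ precisely in the asymptotic class $\alpha$. The functoriality tool is Remark~\ref{remark_delencaje} in place of Lemma~\ref{lemadelencaje}: for every $G$-equivariant continuous $\phi\colon X\to Y$ and every ordinal $\beta$ one has $\phi(\R_\beta(X,T))\subseteq\R_\beta(Y,S)$.

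The base cases I would treat directly. For $\alpha=1$, a pair in the full shift $(X_1,T_1)$ is asymptotic exactly when its two coordinates differ in finitely many positions, so $\R_0(X_1,T_1)=\R(X_1,T_1)$ is a proper, non-closed equivalence relation; since the set of configurations equal to $0^G$ off a finite set is dense in $X_1$, we get $\R_1(X_1,T_1)=\overline{\R_0(X_1,T_1)}=X_1^2$, while $\R_0(X_1,T_1)\neq X_1^2$ because $(x_1,y_1)=(0^G,1^G)$ differs everywhere; hence $(X_1,T_1)$ is in the asymptotic class $1$. For $\alpha=2$, the embeddings $\phi_i$ and $\R_1(X_1,T_1)=X_1^2$ give $(X_1\times\{i\})^2\subseteq\R_1(X_2,T_2)$ for $i\in\{1,2,3\}$, and since copies $1,2$ are glued at $y_1$ and copies $2,3$ at $x_1$, the transitive closure $\R_2(X_2,T_2)=\R_1(X_2,T_2)^+$ equals $X_2^2$; meanwhile $(x_2,y_2)=((0^G,1),(1^G,3))\notin\R_1(X_2,T_2)$ by applying the projection $\Psi_{1,3}$ as in Case~1 below (with $\R_0$ in the role of $\R_{\alpha-2}$ and using that $(0^G,1^G)$ is not asymptotic), so $\R_1(X_2,T_2)\neq X_2^2$ and $(X_2,T_2)$ is in the asymptotic class $2$.

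For $\alpha\ge 3$ I would run the four cases of the proof of Theorem~\ref{teo_onichan} almost verbatim, with $\EP_\bullet$ replaced by $\R_\bullet$, the hypotheses (a),(b),(c) by (a$'$),(b$'$),(c$'$), and Lemma~\ref{lemadelencaje} by Remark~\ref{remark_delencaje}. For (a$'$): the embeddings $\phi_i$ push $\R_{\alpha-1}(X_{\alpha-1},T_{\alpha-1})=X_{\alpha-1}^2$ into every copy; transitivity of the relevant level propagates this across the identified fixed points in the even-successor case and in the limit case, while in the odd-successor cases a topological closure fills in the dense $I$-indexed family of copies. For (b$'$) and (c$'$): the same maps $\Psi_{1,3}$, $\Psi_{N,N+2}$, $\phi$, $\varphi_m$ are $G$-equivariant and continuous, so Remark~\ref{remark_delencaje} applied to them produces exactly the contradictions of Theorem~\ref{teo_onichan}, now against (b$'$) or (c$'$).

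The main obstacle --- and really the only point requiring genuine care --- is the bookkeeping that the $\R$-hierarchy performs the same sequence of topological-closure and transitive-closure operations, at the levels used above, as the $\EP$-hierarchy does. The $\EP$-hierarchy starts from the closed set $\EP_1=\EP\cup\Delta$, whereas the $\R$-hierarchy starts from the equivalence relation $\R_0=\R$; nonetheless one verifies along the induction --- using that (b$'$)/(c$'$) preclude any premature stabilization --- that for $k\ge 1$ the relation $\R_k(X_\alpha,T_\alpha)$ is an equivalence relation exactly when $k$ is even and is closed exactly when $k$ is odd, matching the parity of $\EP_k(X_\alpha,T_\alpha)$. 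Thus every appeal to ``$\EP_\beta$ is transitive'' or ``$\EP_\beta$ is closed'' in the proof of Theorem~\ref{teo_onichan} has a valid counterpart for $\R_\beta$, and the argument transfers step by step; in fact the Case~1 argument now already covers $\alpha=2$, since $\R_0$ is defined where $\EP_0$ was not, although there is no harm in keeping $\alpha=2$ among the base cases.
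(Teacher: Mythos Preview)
Your proposal is correct and follows essentially the same approach as the paper: reuse the family $(X_\alpha,T_\alpha)$, replace Lemma~\ref{lemadelencaje} by Remark~\ref{remark_delencaje}, verify the base cases $\alpha=1,2$ directly, and run the four-case transfinite induction verbatim with $\R_\bullet$ in place of $\EP_\bullet$. Your explicit discussion of the parity bookkeeping (that $\R_k$ is closed for odd $k$ and an equivalence relation for even $k$, matching $\EP_k$) is a point the paper leaves implicit, so your write-up is in fact slightly more careful than the original.
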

\begin{proof}
	The previous construction also works in this case. We only need to check the following conditions and the rest of the proof works verbatim.
	
	\begin{enumerate}[(i)]
		\item Let $(X,T)$ and $(Y,S)$ two $G$-TDS. If $\phi:X \longrightarrow Y$ is a $G$-equivariant continuous map then for every ordinal $\alpha \geq 0$, we have
		$\phi(\R_{\alpha}(X,T))\subset\R_{\alpha}(Y,S)$.
		\item $(X_{1},T_{1}) = (\{0,1\}^G,\sigma)$ is in the asymptotic class 1.
		\item $(X_{2},T_{2})$ is in the asymptotic class 2 and $(x_2,y_2) \notin \R_1(X_2,T_2)$.
	\end{enumerate}
	
	Condition (i) is the version of Lemma~\ref{lemadelencaje} for asymptotic pairs, which holds by Remark~\ref{remark_delencaje}. Conditions (ii) and (iii) are direct. The rest of the proof is done with transfinite induction as in the previous theorem.
\end{proof}
\section{Characterization of entropy pairs for TDS with the POTP}\label{section_teofel}
We say that a set $H \subset G$ is \textbf{$P$-separated} for $P \subset G$ if
for each $d,d^{\prime}\in H$, $(P\cdot d) \cap(P\cdot d^{\prime})\neq\varnothing\implies d = d^{\prime}$.

\begin{lemma}
	\label{lem:Delone_set_condorito}  Let $G$ be a countable amenable group,
	$\{F_{n}\}_{n \in\NN}$ a F\o lner sequence and $J \subset G$ such that
	\[
	\liminf_{n \to\infty} \frac{|F_{n} \cap J|}{|F_{n}|} > 0.
	\]

	For every $P \Subset G$ there exists a subset $H \subset J$ which is
	$P$-separated and such that
	\[
	\liminf_{n \to\infty} \frac{|F_{n} \cap H|}{|F_{n}|} > 0.
	\]
	
\end{lemma}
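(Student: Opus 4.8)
The plan is to construct $H$ greedily by a maximal packing argument. Fix $P \Subset G$; without loss of generality enlarge $P$ so that $e \in P$ and $P = P^{-1}$ (replacing $P$ by $P \cup P^{-1} \cup \{e\}$ only makes the $P$-separation condition stronger). The key observation is that $H \subset J$ is $P$-separated exactly when the sets $\{P \cdot d : d \in H\}$ are pairwise disjoint, which by the symmetry of $P$ is equivalent to saying that $H$ is a $(P^2)$-separated set in the usual sense: $d \neq d'$ in $H$ implies $d' \notin P^2 \cdot d$. Choose $H$ to be a maximal subset of $J$ with the property that $d' \notin P^2 d$ for all distinct $d, d' \in H$ (such a maximal set exists by Zorn's lemma, or by a direct greedy enumeration since $G$ is countable). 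Maximality then gives that every element of $J$ lies in $P^2 \cdot d$ for some $d \in H$, i.e. $J \subset P^2 \cdot H$.

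The next step is to transfer the density lower bound from $J$ to $H$ using the inclusion $J \subset P^2 \cdot H$ together with the F\o lner property. Intersecting with $F_n$ and using that each $g \in F_n \cap J$ can be written $g = p d$ with $p \in P^2$ and $d \in H$, one gets
\[
|F_n \cap J| \;\le\; |P^2 \cdot H \cap F_n| \;\le\; \sum_{p \in P^2} |p \cdot H \cap F_n| \;=\; \sum_{p \in P^2} |H \cap p^{-1} F_n|.
\]
Now $|H \cap p^{-1}F_n| \le |H \cap F_n| + |p^{-1} F_n \setminus F_n| \le |H \cap F_n| + |p^{-1} F_n \,\triangle\, F_n|$, and since $\{F_n\}$ is F\o lner and $P^2$ is finite, $\sum_{p \in P^2} |p^{-1}F_n \,\triangle\, F_n| = o(|F_n|)$. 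Combining these estimates yields
\[
|F_n \cap J| \;\le\; |P^2| \cdot |F_n \cap H| \;+\; o(|F_n|),
\]
so dividing by $|F_n|$ and taking $\liminf$ gives $\liminf_n \frac{|F_n \cap H|}{|F_n|} \ge \frac{1}{|P^2|}\liminf_n \frac{|F_n \cap J|}{|F_n|} > 0$, which is exactly what we need.

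The only genuinely delicate point is the interchange of the sum over $p \in P^2$ with the $\liminf$ and making sure the F\o lner error terms are handled uniformly; since $P^2$ is a fixed finite set this is routine, but it is worth writing the bound $|p^{-1}F_n \triangle F_n|/|F_n| \to 0$ explicitly (it follows from the left F\o lner condition applied to the finite set $K = P^2$, or $K = (P^2)^{-1}$ depending on the convention, together with $|F_n \triangle p^{-1}F_n| \le |K F_n \triangle F_n|$). Everything else is elementary set manipulation. I expect no real obstacle here beyond bookkeeping with left versus right translates, which is why I fixed $P$ symmetric at the outset to minimize that friction.
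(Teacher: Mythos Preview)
Your proof is correct and follows essentially the same approach as the paper: both take a maximal $P$-separated subset $H \subset J$ via Zorn's lemma, use maximality to cover $J$ by finitely many translates of $H$, and then invoke the F\o lner condition to transfer the density lower bound. The only cosmetic differences are that the paper introduces an auxiliary set $C \supset P^{-1}P$ and phrases the covering as a $|C|$-to-$1$ map $F_n \cap J \to C\cdot F_n \cap H$, whereas you symmetrize $P$ at the outset and bound via the union $J \subset P^2 \cdot H$ directly; these are equivalent formulations of the same counting argument.
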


\begin{proof}
	From Zorn's lemma, we can extract a maximal $P$-separated subset $H$ of $J$.
	Let $C \supset P^{-1}\cdot P$ be a finite subset of $G$. Every $g \in J$ must
	satisfy that $H \cap C\cdot g \neq\varnothing$, otherwise the set
	$H^{\prime}:= H \cup\{g\}$ is $P$-separated thus contradicting the maximality of $H$. Therefore,
	there is a $|C|$-to-$1$ function from $F_{n} \cap J$ to $C\cdot F_{n} \cap H$.
	This implies that,
	\[
	\frac{|C\cdot F_{n} \cap H|}{|F_{n}|} \geq\frac{|F_{n} \cap J|}{|F_{n}||C|}.
	\]
	On the other hand, as $\{F_{n}\}_{n \in\NN}$ is F\o lner, we have,
	\[
	\liminf_{n \to\infty} \frac{|C \cdot F_{n} \cap H|}{|F_{n}|} = \liminf_{n
		\to\infty} \left( \frac{|(C \cdot F_{n} \diagdown F_{n}) \cap H|}{|F_{n}|} + \frac{|F_{n} \cap H|}{|F_{n}|} \right) = 0 + \liminf_{n \to
		\infty}\frac{|F_{n} \cap H|}{|F_{n}|}.
	\]
	Therefore, we conclude that
	\[
	\liminf_{n \to\infty} \frac{|F_{n} \cap H|}{|F_{n}|} \geq\frac{1}{|C|}%
	\liminf_{n \to\infty} \frac{|F_{n} \cap J|}{|F_{n}|} > 0.
	\]
	
\end{proof}

\begin{theorem}
	\label{pipeteorema} Let $(X,T)$ be a $G$-TDS with the pseudo-orbit tracing property.
	
	\begin{enumerate}[(i)]
		\item If $(x,y)\in\EP(X,T)$ then $(x,y)\in\overline{\R^{\varepsilon
			}(X,T)}\diagdown\Delta$ for every $\varepsilon>0$ and there exists an
		invariant measure $\mu$ such that $x,y\in \operatorname{supp}(\mu).$
		
		\item If $(x,y)\in \R(X,T)$ and there exists an invariant
		measure $\mu$ such that $x,y\in \operatorname{supp}(\mu)$ then
		$(x,y)\in\EP(X,T)\cup\Delta$. 
	\end{enumerate}
\end{theorem}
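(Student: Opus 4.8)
The two implications run in opposite directions, but both rely on the same device: use the pseudo-orbit tracing property to perform surgery on orbits, after first locating, via Lemma~\ref{lem:Delone_set_condorito}, a positive-density subset of $G$ along which the surgery can be carried out in a "separated" and hence non-interfering way.

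\emph{Part (i).} First I would record that $\EP(X,T)\subseteq\IE(X,T)$ by Theorem~\ref{teorema_ie_es_ep}, and that an IE-pair $(x,y)$ with $x\neq y$ forces an invariant $\mu$ with $x,y\in\operatorname{supp}(\mu)$ — this is part of Kerr and Li's description of IE-tuples (it can also be read off from the construction below). Since $x\neq y$ we have $(x,y)\notin\Delta$ automatically, so the real content is membership in $\overline{\R^{\varepsilon}(X,T)}$, which I would prove by contradiction. Suppose that for some $\varepsilon>0$ there are disjoint closed neighbourhoods $U_x\ni x$, $U_y\ni y$ such that no pair in $U_x\times U_y$ is $\varepsilon$-asymptotic. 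Apply the POTP to $\varepsilon/3$ to get $\delta,S$, and shrink $U_x,U_y$ so that $T^{k}$ expands them by less than $\delta$ for all $k$ in the relevant finite sets. As $(x,y)$ is an IE-pair, $(\operatorname{int}U_x,\operatorname{int}U_y)$ admits an independence set $J$ of positive density for some F\o lner sequence satisfying the PET; by Lemma~\ref{lem:Delone_set_condorito} refine it to a $P$-separated independence set $H$ of positive density, with $P$ chosen (in terms of $S$) so that the finite windows around distinct points of $H$ stay disjoint and do not interact in the pseudo-orbit condition. Using the independence of $H$ together with compactness, one produces two points $w_1,w_2\in X$ realizing almost the same pattern on $H$ — agreeing, up to the diameter of the neighbourhoods, at every point of $H$ except one, where $w_1$ lands in $U_x$ and $w_2$ in $U_y$ — and then splices the orbits of $w_1$ and $w_2$ along a large finite window into one pseudo-orbit whose traced point $x'$ lies in $U_x$, while $w_2$ itself serves as $y'\in U_y$; since only a finite window was altered, $(x',y')$ comes out $\varepsilon$-asymptotic, contradicting the choice of $U_x,U_y$. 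The delicate point, and the one I expect to be the real obstacle, is exactly this splicing: $w_1$ and $w_2$ are only controlled to agree on the sparse set $H$, whereas the pseudo-orbit condition at the window boundary demands $\delta$-agreement of the two orbits there, so one must take the window as a union of the separated sub-windows around $H\cap(\text{a large F\o lner set})$ and use the $P$-separation to confine every boundary mismatch to a place where the orbits already coincide — which is precisely what Lemma~\ref{lem:Delone_set_condorito} is engineered for, the bookkeeping of window-sizes against $\delta$ and $P$ being the technical heart.

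\emph{Part (ii).} Now $(x,y)$ is asymptotic and $\mu$ invariant with $x,y\in\operatorname{supp}(\mu)$; if $x=y$ there is nothing to prove, so assume $x\neq y$ and aim to show $(x,y)$ is an IE-pair, whence $(x,y)\in\EP(X,T)\cup\Delta_{\mathcal S}$ by Theorem~\ref{teorema_ie_es_ep} and in fact $(x,y)\in\EP(X,T)$. Fix small disjoint neighbourhoods $O_x\ni x$, $O_y\ni y$; by monotonicity of the independence property it suffices to find an independence set of positive density for this pair. Pick $\varepsilon'>0$ small, get POTP constants $\delta,S$ for tracing precision $\varepsilon'$, let $F^{*}\Subset G$ be a set off which $d(T^{g}x,T^{g}y)\le\varepsilon'$ (it exists since $(x,y)$ is $\varepsilon'$-asymptotic), enlarge to a window $F'\supseteq(S\cup S^{-1}\cup\{e\})^{2}F^{*}$, and shrink $O_x$ (keeping $x$ inside) so that $T^{k}\overline{O_x}$ has diameter $<\delta/2$ for all $k\in F'\cup SF'$. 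Choose a F\o lner sequence satisfying the PET and a point $z$ which is PET-generic for it with $\mathbb{E}(\indicator{O_x})(z)>0$ (possible as $\mu(O_x)>0$); then $J:=\{h\in G: T^{h}z\in O_x\}$ has positive density, and Lemma~\ref{lem:Delone_set_condorito} yields a $Q$-separated $H\subseteq J$ of positive density with $Q$ so large that the left cosets $F'h$, together with their $S$-fattenings, are pairwise disjoint over distinct $h\in H$.

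Given a finite $I\Subset H$ and a choice $\phi:I\to\{x,y\}$, I would build the pseudo-orbit equal to $T^{g}z$ except on each window $F'h$ with $\phi(h)=y$, where it is set to $T^{gh^{-1}}y$, i.e.\ the genuine orbit of $T^{h^{-1}}y$ re-based at $h$. Inside a window and in the background this is exact; at a window boundary one compares $T^{k}y$ with $T^{k}(T^{h}z)$ for $k\in F'\setminus F^{*}$, and since $T^{h}z\in O_x$ is $\delta/2$-close to $T^{k}x$ after applying $T^{k}$ while $T^{k}x$ is $\varepsilon'$-close to $T^{k}y$ (as $k\notin F^{*}$), the mismatch stays $<\delta$; the choice of $F'$ relative to $S$ guarantees the defect region $F^{*}h$ never abuts the background directly, so the estimate always applies. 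Tracing this pseudo-orbit by a point $p$ gives $T^{h}p\in O_{\phi(h)}$ for every $h\in I$ — at $\phi(h)=y$ positions because $c_h=y$ exactly, at $\phi(h)=x$ positions because $c_h=T^{h}z$ was chosen well inside $O_x$ — so $H$ is an independence set for $(O_x,O_y)$, and passing to a subsequence of the F\o lner sequence turns its density into a genuine limit. Thus $(x,y)\in\IE(X,T)$ and we are done. Throughout part (ii) the main nuisance is the noncommutativity of $G$ in the window estimates: one must keep the "coordinates" of the surgery in a fixed finite set rather than a conjugate of one, which is exactly why the windows are the left cosets $F'h$ and why the separation $Q$ is taken to absorb all the conjugation; once this is arranged the rest is routine.
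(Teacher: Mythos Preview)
Your Part (ii) is essentially the paper's argument: locate via the PET a point $w$ whose orbit returns to a neighbourhood of $x$ along a positive-density set, thin it to a separated subset with Lemma~\ref{lem:Delone_set_condorito}, and then for any finite choice function build a pseudo-orbit by overwriting the windows around the chosen indices with shifted copies of the orbit of $x$ or $y$. The paper overwrites every window (with $\phi(i)\in\{x,y\}$) while you overwrite only the $y$-windows and keep the orbit of $z$ at the $x$-windows, but this is an inessential difference; the boundary estimate via the triangle inequality through $T^{k}x$ is the same in both.

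Part (i), however, has a genuine gap. Your scheme is to pick, from the independence property, two points $w_1,w_2$ that realise different patterns on the separated set $H$ and then splice the orbit of $w_1$ into that of $w_2$ along a finite window. You correctly identify the obstacle --- independence controls $w_1,w_2$ only at points of $H$, not on the boundary of any window --- but your proposed cure, ``take the window as a union of the separated sub-windows around $H\cap F_n$ and use the $P$-separation to confine every boundary mismatch to a place where the orbits already coincide'', does not work: the boundaries of those sub-windows lie \emph{outside} $H$ (precisely by separation), and independence tells you nothing there, so there is no reason for $T^{g}w_1$ and $T^{g}w_2$ to be $\delta$-close at those boundaries. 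Lemma~\ref{lem:Delone_set_condorito} is in fact not used at all in the paper's Part (i).

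The paper replaces this with a counting argument. Fix a finite $\delta$-cover $\mathcal C$ and form the boundary partition $\partial K_m=\bigvee_{g\in S\cdot F_m\setminus F_m}T^{g^{-1}}\mathcal C$, whose cardinality is at most $|\mathcal C|^{|S\cdot F_m\setminus F_m|}$, a quantity that grows subexponentially in $|F_m|$ by the F\o lner property. On the other hand the independence set $J$ gives $2^{|J\cap F_m|}$ many points realising pairwise distinct $\{U_x,U_y\}$-patterns on $J\cap F_m$, and this grows exponentially; so by pigeonhole two such points $x',y'$ lie in the same cell of $\partial K_m$ while disagreeing at some $j\in J\cap F_m$ (with $T^{j}x'\in U_x$, $T^{j}y'\in U_y$). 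Belonging to the same cell of $\partial K_m$ means $d(T^{g}x',T^{g}y')\le\delta$ for every $g$ on the \emph{entire} boundary $S\cdot F_m\setminus F_m$, which is exactly the global agreement your approach was missing; now a single splice $z_g=T^{g}x'$ for $g\in F_m$, $z_g=T^{g}y'$ otherwise, is an $(S,\delta)$-pseudo-orbit, and its trace $z'$ gives the required $\varepsilon$-asymptotic pair $(T^{j}z',T^{j}y')\in B_\varepsilon(x)\times B_\varepsilon(y)$.
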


\begin{proof}
	Let $(x,y)\in\EP(X,T).$ By Theorem~\ref{teorema_ie_es_ep} we have that $(x,y)\in$ $\IE(X,T).$ Let
	$\left\{  F_{n}\right\}_{n \in \NN}$ be a F\o lner sequence which satisfies the PET.
	
	\bigskip
	Part (i). We shall show that for any $\varepsilon>0$ there exists an $\varepsilon
	$-asymptotic pair in $B_{\varepsilon}(x)\times B_{\varepsilon}(y)$. Let
	$\delta>0$ and $S\Subset G$ be respectively the number and finite set given by
	the POTP for $\varepsilon/2$. Without loss of generality, we assume $S=S^{-1}%
	$. Let $\mathcal{C}$ be a finite $\delta$-cover
	of $X$ . For every $n\in\mathbb{N}$ we define
	\begin{align*}
	K_{n} &  :=\bigvee\nolimits_{g\in F_{n}}T^{g^{-1}}\mathcal{C}\\
	\partial K_{n} &  :=\bigvee\nolimits_{g\in S\cdot F_{n}\diagdown F_{n}%
	}T^{g^{-1}}\mathcal{C}%
	\end{align*}
	Let $(U_{x},U_{y})=(B_{\varepsilon/2}(x),B_{\varepsilon/2}(y)).$ Since $(x,y)$
	is an IE-pair, we know there exists an independence set $J\subset G$ for
	$(U_{x},U_{y})$ with positive density $D(J)$ with respect to $\{F_{n}%
	\}_{n\in\NN}$. Using that $D(J)>0$ we have that for all sufficiently large
	$n$
	\[
	{|J\cap F_{n}|}\geq{\frac{D(J)}{2}\cdot\left\vert F_{n}\right\vert }.
	\]
	We also have that for all $n\in\NN$.
	\[
	N(\partial K_{n})\leq\left\vert \mathcal{C}\right\vert ^{\left\vert S\cdot
		F_{n}\diagdown F_{n}\right\vert }.
	\]
	Since $\left\{  F_{n}\right\}  _{n\in\NN}$ is a F\o lner sequence for all
	sufficiently large $m$ we have
	\[
	2^{\frac{D(J)}{2}\left\vert F_{m}\right\vert
	}>\left\vert \mathcal{C}\right\vert ^{\left\vert S\cdot F_{m}\diagdown
		F_{m}\right\vert }\mbox{ and }{|J\cap F_{m}|}\geq{\frac{D(J)}{2}%
		\cdot\left\vert F_{m}\right\vert }.
	\]
	This implies that for large enough $m$, $2^{|J\cap F_{m}|}>N(\partial K_{m})$.
	In particular, as $J$ is an independence set for $(U_{x},U_{y})$, there exist
	$j\in J\cap F_{m}$ and $C\in\partial K_{m}$ and $x^{\prime},y^{\prime}\in C$
	such that
	\begin{align*}
	T^{j}x^{\prime} &  \in U_{x},\\
	T^{j}y^{\prime} &  \in U_{y},
	\end{align*}
	Since $x^{\prime},y^{\prime}\in C$, for every $g\in S\cdot F_{m}\diagdown
	F_{m}$ we have that $d(T^{g}x^{\prime}$,$T^{g}y^{\prime})\leq\delta$. We
	define the sequence
	\[
	\{z_{g}\}_{g\in G}\ \mbox{ such that }\ z_{g}=\left\{
	\begin{array}
	[c]{cc}%
	T^{g}x^{\prime} & \text{if }g\in F_{m}\text{ }\\
	T^{g}y^{\prime} & \mbox{otherwise.}
	\end{array}
	.\right.
	\]
	We claim that $\left\{  z_{g}\right\}  _{g\in G}$ is an $(S,\delta)$
	pseudo-orbit. \newline
	
	If $g \in F_{m}$ then either $s\cdot g\in F_{m}$ or $s\cdot g \in S\cdot F_{m}
	\setminus F_{m}$.
	
	\begin{itemize}

		\item If $s\cdot g \in F_{m}$ then $z_{s\cdot g} = T^{s\cdot g}x^{\prime} =
		T^{s}(T^{g}x^{\prime}) = T^{s}z_{g}$. 
		
		\item If $s\cdot g \in S\cdot F_{m} \setminus F_{m}$ then $z_{s\cdot g} =
		T^{s\cdot g}y^{\prime}$ and $T^{s}z_{g} =T^{s\cdot g}x^{\prime}$. As
		$d(T^{s\cdot g}x^{\prime},T^{s\cdot g}y^{\prime}) \leq\delta$ we have
		$d(z_{s\cdot g}, T^{s}z_{g}) \leq\delta$. 
	\end{itemize}
	
	If $g \notin F_{m}$ then either $s\cdot g\notin F_{m}$ or $s\cdot g \in F_{m}%
	$. 
	
	\begin{itemize}

		\item If $s\cdot g \notin F_{m}$ then $z_{s\cdot g} = T^{s\cdot g}y^{\prime}
		= T^{s}(T^{g}(y^{\prime})) = T^{s}z_{g}$. 
		
		\item If $s\cdot g \in F_{m}$, $z_{s\cdot g} = T^{s\cdot g}x^{\prime}$ and $T^{s}z_{g} =T^{s\cdot g}y^{\prime}$. As $S = S^{-1}$ we have that $g \in
		S\cdot F_{m} \setminus F_{m}$. Therefore $d(T^{s\cdot g}x^{\prime},T^{s\cdot
			g}y^{\prime}) \leq\delta$ and thus we have $d(z_{s\cdot g}, T^{s}z_{g})
		\leq\delta$. 
	\end{itemize}
	
	By the POTP there exists $z^{\prime}\in X$ that $\varepsilon/2$-traces
	$\left\{   z_{g}\right\}  _{g\in G}.$ We conclude $T^{j}z^{\prime}\in
	B_{\varepsilon}(x)$ and $T^{j}y^{\prime}\in B_{\varepsilon}(y)$. Also, for
	every $g \notin F_{m}$ we have that $d(T^{g}z^{\prime}, T^g y^{\prime})
	\leq\varepsilon/2 + d(z_{g},T^{g}y^{\prime}) = \varepsilon/2$. Therefore
	$(z^{\prime},y^{\prime})$ is $\varepsilon$-asymptotic and thus $(T^{j}%
	z^{\prime},T^j y^{\prime})$ is also $\varepsilon$-asymptotic as we intended to show.
	
	In order to construct the measure having both $x,y$ in its support, let $n
	\in\NN$ and consider an independence set $J_{n}$ for $(B_{\frac{1}{n}%
	}(x),B_{\frac{1}{n}}(y))$. By independence for any $m \in\NN$ sufficiently
	large there is $z_{m}$ such that%
	
	\[
	\min\left\lbrace \frac{|\{g \in F_{m} \mid T^{g}z_{m} \in B_{\frac{1}{n}}(x)
		\}|}{|F_{m}|},\frac{|\{g \in F_{m} \mid T^{g}z_{m} \in B_{\frac{1}{n}}(y)
		\}|}{|F_{m}|}\right\rbrace > \frac{1}{3}D(J_{n}).
	\]

	Let $\nu_{n}$ be an accumulation point in the weak-$*$ topology of the
	sequence $\{ \frac{1}{|F_{m}|} \sum_{g \in F_{m}}\delta_{T^{g}z_{m}}\}_{m
		\in\NN}$. As $\{F_{m}\}_{m \in\NN}$ is a F\o lner sequence, $\nu_{n}$ is an
	invariant probability measure that satisfies
	\[
	\nu_{n} (B_{\frac{1}{n}}(x)) > 0 \mbox{ and } \nu_{n} (B_{\frac{1}{n}}(y)) >
	0.
	\]

	Consider $\nu= \sum_{n \geq1} \frac{1}{2^{n}}\nu_{n}$. For any neighborhood
	$U_{x}$ of $x$, there is $n \in\NN$ such that $B_{\frac{1}{n}}(x) \subset
	U_{x}$ and thus $\nu(U_{x}) \geq \frac{1}{2^{n}}\nu_{n}(U_{x}) > 0$. The same argument holds
	for $y$. Therefore $x,y \in\operatorname{supp}(\nu)$.
	
	\bigskip
	Part (ii). Let $(x,y)\in\R(X,T)$ and $\mu$ an invariant measure
	such that $x,y\in\supp(\mu)$. Let $ \varepsilon>0$, we want to construct an
	independence set for $ (B_{\varepsilon}(x)$,$B_{\varepsilon}(y)).$  Let
	$\delta>0$ and $S\Subset G$ given by the POTP for $\varepsilon/2$ and without
	loss of generality, assume $S = S^{-1}$. Let $\delta^{\prime}\leq\delta$ be
	such that for any $x^{\prime},y^{\prime}\in X$ if $d(x^{\prime},y^{\prime}) <
	\delta^{\prime}$ then for any $s \in S$ $d(T^{s} x^{\prime}, T^{s} y^{\prime})
	\leq\delta/2$. Since $(x,y)\in\R(X,T)$ there exists $K \Subset G$ such that
	for $g \notin K$ $d(T^{g} x, T^{g} y) \leq\delta/2$. Without loss of generality we can assume that $S \subset K$. We  define
	
	%
	
	\[
	Q_{\delta^{\prime}}(x):=\left\{  z\in X:d(T^{g}x,T^{g}z) < \delta^{\prime
	}\text{ } \forall g\in S \cdot K \diagdown K \right\} .
	\]
	Note that $Q_{\delta^{\prime}}(x)$ is a neighborhood of $x$ and therefore
	$\mu(Q_{\delta^{\prime}}(x))>0.$ Since $ \left\{  F_{n}\right\} _{n \in\NN}$ satisfies
	the PET, we can apply it to the indicator function of $Q_{\delta^{\prime}}(x)$
	and infer the existence of $w \in X$ and a positive density set $J\subset G$
	such that $T^{j}w\in Q_{\delta^{\prime}}(x)$ for every $j\in J.$ Using
	Lemma~\ref{lem:Delone_set_condorito} we can extract $H \subset J$ which is $(S
	\cdot K)$-separated and has positive density in a subsequence of $\{F_{n}\}_{n
		\in\NN}$. Let $I\subset H$ be a finite set and $\phi:I\rightarrow\left\{
	x,y\right\}  $ a function. We define the sequence $\{z_{g}\}_{g \in G}$ as
	follows
	\[
	z_{g}:=\left\{
	\begin{array}
	[c]{cc}%
	T^{g\cdot i^{-1}}\phi(i) & \text{if }g \cdot i^{-1} \in S\cdot K \text{ for
		some } i\in I  \\
	T^{g}w & \text{ otherwise} 
	\end{array}
	\right.
	\]
	As $H$ is $(S\cdot K)$-separated, if $gi_{1}^{-1} \in S \cdot K$ and
	$gi_{2}^{-1} \in S \cdot K$ then $i_{1} = i_{2}$, meaning that $\{z_{g}\}_{g
		\in G}$ is well defined. We claim that $\left\{   z_{g}\right\}  _{g\in G}$ is
	an $(S,\delta)$ pseudo-orbit. Let $g \in G$ and $s \in S$:
	
	If there is $i \in I$ such that $g\cdot i^{-1} \in S\cdot K$ we have that
	$z_{g} = T^{g\cdot i^{-1}}\phi(i)$. There are two cases to consider: 
	
	\begin{itemize}

		\item if $g \cdot i^{-1} \in K$, then $s\cdot g \cdot i^{-1} \in S\cdot K$ and
		thus $z_{s \cdot g} = T^{s\cdot g\cdot i^{-1}}\phi(i)$. Therefore
		$d(T^{s}z_{g},z_{s \cdot g}) = 0$. 
		
		\item if $g \cdot i^{-1} \in S\cdot K \diagdown K$ then either $s\cdot g \cdot
		i^{-1} \in S \cdot K$ and the distance is zero or $z_{s \cdot g} = T^{s \cdot
			g}w$. In this case, we have
		\[
		d(T^{s}z_{g}, z_{s \cdot g}) = d(T^{s\cdot g \cdot i^{-1}}\phi(i), T^{s\cdot
			g}w) \leq d(T^{s\cdot g \cdot i^{-1}}\phi(i), T^{s\cdot g \cdot i^{-1}}x) +
		d(T^{s\cdot g \cdot i^{-1}}x, T^{s\cdot g}w).
		\]
		
			On one hand, we have that $d(T^{s\cdot g \cdot i^{-1}}\phi(i), T^{s\cdot g
				\cdot i^{-1}}x)\leq\delta/2$ regardless of the value of $\phi(i)$ as $s\cdot g \cdot i^{-1}
			\notin K$. On the other hand, as $T^{i} w \in Q_{\delta^{\prime}}(x)$ and $g
			\cdot i^{-1} \in S\cdot K \diagdown K$ we have that $d(T^{g\cdot i^{-1}}x,
			T^{g} w)\leq\delta^{\prime}$ and therefore $d(T^{s\cdot g\cdot i^{-1}}x,
			T^{s\cdot g} w)\leq\delta/2$. We conclude that $d(T^{s}z_{g}, z_{s \cdot
				g})\leq\delta$.
		
	\end{itemize}

	Otherwise, there is no $i \in I$ such that $g\cdot i^{-1} \in S\cdot K$ and
	thus $z_{g} = T^{g}w$. Either the same holds for $s\cdot g$ and the distance
	is zero or $s\cdot g \cdot i^{-1} \in S\cdot K$ for some $i \in I$. as $S =
	S^{-1}$, we have that $s\cdot g \cdot i^{-1} \notin K$. Therefore we can use
	the previous argument to show that $d(T^{s}z_{g}, z_{s \cdot g})\leq\delta$.
	
	By the POTP, there exists $z^{\prime}\in X$ that $\varepsilon/2$-traces
	$\left\{  z_{g}\right\}  _{g\in G}.$ Since $S =
	S^{-1}$ and $S \subset K$, we have that $1_G \in S \cdot K$ and thus $z_{i}=\phi(i)$. This implies that
	\[
	z^{\prime}\in\bigcap_{i\in I}T^{i^{-1}}B_{\varepsilon}(\phi(i)).
	\]
	We conclude $H$ is an independence set for $(B_{\varepsilon}(x)$%
	,$B_{\varepsilon}(y)).$ 
\end{proof}

\bigskip The following lemma is known (see Lemma 6.2 in \cite{ChungLi2014}).
We give a proof for completeness.

\begin{lemma}
	\label{lema_expansivo}  Let $(X,T)$ be an expansive $G$-TDS with expansivity
	constant $\varepsilon.$ Then $\R^{\varepsilon/2}(X,T) = \R(X,T).$
\end{lemma}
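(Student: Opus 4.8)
The plan is to prove the two inclusions separately. The inclusion $\R(X,T) \subseteq \R^{\varepsilon/2}(X,T)$ is immediate from Definition~\ref{potp}: an asymptotic pair is by definition $\delta$-asymptotic for every $\delta > 0$, in particular for $\delta = \varepsilon/2$. The whole content is therefore the reverse inclusion, namely that every $\varepsilon/2$-asymptotic pair is in fact asymptotic.

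To that end I would fix $(x,y) \in \R^{\varepsilon/2}(X,T)$ and let $F \Subset G$ be a finite set witnessing $d(T^g x, T^g y) \le \varepsilon/2$ for all $g \notin F$. Given an arbitrary $\delta > 0$, the goal is to show that the set $E_\delta := \{g \in G : d(T^g x, T^g y) > \delta\}$ is finite; this is precisely the assertion that $(x,y)$ is $\delta$-asymptotic, and since $\delta$ is arbitrary it yields $(x,y)\in\R(X,T)$. So I would argue by contradiction, assuming $E_\delta$ is infinite and choosing a sequence $\{g_n\}_{n\in\NN} \subseteq E_\delta$ of pairwise distinct elements.

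The key step is a compactness-plus-expansivity argument. By compactness of $X^2$, after passing to a subsequence we may assume $(T^{g_n}x, T^{g_n}y) \to (x', y')$ for some $x',y' \in X$; since $d(T^{g_n}x, T^{g_n}y) > \delta$ for every $n$, we get $d(x',y') \ge \delta > 0$, hence $x' \neq y'$. By expansivity with constant $\varepsilon$ there exists $h \in G$ with $d(T^h x', T^h y') > \varepsilon$, and continuity of the homeomorphism $T^h$ then forces $d(T^{hg_n}x, T^{hg_n}y) > \varepsilon$ for all sufficiently large $n$. Because the $g_n$ are pairwise distinct, so are the translates $hg_n$, so $hg_n \notin F$ for all large $n$; this exhibits infinitely many elements $g = hg_n \notin F$ with $d(T^g x, T^g y) > \varepsilon > \varepsilon/2$, contradicting the choice of $F$. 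Hence $E_\delta$ is finite.

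I do not expect a genuine obstacle here; the only points requiring a little care are choosing the $g_n$ (and therefore the translates $hg_n$) pairwise distinct, so that cofinitely many of them avoid the finite set $F$, and noticing that expansivity is only needed to produce a single instant of ``large separation'', with continuity then propagating it to an entire tail of the sequence.
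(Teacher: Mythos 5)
Your proof is correct and follows essentially the same route as the paper's: both arguments extract an accumulation point $(x',y')$ of $(T^{g_n}x,T^{g_n}y)$ along the infinite set of ``bad'' times, observe $x'\neq y'$, and derive a contradiction with expansivity using the $\varepsilon/2$-asymptotic hypothesis. The only cosmetic difference is that you apply expansivity to $(x',y')$ first and transfer the separation back to $(x,y)$ by continuity, whereas the paper pushes the $\varepsilon/2$ bound forward to the limit pair and then contradicts expansivity; these are contrapositive formulations of the same step.
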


\begin{proof}
	Assume that there exists $(x,y)\in\R^{\varepsilon/2}(X,T)\diagdown\R(X,T).$
	This implies there exists $\delta>0$ and an infinite set $\left\{
	h_{n}\right\}  _{n\in\mathbb{N}}\subset G$ such that
	\[
	\delta\leq d(T^{h_{n}}x,T^{h_{n}}y)\text{ for all }n\in\mathbb{N}.
	\]

	Let $x^{\prime}$ and $y^{\prime}$ be accumulation points of $\left\{
	T^{h_{n}}x\right\}  _{n\in\mathbb{N}}$ and $\left\{  T^{h_{n}}y\right\}
	_{n\in\mathbb{N}}$ $\ $respectively. We must have that $x^{\prime}\neq
	y^{\prime}.$ Furthermore\ since $(x,y)\in\R^{\varepsilon/2}(X,T)$ we have
	that  for every sequence of finite sets $\left\{  K_{n}\right\}
	_{n\in\mathbb{N}}$  such that $\cup_{n\in\mathbb{N}}K_{n}=G$ and any $m\in$
	$\mathbb{N}$ for  sufficiently large $n\in\mathbb{N}$ we have
	\[
	d(T^{k\cdot h_{n}}x,T^{k\cdot h_{n}}y)\leq\varepsilon/2\text{ for every }k\in
	K_{m}.
	\]
	This implies that
	\[
	d(T^{g}x^{\prime},T^{g}y^{\prime})\leq\varepsilon/2\text{ for every }g\in
	K_{m}.
	\]
	Since this is for any $m\in$ $\mathbb{N}$ we conclude that
	\[
	d(T^{g}x^{\prime},T^{g}y^{\prime})\leq\varepsilon/2\text{ for every }g\in G,
	\]
	which contradicts expansivity.
\end{proof}

Using Lemma~\ref{lema_expansivo} we obtain the following corollaries (in
particular we recover Meyerovitch's result \cite{Meyerovitch2017}). 

\begin{corollary}
	Let $(X,T)$ be an expansive $G$-TDS with POTP and positive topological
	entropy. Then there exists a non-trivial asymptotic pair.
\end{corollary}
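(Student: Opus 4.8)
The plan is to obtain the asymptotic pair by chaining together Theorem~\ref{teoremadelblansharr}, Theorem~\ref{pipeteorema}, and Lemma~\ref{lema_expansivo}. First, since $h_{\text{top}}(T)>0$, part (1) of Theorem~\ref{teoremadelblansharr} provides an entropy pair $(x,y)\in\EP(X,T)$. Because $(X,T)$ has the POTP, part (i) of Theorem~\ref{pipeteorema} applies and yields $(x,y)\in\overline{\R^{\varepsilon}(X,T)}\diagdown\Delta$ for every $\varepsilon>0$; in particular this holds for $\varepsilon=c/2$, where $c$ denotes the expansivity constant of $(X,T)$.

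Next I would upgrade membership in the closure to membership in the set itself. Choose a sequence $(x_{n},y_{n})\in\R^{c/2}(X,T)$ with $(x_{n},y_{n})\to(x,y)$. Since $X$ is compact metric, hence Hausdorff, the set $X^{2}\diagdown\Delta$ is open, and as $(x,y)\notin\Delta$ we conclude that $x_{n}\neq y_{n}$ for all sufficiently large $n$. Fixing one such $n$, the pair $(x_{n},y_{n})$ is a non-trivial $c/2$-asymptotic pair.

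Finally, applying Lemma~\ref{lema_expansivo} with expansivity constant $c$ gives $\R^{c/2}(X,T)=\R(X,T)$, so $(x_{n},y_{n})\in\R(X,T)$ is a genuine non-trivial asymptotic pair, which is exactly what we wanted.

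The proof is essentially bookkeeping that assembles previously established results, so there is no substantial obstacle. The only step that needs a moment's care is the passage from a non-diagonal limit point of $\R^{c/2}(X,T)$ to an honest non-trivial element of that relation, and this is handled simply by the openness of the complement of the diagonal.
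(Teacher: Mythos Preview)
Your proof is correct and follows essentially the same approach as the paper: both combine Theorem~\ref{teoremadelblansharr}, Theorem~\ref{pipeteorema}(i), and Lemma~\ref{lema_expansivo} to produce a non-trivial asymptotic pair. The paper's version is terser, simply noting that $(x,y)\in\overline{\R(X,T)}\diagdown\Delta$ and leaving implicit the extraction of a nearby off-diagonal element of $\R(X,T)$, whereas you spell out this last step explicitly.
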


\begin{proof}
	By Theorem~\ref{teoremadelblansharr} any $G$-TDS with positive topological
	entropy admits an entropy pair $(x,y)$. By Theorem \ref{pipeteorema} and Lemma~\ref{lema_expansivo}
	we get that $(x,y)\in\overline{\R(X,T)}\diagdown\Delta.$
\end{proof}

We remind the reader that $\mathcal{S}$ is the set of all $x \in X$ for which there exists an invariant measure $\mu$ for which $x \in \operatorname{supp}(\mu)$ and $\Delta _{\mathcal{S}}=\left\{ (x,x)\in X^{2}: x \in \mathcal{S} \right\}$ the diagonal of $\mathcal{S}$.

\begin{corollary}\label{corolario_guajalote_mole}
	Let $(X,T)$ be an expansive $G$-TDS with POTP. Then
	\[  
	\overline{\R(X,T) \cap \mathcal{S}^2} \subset \EP(X,T)\cup \Delta \subset \overline{\R(X,T)} \cap \mathcal{S}^2\]
\end{corollary}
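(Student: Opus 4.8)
The plan is to obtain the corollary directly from Theorem~\ref{pipeteorema}, combined with Lemma~\ref{lema_expansivo} (which, under expansivity, lets us replace the auxiliary relations $\R^{\varepsilon}(X,T)$ by $\R(X,T)$) and the fact, noted above, that $\EP(X,T)\cup\Delta$ is a closed subset of $X^{2}$.

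First I would prove the right-hand inclusion $\EP(X,T)\cup\Delta\subset\overline{\R(X,T)}\cap\mathcal{S}^{2}$. Fix an expansivity constant $c$ for $(X,T)$. Given an entropy pair $(x,y)\in\EP(X,T)$, Theorem~\ref{pipeteorema}~(i) provides, for every $\varepsilon>0$, that $(x,y)\in\overline{\R^{\varepsilon}(X,T)}$, and also an invariant measure $\mu$ with $x,y\in\operatorname{supp}(\mu)$; the latter says precisely that $(x,y)\in\mathcal{S}^{2}$. Applying the former with $\varepsilon=c/2$ and invoking Lemma~\ref{lema_expansivo}, which gives $\R^{c/2}(X,T)=\R(X,T)$, we obtain $(x,y)\in\overline{\R(X,T)}$, and hence $(x,y)\in\overline{\R(X,T)}\cap\mathcal{S}^{2}$. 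For a diagonal pair the inclusion reduces to $\Delta\subset\overline{\R(X,T)}$, which is immediate since $\Delta\subset\R(X,T)$ (here one reads $\Delta_{\mathcal{S}}=\mathcal{S}^{2}\cap\Delta$ on the right, matching Theorem~\ref{teorema_ie_es_ep}, since the support condition only produces $\mathcal{S}$-coordinates).

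Next I would prove the left-hand inclusion $\overline{\R(X,T)\cap\mathcal{S}^{2}}\subset\EP(X,T)\cup\Delta$. Since $\EP(X,T)\cup\Delta$ is closed, it is enough to check $\R(X,T)\cap\mathcal{S}^{2}\subset\EP(X,T)\cup\Delta$. So let $(x,y)\in\R(X,T)$ with $x,y\in\mathcal{S}$, and pick invariant measures $\mu_{1},\mu_{2}$ with $x\in\operatorname{supp}(\mu_{1})$ and $y\in\operatorname{supp}(\mu_{2})$. Then $\mu:=\tfrac{1}{2}(\mu_{1}+\mu_{2})$ is an invariant measure with $x,y\in\operatorname{supp}(\mu)=\operatorname{supp}(\mu_{1})\cup\operatorname{supp}(\mu_{2})$, so Theorem~\ref{pipeteorema}~(ii) gives $(x,y)\in\EP(X,T)\cup\Delta$, as desired. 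Note that this half uses only the POTP, not expansivity.

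The substantive ingredients are the choice $\varepsilon=c/2$, which is exactly what lets expansivity collapse $\R^{c/2}(X,T)$ to $\R(X,T)$ through Lemma~\ref{lema_expansivo}, and the averaging trick $\mu=\tfrac{1}{2}(\mu_{1}+\mu_{2})$, which upgrades ``each of $x,y$ lies in the support of some invariant measure'' to ``$x$ and $y$ lie in the support of a common invariant measure''. Everything else, including the bookkeeping of which diagonal ($\Delta$ versus $\Delta_{\mathcal{S}}$) is used on each side, is routine, and I do not expect a serious obstacle once Theorem~\ref{pipeteorema} is available.
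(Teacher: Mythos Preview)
Your proof is correct and follows essentially the same route as the paper's: Theorem~\ref{pipeteorema}(i) together with Lemma~\ref{lema_expansivo} for the right-hand inclusion, and Theorem~\ref{pipeteorema}(ii) together with the closedness of $\EP(X,T)\cup\Delta$ for the left-hand one. You are in fact more careful than the paper on two points it leaves implicit---the averaging $\mu=\tfrac{1}{2}(\mu_1+\mu_2)$ needed to pass from $(x,y)\in\mathcal{S}^2$ to a \emph{single} invariant measure supporting both $x$ and $y$, and the observation that the diagonal on the right can only be $\Delta_{\mathcal{S}}$ rather than the full $\Delta$.
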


\begin{proof}
	By the first part of Theorem \ref{pipeteorema} and Lemma~\ref{lema_expansivo} we have that $\EP(X,T)\cup \Delta \subset \overline{\R(X,T)} \cap \mathcal{S}^2$. By the second part we obtain that $\R(X,T) \cap \mathcal{S}^2 \subset \EP(X,T)\cup \Delta$. As $\EP(X,T)\cup \Delta$ is closed we get $\overline{\R(X,T) \cap \mathcal{S}^2} \subset \EP(X,T)\cup \Delta$. 
\end{proof}

In general, if $(X,T)$ does not admit a fully supported measure the inclusions can be strict.

\begin{example}
	Let $X \subset \{ \blacktriangleleft,0,1,\blacktriangleright \}^{\ZZ}$ be the set of all configurations where the patterns $\{\blacktriangleright 0, \blacktriangleright 1, \blacktriangleright \blacktriangleleft, 0 \blacktriangleleft, 1\blacktriangleleft, 01, 10 \}$ do not appear. The subshift $(X,\sigma)$ is an expansive $\ZZ$-TDS with the POTP. It is easy to see that $\mathcal{S} = \{\blacktriangleleft^{\ZZ}, 0^{\ZZ}, 1^{\ZZ}, \blacktriangleright^{\ZZ}\}$ and thus that $\overline{\R(X,T) \cap \mathcal{S}^2} = \Delta_{\mathcal{S}}$. On the other hand we have $\mathcal{S}^2 \subset \overline{\R(X,T)}$ and thus $\overline{\R(X,T)} \cap \mathcal{S}^2 = \mathcal{S}^2$.

\end{example}

\begin{corollary}
	\label{corolario_delaescalerita} Let $(X,T)$ be an expansive $G$-TDS with POTP which admits a fully supported invariant measure. Then for every ordinal $\alpha > 0$ we have
	$\EP_{\alpha}(X,T)=\R_{\alpha}(X,T)$.
\end{corollary}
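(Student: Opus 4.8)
The plan is to reduce the statement to Corollary~\ref{corolario_guajalote_mole} and then propagate the identification up the two hierarchies by transfinite induction. Since $(X,T)$ admits a fully supported invariant measure $\mu$, we have $\supp(\mu)=X$, so every point of $X$ lies in the support of some invariant measure; that is, $\mathcal{S}=X$ and hence $\mathcal{S}^2=X^2$. Substituting this into Corollary~\ref{corolario_guajalote_mole} collapses the chain $\overline{\R(X,T)\cap\mathcal{S}^2}\subset\EP(X,T)\cup\Delta\subset\overline{\R(X,T)}\cap\mathcal{S}^2$ to the single equality $\overline{\R(X,T)}=\EP(X,T)\cup\Delta=\EP_1(X,T)$. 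In particular, $\EP_1(X,T)$ is exactly the topological closure of $\R_0(X,T)=\R(X,T)$, and it is closed.

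First I would settle the base case $\R_1(X,T)=\EP_1(X,T)$ with a short case split. If $\R(X,T)$ is not closed, then by definition $\R_1(X,T)=\overline{\R_0(X,T)}=\overline{\R(X,T)}=\EP_1(X,T)$. If $\R(X,T)$ is closed, then $\R(X,T)=\overline{\R(X,T)}=\EP_1(X,T)$, and since $\R(X,T)$ is already an equivalence relation we also get $\R_1(X,T)=\R_0(X,T)^{+}=\R(X,T)=\EP_1(X,T)$. Either way $\R_1(X,T)=\EP_1(X,T)$.

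The remaining step is a transfinite induction on $\alpha\geq 1$ with hypothesis ``$\R_\beta(X,T)=\EP_\beta(X,T)$ for all $1\leq\beta<\alpha$''. The key observation is that, from index $1$ onward, the recursive clauses defining the two families are literally the same operation applied to the previous term: at a successor one takes the topological closure of the previous set if that set is not closed and its smallest-containing-equivalence-relation otherwise, and at a limit one takes the increasing union of all earlier terms. Both families are increasing, and in particular $\R_0(X,T)\subset\R_1(X,T)$, so at a limit ordinal $\alpha$ the extra index $0$ appearing in the union defining $\R_\alpha(X,T)$ is absorbed and $\R_\alpha(X,T)=\bigcup_{\beta<\alpha}\R_\beta(X,T)=\bigcup_{1\leq\beta<\alpha}\R_\beta(X,T)=\bigcup_{1\leq\beta<\alpha}\EP_\beta(X,T)=\EP_\alpha(X,T)$. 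At a successor $\alpha=\beta+1$, the inductive hypothesis gives a common set $\R_\beta(X,T)=\EP_\beta(X,T)$; whether this set is closed is a property of the set alone, so applying the corresponding clause yields $\R_\alpha(X,T)=\EP_\alpha(X,T)$. This closes the induction and proves $\EP_\alpha(X,T)=\R_\alpha(X,T)$ for every ordinal $\alpha>0$. I expect the only genuine point of care to be the bookkeeping around the mismatched starting indices of the two hierarchies ($\R$ is seeded at $0$, $\EP$ at $1$), which is disposed of by the monotonicity remark $\R_0(X,T)\subset\R_1(X,T)$; everything else is a direct unwinding of Corollary~\ref{corolario_guajalote_mole} and of the definitions.
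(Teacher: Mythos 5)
Your proposal is correct and takes essentially the same route as the paper: specialize Corollary~\ref{corolario_guajalote_mole} to $\mathcal{S}=X$ to get $\overline{\R(X,T)}=\EP(X,T)\cup\Delta=\EP_1(X,T)$, and then push the identification up both hierarchies. You simply make explicit the transfinite induction (the closed/not-closed base-case split and the bookkeeping for the offset starting indices) that the paper compresses into the phrase ``by definition of both hierarchies we deduce''.
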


\begin{proof}
	If $(X,T)$ admits a fully supported invariant measure then $\mathcal{S}=X$. Using Corollary~\ref{corolario_guajalote_mole} we obtain that $\overline{\R(X,T)} = \EP(X,T)\cup \Delta$. By definition of both hierarchies we deduce that $\EP_{\alpha}(X,T)=\R_{\alpha}(X,T)$ for every ordinal $\alpha >0$.
\end{proof}

\section{The asymptotic hierarchy for subshifts}

A particularly interesting class of expansive $G$-TDSs are subshifts. Given a
finite alphabet $\mathcal{A}$ and $X\subset\mathcal{A}^{G}$ we say $X$ is a
\textbf{subshift} if it is closed under the product topology and invariant under the left shift action $\sigma$ defined by $\sigma^{g}(x)_{h}=x_{g^{-1}h}$. We denote by $L(X)$ the \textbf{language} of $X$ and for a $F \Subset G$ we
write $L_{F}(X) = \mathcal{A}^{F} \cap L(X)$ for the set of patterns appearing
in $X$ with support $F$. We say that a subshift is of \textbf{finite type (SFT)} if it is equal to the complement of the shift-closure of a finite union of cylinders. We say it is a \textbf{sofic} subshift if it is a factor of an SFT, that is, the image of an SFT under a $G$-equivariant continuous map.

In the case where $X$ is a subshift one can characterize asymptotic pairs as
follows: $(x,y)\in X^{2}$ is an asymptotic pair if and only if there exists
$F\Subset G$ such that $x|_{G\setminus F}=y|_{G\setminus F}$.

\begin{example}
	The sunny side up subshift $X_{\leq 1}$ is in the asymptotic class $0$. 
	\[
	X_{\leq1} = \{x \in\{0,1\}^{\mathbb{Z}} \mid1 \in\{x_{n},x_{m}\} \implies n =
	m\}.
	\]
\end{example}

A subshift in the asymptotic class $0$ is extremely simple. It consists of a
single asymptotic class and thus it is either finite or countable and contains a unique uniform configuration. 
In particular, the only SFT with a fully supported measure satisfying this
property is the one consisting of a unique uniform configuration. 

As the following example shows, constructing examples in the asymptotic class 1 is quite simple. 
\begin{example}
\label{example_full_shift_class_1} Let $X = \{0,1\}^{G}$ be the full
$2$-shift. The fixed points $0^{G}$ and $1^{G}$
are obviously not asymptotic, but it is easy to see that $\R(X,T)$ is dense in
$X^{2}$.
\end{example}

In what follows, we will describe the second and third level of the asymptotic hierarchy in the case where $X$ is a subshift. For that, we will
need a few definitions.

\begin{definition}
	Let $X \subset\mathcal{A}^{G}$ be a subshift, $F\Subset G$ and $p,q \in
	L_{F}(X)$. We say $p,q$ are \textbf{exchangeable} if there exists an asymptotic pair
	$(x,y)$ such that $x|_{F} = p$ and $y|_{F} = q$.
\end{definition}

\begin{proposition}\label{proposotion_trivial_clase1}
A subshift $X$ has asymptotic class at most $1$ if and only if for every $F
\Subset G$ every pair $p,q \in L_{F}(X)$ is exchangeable.
\end{proposition}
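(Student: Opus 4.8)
The plan is to unwind the definition of ``asymptotic class at most $1$'' in terms of the relations $\R_0 = \R(X,T)$ and $\R_1$. By definition $X$ has asymptotic class at most $1$ precisely when $\R_1(X,T) = X^2$; since $\R_0 = \R(X,T)$ is already an equivalence relation (stated in the excerpt right after Definition \ref{potp}), the successor step in the hierarchy takes the closure: $\R_1(X,T) = \overline{\R(X,T)}$ if $\R(X,T)$ is not already closed, and $\R_1(X,T)=\R(X,T)$ otherwise. In either case $\R_1(X,T) = \overline{\R(X,T)}$. So the statement to prove is: $\overline{\R(X,T)} = X^2$ if and only if every pair $p,q \in L_F(X)$ is exchangeable for every $F \Subset G$.

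For the forward direction, assume $\overline{\R(X,T)} = X^2$ and fix $F\Subset G$ and $p,q\in L_F(X)$. Pick $x,y\in X$ with $x|_F = p$ and $y|_F=q$; these exist because $p,q$ are in the language. Since $(x,y)\in X^2 = \overline{\R(X,T)}$, there is a sequence $(x^{(n)},y^{(n)})\in\R(X,T)$ converging to $(x,y)$ in $X^2$. Convergence in the product topology on $\xspace{A}^G$ means that for $n$ large enough $x^{(n)}|_F = x|_F = p$ and $y^{(n)}|_F = y|_F = q$. Fixing such an $n$, the pair $(x^{(n)},y^{(n)})$ is asymptotic and agrees with $p$ and $q$ respectively on $F$, so $p$ and $q$ are exchangeable by definition.

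For the converse, assume every pair of patterns on every finite support is exchangeable; I want to show $\overline{\R(X,T)} = X^2$. Take an arbitrary $(x,y)\in X^2$ and an arbitrary basic open neighborhood of it, which we may take of the form $[p]_F\times[q]_F$ where $F\Subset G$, $p = x|_F$, $q = y|_F$, and $[\cdot]_F$ denotes the cylinder determined by a pattern on $F$. By hypothesis $p$ and $q$ are exchangeable, so there is an asymptotic pair $(x',y')$ with $x'|_F = p$ and $y'|_F = q$, i.e.\ $(x',y')\in\R(X,T)\cap([p]_F\times[q]_F)$. Hence every neighborhood of $(x,y)$ meets $\R(X,T)$, so $(x,y)\in\overline{\R(X,T)}$; as $(x,y)$ was arbitrary, $\overline{\R(X,T)} = X^2$, i.e.\ $\R_1(X,T) = X^2$, which is exactly ``asymptotic class at most $1$.''

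The argument is essentially a translation exercise, so there is no serious obstacle; the only point that needs a word of care is the identification $\R_1(X,T) = \overline{\R(X,T)}$ regardless of whether $\R(X,T)$ happens to be closed (both branches of the successor clause give the closure because $\R(X,T)$ is already an equivalence relation), together with the standard fact that the cylinders $[p]_F$ over patterns $p\in L_F(X)$, $F\Subset G$, form a basis for the topology of the subshift — this is what lets convergence and density be read off coordinate-wise on finite windows. One might also remark that ``asymptotic class at most $1$'' literally means $X$ is in asymptotic class $0$ or $1$, and the case $\R_0 = X^2$ (class $0$) is subsumed since then trivially $\overline{\R(X,T)} = X^2$ as well.
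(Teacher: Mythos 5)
Your proposal is correct and follows essentially the same route as the paper's proof: the forward direction extracts a convergent sequence of asymptotic pairs and restricts to the window $F$, and the converse shows density of $\R(X,\sigma)$ by producing asymptotic pairs agreeing with $(x,y)$ on larger and larger finite windows (the paper phrases this with an exhausting sequence $F_n \nearrow G$ rather than with basic cylinder neighborhoods, which is an immaterial difference). Your preliminary remark that $\R_1(X,T) = \overline{\R(X,T)}$ in both branches of the successor clause is a correct and worthwhile clarification that the paper leaves implicit.
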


\begin{proof}
	Suppose $X$ has asymptotic class at most $1$, that is, $\overline{\R(X,\sigma)} = X^2$. Given $p,q \in L_F(X)$ choose $(x,y)\in X^2$ such that $x|_{F} = p$ and $y|_{F}=q$. By definition there is $\{(x^n,y^n)\}_{n \in \NN} \subset \R(X,\sigma)$ converging to $(x,y)$. Choosing a sufficiently large $n$ such that $x^n|_{F}=x|_{F} = p$ and $y^n|_{F}=y|_{F} = q$ gives an asymptotic pair for $p,q$, therefore they are exchangeable.
	Conversely, Let $\{F_n\}_{n\in \NN} \nearrow G$ be an increasing sequence of finite subsets of $G$ and for $(x,y) \in X^2$ let $p_n := x|_{F_n}$ and $q_n := y|_{F_n}$. As every pair of patterns is exchangeable, there is an asymptotic pair $(x^n,y^n) \in \R(X,\sigma)$ for $(p_n,q_n)$. As $\{F_n\}_{n\in \NN} \nearrow G$ we have that $(x^n,y^n)$ converges to $(x,y)$ as $n$ goes to infinity, therefore $(x,y) \in \overline{\R(X,\sigma)}$.
\end{proof}

\begin{definition}
	Let $F \Subset G$ and $p,q \in\mathcal{A}^{F}$ two patterns. We say $p,q$ are \textbf{$n$-chain exchangeable} if there exists a sequence of patterns
	$r_{0},r_{1},\dots,r_{n} \in\mathcal{A}^{F}$ such that $p = r_{0}$, $q =
	r_{n}$ and $r_{i-1},r_{i}$ are exchangeable for every $i \in\{1,\dots,n\}$. We
	say that $p,q$ are \textbf{chain exchangeable} if they are $n$-chain exchangeable for
	some $n \in\mathbb{N}$.
\end{definition}

\begin{definition}
	We say that a subshift $X$ has \textbf{chain exchangeability (CE)} if every pair of
	patterns in $L(X)$ over the same support is chain exchangeable. Furthermore, We say that
	$X$ has \textbf{bounded chain exchangeability (BCE)} if there exists a uniform constant
	$N \in\mathbb{N}$ such that every pair of patterns in $L(X)$ over the same
	support is $N$-chain exchangeable.
\end{definition}

\begin{theorem}[\cite{Pavlov2013},\cite{Pavlov2017}]\label{teorema_delronniedance}
	Let $X$ be a $\ZZ^d$-SFT admitting a fully supported invariant measure. 
	\begin{enumerate}[(i)]
		\item $X$ has chain exchangeability if and only if every non-trivial zero-dimensional factor of $X$ has positive topological entropy.
		\item If $X$ has bounded chain exchangeability then it has topological CPE.
	\end{enumerate}
\end{theorem}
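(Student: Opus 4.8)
The plan is to derive Theorem~\ref{teorema_delronniedance} entirely from the machinery built in Section~\ref{section_teofel}, using the dictionary between asymptotic pairs and exchangeable patterns together with Corollary~\ref{corolario_delaescalerita}. Since $X$ is a $\ZZ^d$-SFT with a fully supported invariant measure, it is expansive with the POTP and $\mathcal{S} = X$, so Corollary~\ref{corolario_delaescalerita} gives $\EP_\alpha(X,\sigma) = \R_\alpha(X,\sigma)$ for every ordinal $\alpha > 0$. In particular, by Theorem~\ref{teoremadelblansharr}(3) and Proposition~\ref{prop_onichan}, $X$ has topological CPE if and only if it lies in some asymptotic class $\alpha$, i.e. $\R_\alpha(X,\sigma) = X^2$ for some countable $\alpha$.

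For part (i), I would first reduce the statement about zero-dimensional factors to a statement about symbolic factors: a factor of $X$ that is zero-dimensional is (topologically conjugate to) a subshift, and by Lemma~\ref{lemadelencaje} (with its asymptotic-pair version, Remark~\ref{remark_delencaje}) factors cannot decrease the asymptotic/entropy structure. The key translation is Proposition~\ref{proposotion_trivial_clase1}: a subshift has asymptotic class at most $1$ iff every pair of patterns over the same support is exchangeable. Now I claim $X$ has chain exchangeability iff $\R_\alpha(X,\sigma) = X^2$ for some $\alpha$. The forward direction: if every pair of patterns is $n$-chain exchangeable for some $n$ depending on the pair, then iterating the argument of Proposition~\ref{proposotion_trivial_clase1} through the transitive closure $\R(X,\sigma)^+$ followed by one topological closure shows $\R_3(X,\sigma) = X^2$ (in fact one sees directly that $\overline{\R(X,\sigma)^+} = X^2$). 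The converse is the more delicate direction: if $\R_\alpha(X,\sigma) = X^2$, one needs to show CE; this is where one uses that being in the transitive closure of the asymptotic relation, restricted to a finite window, is a closed condition on patterns (the set of pairs $(x,y)$ that are $n$-chain exchangeable on a fixed finite support, for some finite $n$, already exhausts $\overline{\R(X,\sigma)^+}$ restricted to that window), so no genuine topological closures beyond level $3$ can add new pattern-pairs. Finally, "every non-trivial zero-dimensional factor has positive entropy" is equivalent, via Theorem~\ref{teoremadelblansharr} and a factor argument, to $X$ itself having the property that its symbolic-factor structure forces positive entropy, which Pavlov's CE condition captures; combining with Corollary~\ref{corolario_delaescalerita} closes the loop.

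For part (ii), the point is quantitative. If $X$ has BCE with uniform constant $N$, then for \emph{every} finite support $F$ and every pair $p,q \in L_F(X)$ there is a chain of length at most $N$ of exchangeable patterns joining them. Running the argument of Proposition~\ref{proposotion_trivial_clase1} along this bounded chain, one exhibits, for any $(x,y) \in X^2$, a point in $\overline{\R_1(X,\sigma)^{(N)}}$ — that is, $N$ applications of "compose asymptotic pairs'' followed by a single limit — equal to $(x,y)$. Concretely this shows $\R_\alpha(X,\sigma) = X^2$ for some small fixed $\alpha$ (one checks $\alpha = 3$ suffices, since the $N$-fold composition is still a single step of the transitive closure), hence $X$ is in some asymptotic class, hence $\EP_\alpha(X,\sigma) = X^2$ by Corollary~\ref{corolario_delaescalerita}, hence $X$ has topological CPE by Theorem~\ref{teoremadelblansharr}(3).

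The main obstacle I anticipate is the converse of part (i): showing that having the smallest closed equivalence relation containing $\R(X,\sigma)$ equal to $X^2$ actually implies chain exchangeability, rather than merely some transfinite version of it. The crux is a compactness/finiteness observation — that for a subshift the closure operations, when read off on any single finite window, collapse after finitely many transitive steps and a single topological closure, because $L_F(X)$ is finite — so that "eventually in the closed equivalence relation'' and "chain exchangeable'' coincide on patterns. Making this precise, and in particular ruling out that limit ordinals are needed, is the technical heart; everything else is bookkeeping with Lemma~\ref{lemadelencaje}, Proposition~\ref{proposotion_trivial_clase1}, and Corollary~\ref{corolario_delaescalerita}.
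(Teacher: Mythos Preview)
The paper does not give its own proof of Theorem~\ref{teorema_delronniedance}; it is quoted from Pavlov's work. What the paper \emph{does} do is re-derive part~(ii) (generalized to countable amenable groups) via Proposition~\ref{prop_roniewin} and Corollary~\ref{corolario_delaescalerita}. Your approach to part~(ii) is exactly this route, so on that half you agree with the paper. One small sharpening: Proposition~\ref{prop_roniewin} shows BCE gives $\R_2(X,\sigma)=X^2$, not merely $\R_3$; the point is that the $N$-fold composition of $\overline{\R(X,\sigma)}$ already sits inside $(\overline{\R(X,\sigma)})^+=\R_2$, so no extra closure is needed.

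Your approach to part~(i), however, has a genuine gap. You are treating ``every non-trivial zero-dimensional factor has positive entropy'' as if it were equivalent to topological CPE, and then trying to prove CE $\iff$ $\R_\alpha(X,\sigma)=X^2$ for some $\alpha$. But these are different conditions: part~(i) speaks only of \emph{symbolic} factors, whereas topological CPE (and hence the hierarchy $\R_\alpha=X^2$ via Corollary~\ref{corolario_delaescalerita}) constrains \emph{all} factors. This is precisely why Pavlov needed the stronger BCE, not just CE, for topological CPE in part~(ii). Concretely, your ``forward direction'' (CE $\Rightarrow \overline{\R(X,\sigma)^+}=X^2$) fails: a chain of exchangeable \emph{patterns} $r_0,\dots,r_{N(m)}$ on a window $F_m$ does not lift to a chain of asymptotic \emph{configurations}, because the witnessing pairs $(a_i,b_i)$ for consecutive exchanges only satisfy $b_i|_{F_m}=a_{i+1}|_{F_m}$, not $b_i=a_{i+1}$. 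The compactness trick in Proposition~\ref{prop_roniewin}---taking a limit in the fixed product $X^{2N}$---is exactly what requires the uniform bound $N$; with $N(m)\to\infty$ there is no finite-dimensional space in which to take the limit. So CE alone does not place $X$ in any asymptotic class, and the machinery of Section~\ref{section_teofel} does not recover part~(i). Pavlov's original argument for (i) works directly with symbolic factors and does not pass through the entropy-pair/asymptotic hierarchy.
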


In~\cite{Pavlov2017} Pavlov asked whether every $\ZZ^d$-SFT that has topological CPE must satisfy bounded chain exchangeability. Theorem~\ref{teoremabonito} answers that question negatively. 

\begin{proposition}\label{prop_roniewin}
If a $G$-subshift $X$ has bounded chain exchangeability then $\R_2(X,\sigma) = X^2$. That is, it is either in the asymptotic class $0$, $1$ or $2$.
\end{proposition}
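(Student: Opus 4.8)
The plan is to show that bounded chain exchangeability forces every pair of patterns to be connected by a chain of uniformly bounded length, and then to push this through two rounds of the hierarchy operations (transitive closure, then topological closure). First I would unwind the definitions: if $X$ has BCE with constant $N$, then for every $F \Subset G$ and every $p,q \in L_F(X)$ there is a chain $p = r_0, r_1, \dots, r_N$ of patterns in $\mathcal{A}^F \cap L(X)$ such that consecutive $r_{i-1}, r_i$ are exchangeable, i.e. there is an asymptotic pair witnessing each link. The key point is that exchangeability of two patterns gives a genuine asymptotic pair in $\R(X,\sigma) = \R_0(X,\sigma)$, not merely a pair in some later stage of the hierarchy.

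Next I would argue that $\R_0(X,\sigma)^+$, the transitive closure of the asymptotic relation, already captures all pairs of configurations that agree with some ``chain-reachable'' configuration. Concretely, given $(x,y) \in X^2$, I want to approximate it by pairs that are chains of $\le N$ asymptotic steps. Fix an exhaustion $\{F_k\}_{k \in \NN} \nearrow G$. For each $k$, apply BCE to $p := x|_{F_k}$ and $q := y|_{F_k}$ to obtain patterns $r_0^{(k)}, \dots, r_N^{(k)} \in \mathcal{A}^{F_k} \cap L(X)$ with $r_0^{(k)} = p$, $r_N^{(k)} = q$ and consecutive patterns exchangeable. For each link choose an asymptotic pair in $X^2$ realizing it; by picking configurations that restrict to $r_i^{(k)}$ on $F_k$ and stitching, one obtains configurations $z_0^{(k)} = x^{(k)}, z_1^{(k)}, \dots, z_N^{(k)} = y^{(k)}$ with $(z_{i-1}^{(k)}, z_i^{(k)}) \in \R(X,\sigma)$ for each $i$, $x^{(k)}|_{F_k} = x|_{F_k}$, and $y^{(k)}|_{F_k} = y|_{F_k}$. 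By transitivity of $\R_1(X,\sigma) = \R_0(X,\sigma)^+$, we get $(x^{(k)}, y^{(k)}) \in \R_1(X,\sigma)$ for every $k$. Since $x^{(k)} \to x$ and $y^{(k)} \to y$ as $k \to \infty$ (they agree on the exhausting sets $F_k$), we conclude $(x,y) \in \overline{\R_1(X,\sigma)} = \R_2(X,\sigma)$. As $(x,y)$ was arbitrary, $\R_2(X,\sigma) = X^2$, hence $X$ is in the asymptotic class $0$, $1$, or $2$.

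The one subtlety — and the step I expect to require the most care — is the stitching argument: showing that from exchangeability of each consecutive pair $r_{i-1}^{(k)}, r_i^{(k)}$ of patterns on $F_k$ one can build a single chain of \emph{configurations} $x^{(k)} = z_0^{(k)}, \dots, z_N^{(k)} = y^{(k)}$ in $X^2$ with each consecutive pair genuinely asymptotic and with the endpoints restricting correctly to $x|_{F_k}$ and $y|_{F_k}$. The definition of exchangeable only guarantees, for each $i$, \emph{some} asymptotic pair $(u_i, v_i)$ with $u_i|_{F_k} = r_{i-1}^{(k)}$ and $v_i|_{F_k} = r_i^{(k)}$; these need not match up outside $F_k$ between consecutive links. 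But this is harmless: each individual link $(u_i, v_i) \in \R(X,\sigma)$, so the chain $u_1, v_1$ (jump), $u_2, v_2$ (jump), $\dots$ lies in $\R_1(X,\sigma)$ already if we only track that the transitive closure connects $u_1$ to $v_N$ — however $v_i$ and $u_{i+1}$ need not be equal. The clean fix is to note that for subshifts, since $v_i|_{F_k} = r_i^{(k)} = u_{i+1}|_{F_k}$, the pair $(v_i, u_{i+1})$ differs only outside $F_k$, but that does not make it asymptotic. So instead I would extend each link to agree with a fixed reference configuration: realize exchangeability of $r_{i-1}^{(k)}, r_i^{(k)}$ by an asymptotic pair and then, since asymptotic pairs are preserved under modifications on finite sets only in limited ways, it is cleaner to directly invoke Proposition~\ref{proposotion_trivial_clase1}-style reasoning iteratively. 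The honest route: set $x^{(k)}$ to be any configuration with $x^{(k)}|_{F_k} = p$; inductively, having $z_{i-1}^{(k)}$, use exchangeability of $r_{i-1}^{(k)}, r_i^{(k)}$ together with the characterization of asymptotic pairs for subshifts ($(x,y)$ asymptotic iff they agree off a finite set) to produce $z_i^{(k)}$ agreeing with $z_{i-1}^{(k)}$ off a finite set and with $z_i^{(k)}|_{F_k} = r_i^{(k)}$ — this is possible because one can modify $z_{i-1}^{(k)}$ on a finite window containing $F_k$ to realize the pattern $r_i^{(k)}$ there, using that the asymptotic pair witnessing exchangeability shows such a finite modification stays inside $X$. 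Then $(z_{i-1}^{(k)}, z_i^{(k)}) \in \R(X,\sigma)$, transitivity gives $(x^{(k)}, y^{(k)}) \in \R_1(X,\sigma)$ with $y^{(k)} = z_N^{(k)}$ satisfying $y^{(k)}|_{F_k} = q$, and the limit argument finishes the proof as above.
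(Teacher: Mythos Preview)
Your approach has a genuine gap in two related places.

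First, you misidentify the hierarchy operations. The asymptotic relation $\R_0(X,\sigma) = \R(X,\sigma)$ is already an equivalence relation (as the paper notes explicitly), so $\R_0^+ = \R_0$. Hence $\R_1$ is \emph{not} the transitive closure of $\R_0$; by the hierarchy definition, in the nontrivial case $\R_1 = \overline{\R_0}$ and then $\R_2 = (\overline{\R_0})^+$. So the order is closure first, transitive closure second --- the reverse of what you wrote.

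Second, and more seriously, the stitching argument you propose in the last paragraph does not work, and if it did it would prove too much. You want to build, inductively, $z_i^{(k)}$ asymptotic to $z_{i-1}^{(k)}$ with $z_i^{(k)}|_{F_k} = r_i^{(k)}$, arguing that ``the asymptotic pair witnessing exchangeability shows such a finite modification stays inside $X$.'' But exchangeability only guarantees \emph{some} asymptotic pair $(u,v)$ with $u|_{F_k} = r_{i-1}^{(k)}$ and $v|_{F_k} = r_i^{(k)}$; it says nothing about whether \emph{your} $z_{i-1}^{(k)}$, which may differ from $u$ outside $F_k$, admits a finite modification to a point of $X$ with pattern $r_i^{(k)}$. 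If this stitching were possible, then by transitivity of the asymptotic relation you would get $(x^{(k)},y^{(k)}) \in \R_0$, hence $(x,y) \in \overline{\R_0} = \R_1$, i.e.\ BCE would force asymptotic class $\le 1$. This is false: Pavlov's $\ZZ^2$-SFT (Corollary~\ref{corolario_elronniedance}) has BCE and asymptotic class exactly $2$.

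The paper's fix is precisely to \emph{not} stitch. For each link it takes the witnessing asymptotic pair $(z^{(n,i)}, \tilde z^{(n,i+1)})$ as given, allowing the chain of configurations to be ``broken'' at each node: $z^{(n,i)}$ and $\tilde z^{(n,i)}$ are different configurations that merely agree on $F_n$. One then passes to a convergent subsequence in $X^{2N}$ by compactness. Since $F_n \nearrow G$, the limits $z^i$ and $\tilde z^i$ coincide, so in the limit the chain becomes connected; and each link, being a limit of asymptotic pairs, lies in $\overline{\R_0} = \R_1$. Thus $(x,y)$ is an $N$-fold composite of pairs in $\R_1$, landing in $\R_1^+ = \R_2$.
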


\begin{proof}
	Suppose $X$ has BCE and let $N$ be the associated constant. Given $(x,y)\in X^2$ consider an increasing sequence $\{F_n\}_{n\in \NN}\nearrow G$ of finite subsets of $G$ and the patterns $p_n = x|_{F_n}$ and $q_n = y|_{F_n}$. By BCE, there are patterns $r^n_0,\dots r^n_N$ such that: $$p_n = r^n_0, q_n = r^n_N \mbox{ and } r_i,r_{i+1} \mbox{ are exchangable.}$$
	We can therefore find configurations $z^{(n,0)},\dots,z^{(n,N-1)}$ and $\tilde{z}^{(n,1)},\dots,\tilde{z}^{(n,N)}$ such that $(z^{(n,i)},\tilde{z}^{(n,i+1)}) \in \R(X,\sigma)$ and $z^{(n,0)}|_{F_n} = r^n_0$, $\tilde{z}^{(n,N)}|_{F_n} = r^n_N$ and for $i \in \{1,\dots,N-1\}$ $z^{(n,i)}|_{F_n} = \tilde{z}^{(n,i)}|_{F_n} = r^n_i$. By sequential compactness of $X^{2N-2}$, the sequence $\{(z^{(n,0)},\dots,z^{(n,N-1)}, \tilde{z}^{(n,1)},\dots,\tilde{z}^{(n,N)})\}_{n\in \NN}$ admits a subsequence which converges to some $(z^0,\dots,z^{N-1},\tilde{z}^1,\dots,\tilde{z}^N) \in X^{2N-1}$. By definition of the sequence and the fact that we chose $\{F_n\}_{n\in \NN}\nearrow G$, it is clear that $x = z^0$ and $y = \tilde{z}^N$ and for $i \in \{1,\dots,N-1\}$ $z^i = \tilde{z}^i$. Moreover, as $(z^{(n,i)},\tilde{z}^{(n,i+1)}) \in \R(X,\sigma)$ we have that $(z^i,\tilde{z}^{i+1}) \in \overline{\R(X,\sigma)}$. Combining these two facts we obtain that $(x,y) \in (\overline{\R(X,\sigma)})^N \subset (\overline{\R(X,\sigma)})^+$. As $(x,y)$ was arbitrary and $(\overline{\R(X,\sigma)})^+ = \R_2(X,\sigma)$ we conclude $\R_2(X,\sigma) = X^2$.
\end{proof}

Using Theorem~\ref{pipeteorema} and Proposition~\ref{prop_roniewin} we generalize Theorem~\ref{teorema_delronniedance} part (ii) for arbitrary countable amenable groups.

\begin{corollary}
	Let $X$ be a $G$-SFT admitting a fully supported measure. If $X$ has bounded chain exchangeability, then it has topological CPE.
\end{corollary}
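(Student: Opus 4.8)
The plan is to assemble this from Proposition~\ref{prop_roniewin}, which controls the asymptotic hierarchy of an SFT with BCE, and the second part of Theorem~\ref{pipeteorema}, which in the presence of a fully supported measure pushes asymptotic pairs into the set of entropy pairs. First I would record two standing facts: since $X$ is an SFT it has the POTP, and since $X$ admits a fully supported invariant measure $\mu$ we have $\mathcal{S} = X$, so every point of $X$ lies in $\supp(\mu)$. Hence Theorem~\ref{pipeteorema}(ii) applies to every pair $(x,y)\in\R(X,\sigma)$ and gives $(x,y)\in\EP(X,\sigma)\cup\Delta$; that is, $\R(X,\sigma)\subseteq\EP(X,\sigma)\cup\Delta=\EP_{1}(X,\sigma)$.

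Next I would lift this inclusion one level up the hierarchy. Since the set of entropy pairs together with the diagonal is closed, $\EP_{1}(X,\sigma)$ is closed, so $\overline{\R(X,\sigma)}\subseteq\EP_{1}(X,\sigma)$; applying the (monotone) transitive-closure operator to both sides yields $(\overline{\R(X,\sigma)})^{+}\subseteq\EP_{1}(X,\sigma)^{+}=\EP_{2}(X,\sigma)$, the last equality because $\EP_{1}$ is closed. By Proposition~\ref{prop_roniewin}, BCE gives $\R_{2}(X,\sigma)=(\overline{\R(X,\sigma)})^{+}=X^{2}$, and therefore $\EP_{2}(X,\sigma)=X^{2}$. (Alternatively one can bypass this paragraph entirely: subshifts are expansive, so Corollary~\ref{corolario_delaescalerita} already gives $\EP_{2}(X,\sigma)=\R_{2}(X,\sigma)$, and Proposition~\ref{prop_roniewin} finishes.)

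Finally, $\EP_{2}(X,\sigma)=X^{2}$ means $X$ is in the CPE class $\alpha$ for some $\alpha\leq 2$, so by Proposition~\ref{prop_onichan} it has topological CPE; equivalently, $\EP_{2}(X,\sigma)$ is contained in the smallest closed equivalence relation containing the entropy pairs, so that relation equals $X^{2}$ and Theorem~\ref{teoremadelblansharr}(3) applies. I do not expect a genuine obstacle: the statement is essentially a packaging of results proved earlier in the paper. The only point that needs care is bookkeeping the interleaving of topological closure and transitive closure when promoting $\R(X,\sigma)\subseteq\EP_{1}(X,\sigma)$ to a statement about $\R_{2}$ and $\EP_{2}$, which is exactly where the closedness of $\EP(X,\sigma)\cup\Delta$ is used.
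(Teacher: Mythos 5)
Your proof is correct and follows essentially the same route as the paper, which gives no separate argument beyond "combine Theorem~\ref{pipeteorema} with Proposition~\ref{prop_roniewin}": POTP plus the fully supported measure turn asymptotic pairs into entropy pairs, closedness of $\EP(X,\sigma)\cup\Delta$ lets the inclusion pass to $\EP_2$, and Proposition~\ref{prop_roniewin} finishes. Your parenthetical shortcut via Corollary~\ref{corolario_delaescalerita} is the cleanest packaging of the same idea.
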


\begin{question}
	Is there a $G$-SFT in the asymptotic class 2 which does not satisfy bounded chain exchangability?
\end{question}

The answer to the previous question is negative for topologically weakly mixing subshifts, even if they are not SFTs. Recall that a dynamical system $(X,T)$ is \textbf{topologically weakly mixing} if
$(X\times X,T\times T)$ is transitive.

\begin{proposition}
\label{proposition_weakly_mixing_top_not_bce} A topologically
weakly mixing $G$-subshift is in the asymptotic class $0,1$ or $2$ if and only if it has bounded chain exchangeability.
\end{proposition}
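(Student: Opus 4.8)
The plan is to prove the two implications separately. One direction is immediate from Proposition~\ref{prop_roniewin}: if a (topologically weakly mixing) subshift $X$ has bounded chain exchangeability, then $\R_2(X,\sigma)=X^2$, so $X$ lies in the asymptotic class $0$, $1$ or $2$. No topological mixing hypothesis is needed here. The content of the proposition is therefore the converse: if a topologically weakly mixing $G$-subshift $X$ satisfies $\R_2(X,\sigma)=X^2$, then $X$ has BCE.

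For the converse, I would argue by contradiction: assume $X$ is in class $0$, $1$ or $2$ but does not have BCE. Since $\R_2(X,\sigma)=X^2$, unwinding the definitions gives $(\overline{\R(X,\sigma)})^+=X^2$; by the same kind of compactness argument as in the proof of Proposition~\ref{prop_roniewin} (run in reverse), every pair $(x,y)\in X^2$ is reached by a finite chain of $\overline{\R(X,\sigma)}$-steps, which after passing to patterns over a large enough window $F$ says that any two patterns $p,q\in L_F(X)$ are $n$-chain exchangeable for some $n=n(p,q,F)$. Failure of BCE means this $n$ cannot be bounded uniformly: there is a sequence of supports $F_k$ and patterns $p_k,q_k\in L_{F_k}(X)$ whose chain-exchangeability length goes to infinity. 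The key step is to use topological weak mixing to glue these ``hard'' pairs together into a single genuinely hard instance, deriving a contradiction. Concretely, weak mixing (equivalently, $(X\times X,\sigma\times\sigma)$ transitive, which for subshifts lets one freely concatenate patterns with bounded-size transition regions in $\ZZ$, or along a Følner-type exhaustion in general $G$) should allow one to place disjoint translated copies of the pairs $(p_k,q_k)$ inside one large pattern $P,Q$ over a common support; any chain of exchangeable patterns from $P$ to $Q$ restricts, on each copy, to a chain for $(p_k,q_k)$, so its length is at least $\max_k n(p_k,q_k)$, which is unbounded — contradicting that $X$ is in class $\le 2$ (which, via the reverse compactness argument, yields a bound depending only on… in fact, one must be careful here).

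The main obstacle — and the delicate point — is exactly this last clause. Being in class $\le 2$ gives, for each pair of patterns over a fixed support, a finite chain-length, but a priori the bound may depend on the support $F$, not be a single global constant; that is precisely the gap between CE and BCE. So the argument cannot be "class $\le 2$ $\Rightarrow$ uniformly bounded chain length" directly. Instead I expect one wants to exploit weak mixing to \emph{transport} chains: if $(p,q)$ over support $F$ is $n$-chain exchangeable, weak mixing lets you realize a witnessing chain inside a bounded enlargement of $F$, and conversely a chain over a large support, when the support contains many well-separated copies of a fixed hard pair, must be long on each copy. The precise mechanism is to show that the function $F\mapsto \sup\{\text{chain length of }p,q : p,q\in L_F(X)\}$ is, under weak mixing, either bounded or forces $\R_n(X,\sigma)\neq X^2$ for every finite $n$ (hence asymptotic class $\ge 3$ or $X$ not CPE at all), by building, from an unbounded sequence of supports, a single pair $(x,y)\in X^2$ that is not in $(\overline{\R(X,\sigma)})^{+}$ — using that the closure $\overline{\R(X,\sigma)}$ restricted to patterns over a window $F$ only identifies exchangeable patterns, and weak mixing prevents any ``shortcut'' chain that would simultaneously fix all the separated copies. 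Once that dichotomy is established, $\R_2(X,\sigma)=X^2$ rules out the second alternative, forcing boundedness, i.e. BCE.
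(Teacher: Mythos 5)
Your overall strategy is the paper's: the BCE $\Rightarrow$ class $\le 2$ direction is Proposition~\ref{prop_roniewin}, and the converse is proved by contradiction, using weak mixing to combine an infinite family of "hard" pattern pairs into a single instance. However, you do not close the argument, and the difficulty you flag at the end — that class $\le 2$ only gives a chain length depending on the support, so one seems to need a uniformity you don't have — leads you to propose a vague "dichotomy" in which weak mixing must "prevent shortcut chains". That mechanism is neither needed nor justified as stated, and as written the proof has a genuine gap at its final step.

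The concrete resolution is simpler than what you fear, and it is exactly why one passes from patterns to a single pair of \emph{configurations} before invoking class $\le 2$. Take $(p_m,q_m)\in L_{F_m}(X)^2$ not $m$-chain exchangeable. Transitivity of $X\times X$ lets you choose group elements $g_m$ so that the finite intersections $U_m=\bigcap_{i\le m}T^{g_i}([p_i]\times[q_i])$ are nonempty; picking points in $U_m$ and passing to an accumulation point yields one pair $(x,y)\in X^2$ with $\sigma^{g_m^{-1}}(x)|_{F_m}=p_m$ and $\sigma^{g_m^{-1}}(y)|_{F_m}=q_m$ for \emph{every} $m$ (cylinders are clopen). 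Now $\R_2(X,\sigma)=X^2$ applied to this single pair produces one finite chain $x=z^0,\dots,z^N=y$ with $(z^i,z^{i+1})\in\overline{\R(X,\sigma)}$; the length $N$ is just whatever it is for this pair — no uniformity over supports is required. Restricting the chain to the window $g_m\cdot F_m$ (each step of $\overline{\R(X,\sigma)}$, restricted to a finite window, gives exchangeable patterns, by the same approximation argument as in Proposition~\ref{proposotion_trivial_clase1}) exhibits an $N$-chain from $p_m$ to $q_m$ for every $m$; taking $m>N$ contradicts the choice of $(p_m,q_m)$. So there is no need to bound chain lengths as a function of the support, and no "shortcut" has to be excluded: the restriction of a configuration chain to a window is automatically a pattern chain of the same length. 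If you replace your final paragraph with this limit-point construction and restriction step, your proof coincides with the paper's.
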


\begin{proof}
	Suppose $X$ is not bounded chain exchangeable, then for every $n \in \NN$ there is a support $F_n$ and patterns $(p_n,q_n)$ which are not $n$-chain exchangeable. As $X^2$ is irreducible we have that there is $g_1 \in G$ such that $[p_0]\times [q_0] \cap T^{g_1}([p_1]\times[q_1]) \neq \emptyset$. Let $g_0$ be the identity of $G$ and iterate this argument to obtain a sequence $\{g_i\}_{i \in \NN}$ such that for every $m \in \NN$ the set $U_m = \bigcap_{i \leq m}T^{g_i}([p_i]\times[q_i])$ is non-empty. Choose $(x^m,y^m)\in U_m$ and extract an accumulation point $(x,y) \in X^2$. As $X$ has class at most two, there must be $N \in \NN$ and configurations $z^0,\dots, z^N$ such that $z^0 = x$, $z^N = y$ and $(z^i,z^{i+1}) \in \overline{\R(X,\sigma)}$. By definition of convergence, we have that for each $m \in \NN$ $\sigma^{g_m^{-1}}(x)|_{F_m} = p_m$ and $\sigma^{g_m^{-1}}(y)|_{F_m} = q_m$. Let $m > N$ and $r_i := \sigma^{g_m^{-1}}(z^i)|_{F_m}$. We have that $p_m = r_0$, $q_m = r_N$ and $(r_i,r_{i+1})$ is exchangeable. Hence we have a chain of length $N < m$ for the pair $(p_m,q_m)$ which is not $m$-chain-exchangeable by assumption, thus yielding a contradiction. The other direction is given by Proposition~\ref{prop_roniewin}.
\end{proof}

\begin{theorem}[\cite{Pavlov2013}]
	There exists a $\mathbb{Z}^{2}$-SFT which admits a fully supported measure and has bounded chain exchangeability for which there are non-exchangeable patterns.
\end{theorem}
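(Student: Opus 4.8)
The plan is to recall the explicit $\mathbb{Z}^2$-SFT $X$ of~\cite{Pavlov2013} and verify the three asserted properties in turn. Note first that, once this is done, Proposition~\ref{proposotion_trivial_clase1} shows that the existence of non-exchangeable patterns keeps $X$ out of the asymptotic class $1$, Proposition~\ref{prop_roniewin} shows that bounded chain exchangeability gives $X$ asymptotic class at most $2$, hence its asymptotic class is exactly $2$; Theorem~\ref{pipeteorema} together with Corollary~\ref{corolario_delaescalerita} then transports this to the CPE hierarchy. It is convenient to picture $X$ as built on an alphabet $\mathcal{A}=\{0,1\}\times\mathcal{B}$, the first coordinate being an unconstrained ``background'' contributing entropy $\log 2$ and the second a ``skeleton'' alphabet whose nearest--neighbour rules force every configuration to contain a rigid infinite feature (a family of lines or rays) which can nevertheless be reconfigured locally; one begins by recording the finite list of forbidden patterns and checking that $X$ is an SFT.

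For the fully supported measure I would take $\mu$ to be the product of a Bernoulli$(\tfrac12)$ measure on the background coordinate with a shift--invariant measure of full support on the skeleton subshift. The point to get right here is that a single isolated boundary between two regions cannot occur with positive probability under any $\ZZ^2$-invariant measure, so the skeleton of $X$ must be designed to admit a periodic, translation--invariant realisation; granting this, one checks directly that $\mu$ is carried by $X$ and that $\mu([w])>0$ for every $F\Subset\ZZ^2$ and every $w\in L_F(X)$, passing to $\sum_n 2^{-n}\mu_n$ if a single measure does not already see every cylinder, as in the construction at the end of the proof of Theorem~\ref{pipeteorema}.

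For the non-exchangeable patterns I would choose a finite shape $F$ and two patterns $p,q\in L_F(X)$ whose skeleton parts are incompatible ``at infinity'': a propagation lemma shows that any $x\in X$ with $x|_F=p$ is forced to carry an infinite sub--configuration that no $y\in X$ with $y|_F=q$ can carry outside a finite set, so every such pair $(x,y)$ differs on an infinite set, lies outside $\R(X,\sigma)$, and hence $p$ and $q$ are not exchangeable.

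For bounded chain exchangeability I would fix the constant $N$ furnished by the construction and, given $F$ and $p,q\in L_F(X)$, show that $p$ is $k$-chain-exchangeable with a ``normal form'' pattern $\bar p$ in which the skeleton is in a standard configuration, for some $k$ not depending on $F$; the individual exchanges are the elementary skeleton reconfigurations, each padded by a background moat to stay inside $X$ --- the same surgery device used in the proof of Proposition~\ref{prop_roniewin}. Then $\bar p$ and $\bar q$ agree outside $F$ after one further moat surgery, hence are exchangeable, and concatenation yields a chain of length $\le 2k+1\le N$ from $p$ to $q$. The main obstacle is precisely the uniformity of $N$: one must argue that the skeleton feature restricted to any finite window can be removed by $O(1)$ mutually non--interfering surgeries, independently of the size and shape of $F$ --- this is the combinatorial property Pavlov's geometry is engineered to provide, and it is exactly where a careless construction would instead produce a chain length growing with $|F|$, as happens for the Good Wave Shift of Theorem~\ref{teoremabonito}.
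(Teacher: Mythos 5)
The first thing to say is that the paper does not prove this statement at all: it is imported verbatim from~\cite{Pavlov2013} as an external citation, and its only role here is to feed Corollary~\ref{corolario_elronniedance}. So there is no internal proof to match your argument against, and the real question is whether your proposal would stand on its own as a proof of the existence claim. It does not. An existence theorem needs an exhibited object, and your write-up never defines the SFT: the alphabet $\mathcal{B}$, the nearest-neighbour rules, the ``rigid infinite feature'', the ``propagation lemma'', and above all the constant $N$ are all placeholders whose required properties are simply asserted to be ``the combinatorial property Pavlov's geometry is engineered to provide.'' Each of the three verifications is thereby reduced to an unverified hypothesis about an unspecified construction, which is circular: you are assuming an SFT with exactly the three properties the theorem claims to exist. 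In particular the step you yourself flag as ``the main obstacle'' --- that the skeleton restricted to an arbitrary finite window can be normalised by $O(1)$ surgeries uniformly in the window --- is precisely the content of bounded chain exchangeability, and no argument for it is given.

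That said, the surrounding analysis is sound and matches how the paper consumes the result: you correctly observe that non-exchangeable patterns plus Proposition~\ref{proposotion_trivial_clase1} exclude asymptotic class $1$, that BCE plus Proposition~\ref{prop_roniewin} caps the asymptotic class at $2$, and that Theorem~\ref{pipeteorema} (via a fully supported measure and Corollary~\ref{corolario_delaescalerita}) transfers this to the CPE hierarchy. Your remark that no $\ZZ^2$-invariant measure can charge an isolated infinite boundary, so the skeleton must admit translation-invariant (e.g.\ periodic) realisations, is also a genuine constraint that any such example must satisfy. To turn the proposal into a proof you would have to either reproduce Pavlov's concrete tile set and carry out the three checks on it, or honestly state the theorem as a citation, as the paper does.
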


\begin{corollary}\label{corolario_elronniedance}
	There exists a $\mathbb{Z}^{2}$-SFT in the CPE class 2. 
\end{corollary}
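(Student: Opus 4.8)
The plan is to combine the $\mathbb{Z}^2$-SFT $Y$ provided by the cited theorem of Pavlov with the structural results established in this section, so that essentially no new dynamical construction is needed. The SFT $Y$ admits a fully supported invariant measure and has bounded chain exchangeability, but possesses non-exchangeable patterns; I would extract exactly these two features and feed them into the machinery already built.

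First I would invoke Proposition~\ref{prop_roniewin}: since $Y$ has BCE, we get $\R_2(Y,\sigma) = Y^2$, so $Y$ lies in the asymptotic class $0$, $1$, or $2$. Next I would rule out classes $0$ and $1$. Class $0$ is immediately excluded because a subshift in the asymptotic class $0$ admitting a fully supported measure must consist of a single uniform configuration (as remarked right after the sunny-side-up example), which $Y$ is not. To exclude class $1$, I would use Proposition~\ref{proposotion_trivial_clase1}: a subshift has asymptotic class at most $1$ if and only if every pair of patterns over the same support is exchangeable. Since $Y$ has non-exchangeable patterns by hypothesis, it cannot be in class $0$ or $1$. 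Therefore $Y$ is in the asymptotic class $2$.

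Finally, to translate this into a statement about the CPE class, I would apply Corollary~\ref{corolario_delaescalerita}: since $Y$ is an expansive $G$-TDS (a subshift) with the POTP (an SFT) admitting a fully supported invariant measure, we have $\EP_\alpha(Y,\sigma) = \R_\alpha(Y,\sigma)$ for every ordinal $\alpha > 0$. Consequently $Y$ is in the CPE class $2$, which is precisely the claim.

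The only mildly delicate point is making sure that the non-exchangeability of patterns genuinely obstructs class $1$ and not merely class $0$; this is handled cleanly by the biconditional in Proposition~\ref{proposotion_trivial_clase1}, so there is no real obstacle — the corollary is essentially a bookkeeping assembly of Pavlov's SFT together with Proposition~\ref{prop_roniewin}, Proposition~\ref{proposotion_trivial_clase1}, and Corollary~\ref{corolario_delaescalerita}. If one wanted to be extra careful, one should double-check that the cited $\mathbb{Z}^2$-SFT is being treated as a $G$-SFT with $G = \mathbb{Z}^2$, so that all the hypotheses (expansive, POTP, countable amenable $G$) are literally met, but this is immediate.
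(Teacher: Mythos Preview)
Your proposal is correct and follows essentially the same route as the paper: use Proposition~\ref{prop_roniewin} together with Proposition~\ref{proposotion_trivial_clase1} to place Pavlov's example in the asymptotic class~$2$, then invoke the fully supported measure and Corollary~\ref{corolario_delaescalerita} (equivalently Theorem~\ref{pipeteorema}) to conclude CPE class~$2$. One minor redundancy: your separate exclusion of class~$0$ is unnecessary, since Proposition~\ref{proposotion_trivial_clase1} is a biconditional characterizing asymptotic class \emph{at most}~$1$, so the existence of non-exchangeable patterns already rules out both class~$0$ and class~$1$ simultaneously.
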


\begin{proof}
	Using Proposition~\ref{prop_roniewin} and Proposition~\ref{proposotion_trivial_clase1} we obtain that Pavlov's example is in the asymptotic class $2$. Also, since it admits a fully supported measure Theorem~\ref{pipeteorema} yields the result.
\end{proof}

\section{A $\ZZ^3$-SFT in the CPE class $3$}
From this point forward, we will use the letters $x,y,z$ exclusively to represent coordinates in $\ZZ^3$. Before presenting our main example we construct a sofic $\mathbb{Z}^{2}$%
-subshift in the asymptotic class 3. The purpose of this construction is to
illustrate the main ideas of the proof of Theorem~\ref{teoremabonito} in a
simpler setting.

\bigskip

First we construct a $\mathbb{Z}^{2}$-SFT  whose alphabet is given by the
tiles in Figure~\ref{fig_alfabeto_felipe}. We will refer to the first tile
as a \textbf{white tile} and denote it by $\square $, the second and third as \textbf{line tiles} and the rest as \textbf{corner
tiles}. The adjacency rules are those of Wang tilings, namely, two tiles can
be adjacent to each other if and only if the colors match along their shared
border, see Figure~\ref{fig_config_felipe}. We denote this SFT as $X$.

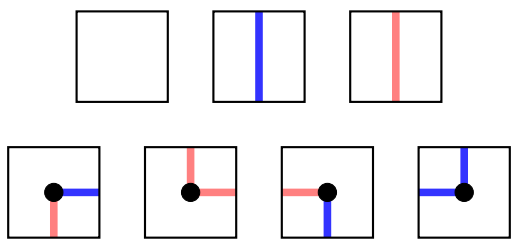
\begin{figure}[h!]
	\centering
	\begin{tikzpicture}[scale=0.6]

\def \White {
	\draw[thick] (0,0) rectangle (2,2);
	}
\def \Blue {
	\draw[line width=1mm, blue!80] (1,0) -- (1,2);
	\draw[thick] (0,0) rectangle (2,2);
	}
\def \Red {
	\draw[line width=1mm, red!50] (1,0) -- (1,2);
	\draw[thick] (0,0) rectangle (2,2);
	}
\def \RD {
	\draw[line width=1mm, red!50] (1,0) -- (1,1);
	\draw[line width=1mm, blue!80] (1,1) -- (2,1);
	\draw[fill = black] (1,1) circle (0.2);
	\draw[thick] (0,0) rectangle (2,2);
	}
\def \RU {
	\draw[line width=1mm, red!50] (1,2) -- (1,1);
	\draw[line width=1mm, red!50] (1,1) -- (2,1);
	\draw[fill = black] (1,1) circle (0.2);
	\draw[thick] (0,0) rectangle (2,2);
	}
\def \BD {
	\draw[line width=1mm, blue!80] (1,0) -- (1,1);
	\draw[line width=1mm, red!50] (1,1) -- (0,1);
	\draw[fill = black] (1,1) circle (0.2);
	\draw[thick] (0,0) rectangle (2,2);
	}
\def \BU {
	\draw[line width=1mm, blue!80] (1,2) -- (1,1);
	\draw[line width=1mm, blue!80] (1,1) -- (0,1);
	\draw[fill = black] (1,1) circle (0.2);
	\draw[thick] (0,0) rectangle (2,2);
	}

\begin{scope}[shift = {(0,0)}]
\White
\end{scope}

\begin{scope}[shift = {(3,0)}]
\Blue
\end{scope}

\begin{scope}[shift = {(6,0)}]
\Red
\end{scope}

\begin{scope}[shift = {(-1.5,-3)}]

\begin{scope}[shift = {(0,0)}]
\RD
\end{scope}

\begin{scope}[shift = {(3,0)}]
\RU
\end{scope}

\begin{scope}[shift = {(6,0)}]
\BD
\end{scope}

\begin{scope}[shift = {(9,0)}]
\BU
\end{scope}

\end{scope}
\end{tikzpicture}
	\caption{The alphabet.}
	\label{fig_alfabeto_felipe}
\end{figure}

\begin{figure}[h]
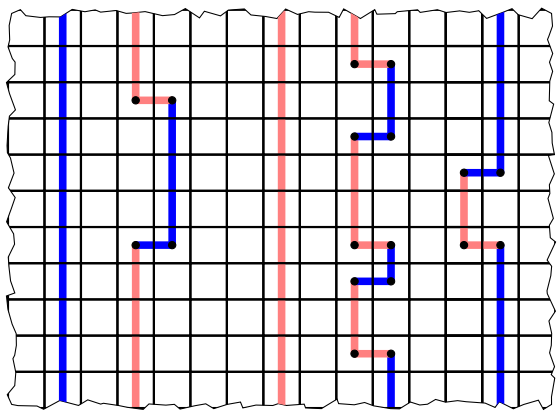

	\centering
	\include{config_felipe} 
	\caption{A configuration in $X$.}
	\label{fig_config_felipe}
\end{figure}

Let $X_{W}$ be the symbolic factor of $X$ obtained by removing the colors in
every configuration. We will call this example the \textbf{Worm Shift}. By
definition, this is a sofic $\ZZ^{2}$-subshift. See Figure~\ref{fig_config_felipe2}.

\begin{figure}[h]
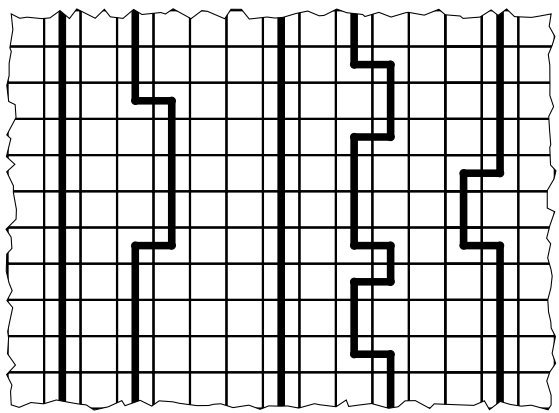

	\centering
	\include{config_felipe2} 
	\caption{A configuration in $X_{W}$.}
	\label{fig_config_felipe2}
\end{figure}

After proving a series of claims we will show that the Worm Shift is in the
asymptotic class $3$. 

\begin{definition}
	Given $c\in X_W$ we say $W\subset \mathbb{Z}^{2}$ is a \textbf{worm} if 
	
	\begin{enumerate}
		\item There exists $x\in 
		\mathbb{Z}$ such that $W\subset \left\{ x,x+1\right\} \times \mathbb{Z}$. Furthermore if $W$ is contained in ${\{x\}}\times \mathbb{Z}$ or ${\{x+1\}}\times \mathbb{Z}$ we say it is a \textbf{straight worm.}
		
		\item $W$ is infinite and 4-connected.
		
		\item $c_{(x,y)}\neq \square $  for every $(x,y)\in $ $W$.
		
		\item If $(x,y),(x+1,y)\in W$ then $(c_{(x,y)},c_{(x+1,y)}) = ( \begin{tikzpicture}[scale=0.2]\draw[line width=0.5mm, black] (1,0) -- (1,1);
		\draw[line width=0.5mm, black] (1,1) -- (2,1);
		\draw[fill = black] (1,1) circle (0.2);
		\draw[thick] (0,0) rectangle (2,2);\end{tikzpicture}, \begin{tikzpicture}[scale=0.2]\draw[line width=0.5mm, black] (1,2) -- (1,1);
		\draw[line width=0.5mm, black] (1,1) -- (0,1);
		\draw[fill = black] (1,1) circle (0.2);
		\draw[thick] (0,0) rectangle (2,2);\end{tikzpicture})$ or $(c_{(x,y)},c_{(x+1,y)}) = ( \begin{tikzpicture}[scale=0.2]\draw[line width=0.5mm, black] (1,2) -- (1,1);
		\draw[line width=0.5mm, black] (1,1) -- (2,1);
		\draw[fill = black] (1,1) circle (0.2);
		\draw[thick] (0,0) rectangle (2,2);\end{tikzpicture}, \begin{tikzpicture}[scale=0.2]\draw[line width=0.5mm, black] (1,0) -- (1,1);
		\draw[line width=0.5mm, black] (1,1) -- (0,1);
		\draw[fill = black] (1,1) circle (0.2);
		\draw[thick] (0,0) rectangle (2,2);\end{tikzpicture})$.
	\end{enumerate}
\end{definition}

Given the constraints of the SFT $X$ it is not hard to check the following
remarks. 

\begin{remark}
	\label{worm2}Given $c\in X_{W}$ and $c_{(x,y)}\neq \square $ there exists a
	worm $W$ such that $(x,y)\in W.$
\end{remark}

\begin{remark}
	If $c\in X_{W}$ and $(c,\cuadri)\in \R(X_{W},\sigma )$ then $c=\cuadri.$
\end{remark}

We begin by showing that $\R_{2}(X_{W},\sigma )\neq X_{W}^{2}.$  

\begin{claim}
	\label{claim_sofico_3} For each $c\in X_{W}$ with $c\neq \cuadri$ we have $%
	(c,\cuadri)\notin \R_{1}(X_{W},\sigma ).$
\end{claim}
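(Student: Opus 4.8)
The plan is to establish the stronger statement $(c,\cuadri)\notin\overline{\R(X_{W},\sigma)}$; this suffices because $\R_{1}(X_{W},\sigma)=\overline{\R(X_{W},\sigma)}$. Indeed, if $\R(X_{W},\sigma)$ is not closed this is the definition of $\R_{1}$, while if it is closed then $\R_{1}(X_{W},\sigma)=\R(X_{W},\sigma)^{+}=\R(X_{W},\sigma)$, the last equality because the asymptotic relation is always an equivalence relation. Besides Remark~\ref{worm2}, I will use the following property of $X_{W}$, a mild strengthening of Remark~\ref{worm2} that one checks directly from the adjacency rules of $X$: every non-white cell $(a,b)$ of a configuration $c^{\prime}\in X_{W}$ lies on a bi-infinite worm contained in a pair of adjacent columns $\{a^{\ast},a^{\ast}+1\}$ with $a^{\ast}\in\{a-1,a\}$, whose second coordinate has no local extremum and hence is weakly monotone along the worm; in particular this worm meets every row $\ZZ\times\{k\}$, $k\in\ZZ$. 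Informally, no tile of the alphabet can occur as a vertical turning point of a worm.

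Assume for contradiction that $c\neq\cuadri$ yet $(c,\cuadri)\in\overline{\R(X_{W},\sigma)}$, and choose asymptotic pairs $(c^{(n)},d^{(n)})\in\R(X_{W},\sigma)$ with $c^{(n)}\to c$ and $d^{(n)}\to\cuadri$. Fix a cell $v=(x_{0},y_{0})$ with $c_{v}\neq\square$ and put $R:=|x_{0}|+|y_{0}|+2$. Using the two convergences, pick $n$ so large that $c^{(n)}$ and $c$ agree on the box $Q:=\{-R,\dots,R\}^{2}$ and $d^{(n)}$ and $\cuadri$ agree on $Q$; let $F\Subset\ZZ^{2}$ satisfy $c^{(n)}|_{\ZZ^{2}\setminus F}=d^{(n)}|_{\ZZ^{2}\setminus F}$, and fix $S$ with $F\subseteq\{-S,\dots,S\}^{2}$. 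Write $\pi_{1},\pi_{2}$ for the two coordinate functions on $\ZZ^{2}$.

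The contradiction comes from two observations. First, every cell $w$ with $d^{(n)}_{w}\neq\square$ satisfies $|\pi_{1}(w)|\geq R$: by the quoted property, $w$ lies on a worm $V$ of $d^{(n)}$ which is contained in two adjacent columns and meets the row $\ZZ\times\{0\}$, so if $|\pi_{1}(w)|\leq R-1$ then $V$ has a cell $u$ with $\pi_{2}(u)=0$ and $|\pi_{1}(u)|\leq|\pi_{1}(w)|+1\leq R$; then $u\in Q$ and $d^{(n)}_{u}\neq\square$, contradicting that $d^{(n)}$ agrees with $\cuadri$ on $Q$. Second, since $c^{(n)}_{v}=c_{v}\neq\square$, the quoted property applied to $c^{(n)}$ yields a worm $W^{\prime}$ through $v$ contained in the three columns $\{x_{0}-1,x_{0},x_{0}+1\}$ and meeting every row; choose $w^{\prime}\in W^{\prime}$ with $\pi_{2}(w^{\prime})=S+1$. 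Then $w^{\prime}\notin F$, so $d^{(n)}_{w^{\prime}}=c^{(n)}_{w^{\prime}}\neq\square$, while $|\pi_{1}(w^{\prime})|\leq|x_{0}|+1<R$. This contradicts the first observation, so the claim follows.

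The only step requiring genuine care is the structural property quoted in the first paragraph, namely that a worm is a bi-infinite path monotone in the vertical direction and therefore meets every row; this is the unique place where the precise adjacency rules of $X$ (and not merely its alphabet) enter, via the fact that the corner tiles can appear along a worm only with a chirality that forbids the worm from reversing its vertical direction. The remaining steps are routine bookkeeping; the essential feature making the argument work is that every worm appearing in it is confined to a range of columns bounded independently of $n$ — which is exactly what the convergence $c^{(n)}\to c$ provides — while $d^{(n)}\to\cuadri$ rules out any worm of $d^{(n)}$ from touching the central columns near $x_{0}$.
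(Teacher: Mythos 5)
Your proof is correct and follows essentially the same route as the paper's: identify $\R_{1}(X_{W},\sigma)$ with $\overline{\R(X_{W},\sigma)}$, take an asymptotic pair agreeing with $(c,\cuadri)$ on a large window, propagate the worm of the first configuration far away to force a worm in the second, and derive a contradiction from the fact that a worm is confined to two adjacent columns yet meets every row. You are merely more explicit than the paper about the "meets every row" property of worms, which the paper leaves implicit in Remark~\ref{worm2}.
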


\begin{proof}
	Assume that $(c,\cuadri)\in \R_{1}(X_{W},\sigma )$ and there exists $%
	(x,y)\in \mathbb{Z}^{2}$ such that $c_{(x,y)}\neq \square $. There exists $%
	c^{\prime },d\in X_{W}$ such that $(c^{\prime },d)\in \R(X_{W},\sigma )$
	with $c^{\prime }|_{(x,y)+[-2,2]^{2}}=c|_{(x,y)+[-2,2]^{2}}$ and $%
	d|_{(x,y)+[-2,2]^{2}}=\cuadri|_{(x,y)+[-2,2]^{2}}$. This implies that $%
	c_{(x,y)}^{\prime }\neq \square $ and hence by the previous remarks there
	exists a worm (of $c^{\prime }$) contained in $\left\{ x-1,x,x+1\right\}
	\times \mathbb{Z}.$ Since $(c^{\prime },d)\in \R(X_{W},\sigma )$ we conclude
	that there exists a worm (of $d$) contained in $\left\{ x+[-2,2]\right\}
	\times \mathbb{Z}.$ We obtain a contradiction since we already noted that $%
	d|_{(x,y)+[-2,2]^{2}}=\cuadri|_{(x,y)+[-2,2]^{2}}.$
\end{proof}

\begin{claim}
	\label{claim_sofico_1} For each $c\in X_{W}$ with $c\neq \cuadri$ we have $%
	(c,\cuadri)\notin \R_{2}(X_{W},\sigma ).$
\end{claim}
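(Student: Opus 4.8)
The plan is to bootstrap from Claim~\ref{claim_sofico_3} using only the formal structure of the relation $\R_2$, with no further combinatorics on worms. First I would unwind the definition: since $\R(X_W,\sigma)$ is an equivalence relation, the successor step at $\alpha=1$ produces $\R_1(X_W,\sigma)=\overline{\R(X_W,\sigma)}$ whether or not $\R_0$ happens to be closed (if it is closed then $\R_0^+=\R_0=\overline{\R_0}$; if it is not, the closure branch applies directly). In particular $\R_1(X_W,\sigma)$ is closed, it is reflexive because it contains $\Delta$, and it is symmetric because the topological closure of a symmetric subset of $X_W^2$ is symmetric. Hence the successor step at $\alpha=2$ takes the ``otherwise'' branch, so $\R_2(X_W,\sigma)=\R_1(X_W,\sigma)^+$, which — being the smallest equivalence relation containing a reflexive symmetric relation — is exactly the transitive closure of $\R_1(X_W,\sigma)$. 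Thus $(a,b)\in\R_2(X_W,\sigma)$ if and only if there is a finite chain $a=z^{0},z^{1},\dots,z^{n}=b$ with $(z^{i},z^{i+1})\in\R_1(X_W,\sigma)$ for every $i$.

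With this description in hand the claim is immediate. Suppose towards a contradiction that $c\neq\cuadri$ yet $(c,\cuadri)\in\R_2(X_W,\sigma)$, and fix a chain $c=z^{0},\dots,z^{n}=\cuadri$ with consecutive pairs in $\R_1(X_W,\sigma)$. Let $j$ be the least index with $z^{j}=\cuadri$; it exists since $z^{n}=\cuadri$, and $j\geq 1$ because $z^{0}=c\neq\cuadri$, so by minimality $z^{j-1}\neq\cuadri$. But then $(z^{j-1},\cuadri)=(z^{j-1},z^{j})\in\R_1(X_W,\sigma)$ with $z^{j-1}\neq\cuadri$, contradicting Claim~\ref{claim_sofico_3}. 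Therefore $(c,\cuadri)\notin\R_2(X_W,\sigma)$.

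I do not expect any genuine obstacle here: all the substance lives in Claim~\ref{claim_sofico_3}, which exploits that a single non-white cell forces an infinite worm confined to three consecutive columns, a feature stable under $\varepsilon$-perturbations and under the finite modifications allowed by $\R(X_W,\sigma)$, and irreconcilable with $\cuadri$ on a $5\times5$ window. The present claim only needs the observation that $\cuadri$ is \emph{isolated} in $\R_1(X_W,\sigma)$ — the sole pair of $\R_1(X_W,\sigma)$ containing $\cuadri$ is $(\cuadri,\cuadri)$ — so it remains isolated after passing to the transitive closure. If one wished to avoid the abstract chain argument, one could instead re-run the worm-tracking argument of Claim~\ref{claim_sofico_3} directly along the chain, propagating the existence of a worm confined to a bounded band of columns from $z^{0}$ to $z^{n}$; but the bootstrap above is shorter and exposes the inductive pattern that recurs at each level of the hierarchy.
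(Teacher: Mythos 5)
Your proof is correct and follows essentially the same route as the paper: both unwind $\R_{2}(X_{W},\sigma)=\R_{1}(X_{W},\sigma)^{+}$ into a finite chain and then use Claim~\ref{claim_sofico_3} to force every link, hence $c$ itself, to equal $\cuadri$. Your minimal-index formulation and the paper's backward induction along the chain are the same argument.
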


\begin{proof}
	Assume that $(c,\cuadri)\in \R_{2}(X_{W},\sigma )$. This implies there
	exists a chain $c=c^{0},\dots ,c^{n}=\cuadri$ in $X_{W}$ such that $%
	(c^{k},c^{k+1})\in \R_{1}(X_{W},\sigma )$. By Claim~\ref{claim_sofico_3}
	every element of the chain must be $\cuadri$, therefore $c=\cuadri$.
\end{proof}

It remains to show that $\R_{3}(X_{W},\sigma )=X_{W}^{2}.$ We will do it
first in simple cases and build up to a general pair using the previous
steps.

\begin{claim}
	\label{claim_sofico_2} $\R_{3}(X_{W},\sigma )=X_{W}^{2}.$
\end{claim}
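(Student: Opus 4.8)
The goal is to show $\R_3(X_W,\sigma)=\overline{\big(\overline{\R(X_W,\sigma)}\big)^{+}}=X_W^{2}$; equivalently, that $\R_2(X_W,\sigma)=\big(\overline{\R(X_W,\sigma)}\big)^{+}$ is dense in $X_W^{2}$. Since a basic neighbourhood of $(c,d)$ in $X_W^2$ is determined by the restriction to a finite window $F\Subset\ZZ^{2}$, and since configurations with only finitely many worms are dense in $X_W$ (given $F$, each worm of $c$ meeting $F$ can be continued straight outside $F$ and all remaining columns filled with $\square$, yielding a valid configuration agreeing with $c$ on $F$), it suffices to prove the following: for every finite window $F$ and all $c,d\in X_W$ there is a pair $(c',d')\in\R_2(X_W,\sigma)$ with $c'|_{F}=c|_{F}$ and $d'|_{F}=d|_{F}$, and we may moreover take $c',d'$ to have finitely many worms. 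Recall that a pair lies in $\R_1(X_W,\sigma)=\overline{\R(X_W,\sigma)}$ exactly when it is approximated on every finite window by genuine asymptotic pairs, and a pair lies in $\R_2$ exactly when it is joined by a \emph{finite} chain of $\R_1$-related configurations.

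The engine of the proof is a \emph{sliding lemma}: if $c$ has a straight worm in column $a$ and no worm in columns $a\pm1$, then $(c,\tilde c)\in\R_1(X_W,\sigma)$, where $\tilde c$ is obtained from $c$ by replacing that worm with a straight worm in column $a+1$ (or $a-1$), everything else unchanged. To verify this, fix $[-N,N]^{2}$ and approximate by an asymptotic pair: let $c_N'$ agree with $c$ except that, outside the window, its column-$a$ worm jogs into column $a+1$ on a band $N<y\le M$ and jogs back to column $a$ for $y>M$ (and symmetrically below), and let $\tilde c_N'$ be $\tilde c$ with its column-$(a+1)$ worm given the matching finite detour; then $c_N'$ and $\tilde c_N'$ differ in finitely many coordinates, hence are asymptotic, and both agree with $c,\tilde c$ on $[-N,N]^2$. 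The same finite-window approximation shows that a worm which is straight in column $a$ for $y\ll0$ and in column $a+1$ for $y\gg0$ (a \emph{net jog}) is $\R_1$-related to the corresponding straightened worm; and a purely asymptotic move lets us add or delete any \emph{finite} jog pattern on a worm, since the two configurations then differ in finitely many places. The one thing to check is that all the detours introduced near the window boundary are built from the two admissible corner-pairs in the definition of a worm and are consistent with the monotonicity forced on a worm by the tile rules; this is routine from the adjacency rules.

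With these tools in place, fix $F$, $c$ and $d$; let $c'$ (resp.\ $d'$) be a finitely-many-worm configuration agreeing with $c$ (resp.\ $d$) on $F$, with $p$ (resp.\ $q$) worms. After (say) $p\le q$, replace $c'$ by $c'$ together with $q-p$ extra straight worms placed in columns far to the right of $F$, so that both configurations now have the same number $k$ of worms and still agree with $c,d$ on $F$. Straighten every worm of $c'$ and of $d'$ by finitely many $\R_1$-relations (removing net jogs as above, and the finite jog patterns by one asymptotic move), reaching configurations with $k$ straight worms each; then relocate the straight worms of the first configuration to the columns occupied by the straight worms of the second, matching the $i$-th worm from the left with the $i$-th from the left and sliding one column at a time — the left-to-right order is preserved throughout, so no two worms ever collide, and this is a finite composition of $\R_1$-relations, hence gives a pair in $\R_2$. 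Finally reinstall on the second configuration the finite jog patterns of $d'$ by one asymptotic move. Composing, $(c',d')\in\R_2(X_W,\sigma)$, which establishes density and therefore $\R_3(X_W,\sigma)=X_W^{2}$. This is consistent with Claims~\ref{claim_sofico_3}–\ref{claim_sofico_1}: when $d=\cuadri$ the approximant $d'$ is \emph{not} $\cuadri$ but a configuration carrying the dummy worms off to infinity, so the argument never produces an $\R_2$-relation of the form $(c,\cuadri)$; the number of worms is genuinely an $\R_2$-invariant, and it is only destroyed in the passage to the closure $\R_3$.

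The main obstacle is the careful verification of the sliding lemma and of the ``straighten a worm with a net jog'' step: both rest on tracking the combinatorics of the corner tiles and confirming that the finite-window approximants are legitimate asymptotic pairs of $X_W$, i.e.\ valid Wang tilings differing in only finitely many coordinates, which amounts to checking that every detour inserted near the window boundary is admissible and leaves the rest of the configuration untouched. The remaining bookkeeping — padding worm counts with far-away dummies and relocating several worms order-preservingly without collisions — is conceptually straightforward but must be carried out so that every intermediate configuration is legal and the chains of $\R_1$-relations stay finite.
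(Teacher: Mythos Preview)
Your proposal is correct and follows essentially the same approach as the paper: both arguments establish a sliding lemma showing that moving a straight worm one column is an $\R_1$-relation, use this together with a straightening step to show that any two configurations with the same finite number of worms are $\R_2$-related, and conclude by the density of such pairs. The paper organizes this into five numbered steps (consecutive straight worms $\to$ arbitrary straight worms $\to$ arbitrary single worms $\to$ $m$ worms $\to$ density), but your packaging via the sliding lemma, straightening, padding worm counts with far-away dummies, and order-preserving relocation is the same argument; your observation that the approximants $c',d'$ can be taken with worms continued straight outside $F$ (hence with only finitely many jogs) is exactly what makes your straightening step work and matches the paper's implicit treatment.
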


\begin{proof} We proceed through five steps. The goal of the first four steps is to show that any pair of configurations having the same finite number of worms must belong to $\R_{2}(X_{W},\sigma )$. The fifth step shows that those pairs of configurations are dense.
	
	\bigskip
	
	\textbf{Step 1} Two configurations, $c$ and $d$, with one straight worm
	each, in consecutive positions.
	
		\bigskip
	
	That is, there exists $n\in \mathbb{Z}$ such
	that  $c_{(x,y)}=\square $ if and only if $x\neq n$ and $d_{(x,y)}=\square $
	if and only if $x\neq n+1$.
	Let $m \geq 1$ and $H:=\left\{  (x,y)\in\mathbb{Z}^{2}:\left\vert
	y\right\vert >m,x\neq n\right\}  \cup\left\{  (x,y)\in\mathbb{Z}%
	^{2}:\left\vert y\right\vert =m,x\neq n,x\neq n+1\right\}  \cup\left\{
	(x,y)\in\mathbb{Z}^{2}:\left\vert y\right\vert <m,x\neq n+1\right\}  .$ We
	define $e^{(m)}\in X_{W}$ as the only configuration that satisfies $e_{(x,y)}%
	^{(m)}=\square $ if and only if $(x,y)\in H$. Note that $(e^{(m)},c)\in \R(X_{W},\sigma)$
	and $e^{(m)}$ converges to $d$. This implies that $(c,d)\in \R_{1}(X_{W},\sigma ).$
	
	\bigskip
	
	\textbf{Step 2} Two configurations $c$ and $d$, with one straight worm each.
	
	\bigskip
	
	In this case, there exists $n,n^{\prime }\in \mathbb{Z}$ such that $c_{(x,y)}=\square 
	$ if and only if $x\neq n$ and $d_{(x,y)}=\square $ if and only if $x\neq
	n^{\prime }.$ Without loss of generality assume that $n<n^{\prime }.$ For $k\in \left[
	n,n^{\prime }\right] $ we define $c^{k}$ as: $c^{k}_{(x,y)}=\square $ if and
	only if $x\neq k$. Using step 1 we have that $(c^{i},c^{i+1})\in \R%
	_{1}(X_{W},\sigma ).$ Since $c^{n}=c$ and $c^{n^{\prime }}=d$ we conclude
	that $(c,d)\in \R_{2}(X_{W},\sigma ).$
	
		\bigskip
	\textbf{Step 3} Two configurations $c$ and $d,$ with one worm each (not
	necessarily straight).
		\bigskip
	
	Let $W_{c}$ and $W_{d}$ be the worms of $c$ and $d$ respectively. There
	exists $i,j\in \ZZ$ such that $W_{c}\subset \left\{ i,i+1\right\} \times 
	\mathbb{Z}$ and $W_{d}\subset \left\{ j,j+1\right\} \times \mathbb{Z}.$ We
	define $c^{\prime },d^{\prime }\in X_{W}$ as follows: $c_{(x,y)}^{\prime
	}=\square $ if and only if $y\neq i$ and $d_{(x,y)}^{\prime }=\square $ if
	and only if $y\neq j$. Using a similar argument as in step 1 we obtain that $(c,c^{\prime })\in \R%
	_{1}(X_{W},\sigma )$ and $(d,d^{\prime })\in \R_{1}(X_{W},\sigma ).$ Using
	step 2 we have $(c',d') \in \R_{2}(X_{W},\sigma )$. As $\R_{2}(X_{W},\sigma )$ is an equivalence relation we
	conclude $(c,d)\in \R_{2}(X_{W},\sigma ).$
	
		\bigskip
	\textbf{Step 4} Two configurations $c$ and $d$, with exactly $m$ worms
	each. 
		\bigskip
	
	As both configurations have finitely many worms, there exists a value $n \in \ZZ$ such that any worm in either $c$ or $d$ is contained in $\{-n,\dots, n\} \times \ZZ$. Let $e$ be the configuration containing $m$ straight worms at positions $n+2, n+4, \dots, n+2m$. Using the tools in Step 2 and Step 3 we can move the rightmost worm in $c$ towards the right one step at a time until it becomes a straight worm at $n+2m$. Iterating this procedure for all worms in $c$ from right to left we obtain that $(c,e) \in \R_{2}(X_{W},\sigma )$. Similarly, $(d,e) \in \R_{2}(X_{W},\sigma )$ and thus $(c,d) \in \R_{2}(X_{W},\sigma )$.
	
	\bigskip
	
	\textbf{Step 5} Two arbitrary configurations $c$ and $d.$
	
	\bigskip
	
	Let $Y:=\left\{ (c^{\prime },d^{\prime })\in X_{W}^{2}:\text{ }c^{\prime }%
	\text{ and }d^{\prime }\text{ have the same finite number of worms}\right\} .
	$ We will see that $Y$ is dense in $X_{W}^{2}$. Let $n\in N$ and consider $%
	[-n,n]^{2}\subset \mathbb{Z}^{2}.$ Let $N_{n}(c)$ be the number of worms for $c$	contained in $[-n-1,n+1]^{2}\times \mathbb{Z}$. 
	
	If $N_{n}(c)=N_{n}(d)$ then construct $c^{\prime }$ and $d^{\prime }$ as the
	configurations with $N_{n}(c)$ worms in same positions as $c$ and $d$. Otherwise without loss of generality $N_{n}(c)<N_{n}(d)$. We construct $d^{\prime }$ exactly as before and
	construct $c^{\prime }$ exactly as before in the window $[-n-1,n+1]^{2}%
	\times \mathbb{Z}$ and then add $N_{n}(d)-N_{n}(c)$ worms outside this
	window. In both cases $(c^{\prime },d^{\prime })\in Y$ coincide with $(c,d).$ in $[-n,n]^2$. As $n$ is arbitrary this shows that $Y$ is dense in $X^2_W$.
	
	By step 4 we have $Y \subset \R_2(X_{W},\sigma
	)$. As $\R_{3}(X_{W},\sigma
	)$ is closed we conclude $(c,d) \in \R_{3}(X_{W},\sigma)$.\end{proof}

\begin{proposition}
	\label{construction_sofico} The Worm Shift is in the asymptotic class $3$.
\end{proposition}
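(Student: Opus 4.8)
The plan is simply to assemble the three claims established above. Claim~\ref{claim_sofico_2} shows that $\R_{3}(X_{W},\sigma) = X_{W}^{2}$, so the Worm Shift lies in the asymptotic class $\alpha$ for some $\alpha \leq 3$. It then remains only to rule out the classes $0$, $1$ and $2$.

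Since the hierarchy $\R_{0}(X_{W},\sigma) \subseteq \R_{1}(X_{W},\sigma) \subseteq \R_{2}(X_{W},\sigma)$ is increasing by construction, it is enough to exhibit a single pair not belonging to $\R_{2}(X_{W},\sigma)$. I would take any configuration $c \in X_{W}$ with $c \neq \cuadri$; such configurations exist, for instance the configuration with a single straight worm used in Step 1 of the proof of Claim~\ref{claim_sofico_2}. Claim~\ref{claim_sofico_1} then gives $(c,\cuadri) \notin \R_{2}(X_{W},\sigma)$, whence $\R_{2}(X_{W},\sigma) \neq X_{W}^{2}$, and therefore also $\R_{1}(X_{W},\sigma) \neq X_{W}^{2}$ and $\R_{0}(X_{W},\sigma) \neq X_{W}^{2}$. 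Combining this with the previous paragraph, $(X_{W},\sigma)$ is in the asymptotic class $3$, as claimed.

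No genuine obstacle remains at this stage: the entire content is packed into Claims~\ref{claim_sofico_1} and~\ref{claim_sofico_2}, whose proofs have already been carried out. The only point worth stating explicitly in the write-up is that membership in the asymptotic class $3$ requires $\R_{\beta}(X_{W},\sigma) \neq X_{W}^{2}$ for every $\beta < 3$ and not merely for $\beta = 2$, which is immediate from the monotonicity of the family $\{\R_{\beta}(X_{W},\sigma)\}_{\beta}$.
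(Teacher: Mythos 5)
Your proposal is correct and is essentially identical to the paper's own proof: both deduce the result by combining Claim~\ref{claim_sofico_1} (which gives $\R_{2}(X_{W},\sigma)\neq X_{W}^{2}$) with Claim~\ref{claim_sofico_2} (which gives $\R_{3}(X_{W},\sigma)=X_{W}^{2}$). The extra remark about monotonicity handling the classes $0$ and $1$ is a fine, if implicit, addition.
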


\begin{proof}[Proof of Proposition~\protect\ref{construction_sofico}]
	By Claim~\ref{claim_sofico_1} we have $\R_{2}(X_{W},\sigma )\neq X_{W}^{2}$
	and by Claim~\ref{claim_sofico_2} we have $\R_{3}(X_{W},\sigma )=X_{W}^{2}.$
\end{proof}

The Worm Shift has dense strongly periodic configurations and therefore
admits a fully supported measure. However, $X_{W}$ is not an SFT. The next construction uses similar ideas to the previous one but does indeed yield an SFT.

\begin{theorem}\label{teorema_asintoticodelflow}
There is a $\mathbb{Z}^{3}$-SFT in the asymptotic class $3$.
\end{theorem}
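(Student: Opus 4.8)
The plan is to lift the Worm Shift construction to three dimensions, replacing one--dimensional \emph{worms} by two--dimensional \emph{sheets} (``waves''). I would introduce a $\ZZ^{3}$--SFT $\Omega$ over an alphabet consisting of a blank symbol $\square$ together with a family of decorated unit cubes carrying Wang--type face--matching rules, designed so that in every $c\in\Omega$ the maximal connected sets of non--blank cells (the \emph{sheets}) satisfy: (1) each sheet is contained in a slab $\{x,x+1\}\times\ZZ^{2}$ for some $x\in\ZZ$ --- the matching rules let a sheet propagate freely in the $y$ and $z$ directions while allowing at most a single jog of $\pm1$ in the $x$ direction, exactly as worms behave between two columns; (2) each sheet is locally two--dimensional (a surface rather than a solid block of non--blank cells); (3) each sheet is infinite; and (4) each sheet is \emph{boundaryless}, i.e.\ the rules forbid a sheet cell from having a blank neighbour in a direction in which the local picture would otherwise dangle, so a sheet can never terminate. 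I would also arrange that $\square^{\ZZ^{3}}$, the configurations whose non--blank cells are exactly a union of finitely many \emph{straight} sheets $\{x\}\times\ZZ^{2}$, and their periodic deformations all lie in $\Omega$; in particular $\Omega$ has dense strongly periodic points and hence a fully supported invariant measure.

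With $\Omega$ in hand I would reproduce, \emph{mutatis mutandis}, the five--step argument used for the Worm Shift. The first half is $\R_{2}(\Omega,\sigma)\neq\Omega^{2}$, whose core is the analogue of Claim~\ref{claim_sofico_3}: if $c\in\Omega$ and $(c,\square^{\ZZ^{3}})\in\R_{1}(\Omega,\sigma)$ then $c=\square^{\ZZ^{3}}$. Suppose $c$ has a non--blank cell $v$ with first coordinate $x$. Since $(c,\square^{\ZZ^{3}})\in\overline{\R(\Omega,\sigma)}$, for every finite box $B\ni v$ there is $(c',d')\in\R(\Omega,\sigma)$ with $c'|_{B}=c|_{B}$ and $d'|_{B}$ blank. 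Then $v$ lies in a sheet $S$ of $c'$ which, by (1)--(4), is infinite, boundaryless and contained in $\{x-1,x,x+1\}\times\ZZ^{2}$. As $c'=d'$ off a finite set and $S$ is infinite, $d'$ contains infinitely many cells of $S$, hence $d'$ has a sheet $S'$ meeting $\{x-1,x,x+1\}\times\ZZ^{2}$ and therefore contained in $\{x-2,\dots,x+2\}\times\ZZ^{2}$. Here I use properties (2) and (4): an infinite boundaryless surface confined to a bounded band of $x$--values but unbounded in the $(y,z)$ directions must, up to a bounded sideways displacement, be flat, so $S'$ meets every sufficiently large box inside $\{x-2,\dots,x+2\}\times\ZZ^{2}$; taking $B$ large enough forces $d'$ to be non--blank on $B$, contradicting $d'|_{B}$ blank. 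Hence $c=\square^{\ZZ^{3}}$, and then, exactly as in Claim~\ref{claim_sofico_1}, a finite chain of $\R_{1}$--steps from a configuration carrying a sheet to $\square^{\ZZ^{3}}$ is impossible, so $(c,\square^{\ZZ^{3}})\notin\R_{2}(\Omega,\sigma)$ for every $c\neq\square^{\ZZ^{3}}$; in particular $\R_{2}(\Omega,\sigma)\neq\Omega^{2}$, so $\Omega$ is not in the asymptotic class $0$, $1$ or $2$.

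The second half is $\R_{3}(\Omega,\sigma)=\Omega^{2}$, following Steps 1--5 of Claim~\ref{claim_sofico_2}. An $\R_{1}$--step slides one straight sheet from $\{n\}\times\ZZ^{2}$ to $\{n+1\}\times\ZZ^{2}$: the witnessing configurations carry a single sheet that is flat at $\{n\}$ outside a large box and bent over into $\{n+1\}$ inside it --- these are asymptotic to the flat--at--$n$ sheet and converge to the flat--at--$(n+1)$ sheet, the sheet analogue of the $e^{(m)}$ of Step~1. Chaining such moves, and similarly straightening a bent sheet and re--orienting it into a fixed coordinate slab, one shows --- using that $\R_{2}(\Omega,\sigma)$ is an equivalence relation --- that any two configurations with the same finite number of sheets are $\R_{2}(\Omega,\sigma)$--equivalent, both being $\R_{2}(\Omega,\sigma)$--equivalent to the standard configuration with that many parallel straight sheets placed far apart. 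Finally, such configurations are dense in $\Omega^{2}$: given $(c,d)$ and a window, truncate, complete both to elements of $\Omega$, and append to the one with fewer sheets the missing number of straight sheets placed outside the window. Since $\R_{3}(\Omega,\sigma)$ is closed, $\R_{3}(\Omega,\sigma)=\Omega^{2}$. Combining the two halves, $\Omega$ is in the asymptotic class $3$; and as $\Omega$ is an expansive $\ZZ^{3}$--SFT with the POTP admitting a fully supported invariant measure, Corollary~\ref{corolario_delaescalerita} moreover places it in the CPE class $3$.

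The main obstacle is the design of the face--matching rules of $\Omega$, which must meet two competing demands. They must make ``locally two--dimensional'' and ``boundaryless'' \emph{finite--type} conditions and, more delicately, make sheets \emph{semi--rigid}: no sheet may be destroyed, created, or pushed off its slab by a single finitely--supported change near a fixed cell. This rigidity --- concretely, the ``flat up to bounded displacement'' property of an infinite boundaryless surface, which is what turns an infinite leftover of a sheet into a genuine contradiction with local blankness --- is precisely what powers $\R_{2}(\Omega,\sigma)\neq\Omega^{2}$. At the same time the rules must be permissive enough that sheets can be slid, straightened and re--oriented through chains of asymptotic steps, which is what drives $\R_{3}(\Omega,\sigma)=\Omega^{2}$ and is where the third dimension is genuinely used. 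Threading this needle --- in particular obtaining an honest SFT rather than a merely sofic subshift, which is exactly the point at which the $\ZZ^{2}$ Worm Shift fails --- is the technical heart of the construction.
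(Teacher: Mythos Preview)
Your outline correctly identifies the strategy and the shape of the five-step argument, but you explicitly decline to give the construction, writing that designing the rules so that semi-rigidity becomes a finite-type condition is ``the technical heart'' and ``the main obstacle'' --- and then not supplying them. The theorem's entire content is that such an SFT exists; without the rules, what remains is a list of properties a hypothetical subshift should have together with an informal argument that those properties would suffice. In particular, your key geometric claim --- that an infinite boundaryless surface confined to a slab of bounded width must meet every sufficiently large box in that slab --- does not follow from ``locally two-dimensional'' and ``boundaryless'' as you phrase them, and in the paper this is exactly the statement (Claim~\ref{claim.twoface}) whose proof requires the full force of the construction.

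The paper's Good Wave Shift $\GW$ is not a direct three-dimensional lift of the Worm Shift and does not try to enforce ``sheet $\subset\{x,x+1\}\times\ZZ^{2}$'' by local rules on the sheet cells themselves. Instead it uses two layers: a $\ZZ^{2}$-SFT $X_{\mathrm{struct}}$ placed in each $z$-plane, whose configurations partition the plane into nested squares with two-colored wire borders, and a cube layer recording the wave as the graph of a $1$-Lipschitz function $\varphi\colon\ZZ^{2}\to\ZZ$. The wire colors dictate where the wave is allowed to change height; the crucial isolation lemma (Claim~\ref{claim.struct_isolation}) shows that the geometry of $X_{\mathrm{struct}}$ forces any $z$-plane carrying a non-white tile to be followed by two entirely white $z$-planes, so a wave can interact with at most one structure plane and hence its total height variation is at most~$1$. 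This indirection --- putting the color information in a separate, intrinsically two-dimensional layer rather than on the wave itself --- is precisely what circumvents the soficity obstacle you identify, and it is the idea your proposal is missing.
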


We begin by defining a $\ZZ^2$-SFT $X_{\mbox{struct}}$ on the alphabet $\Sigma$ given by all the tiles that can be obtained by coloring the thick black lines of the squares seen on Figure~\ref{fig_alph} with the colors blue and red (in all subsequent figures red is portrayed with a light tone and blue in a dark tone to facilitate the lecture in grayscale). The adjacency rules are those of Wang tilings, namely, two tiles can be adjacent to each other if and only if the lines and colors match along their shared border.
	
	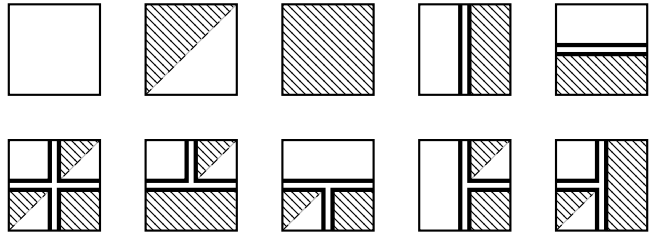
\begin{figure}[h!]
		
		\centering
		\begin{tikzpicture}[scale=0.6]

\def \e {0.1};

\begin{scope}[shift = {(0,0)}]
\draw[thick] (0,0) rectangle (2,2);
\end{scope}

\begin{scope}[shift = {(3,0)}]
\filldraw [black!30, pattern=north west lines] (0,0) -- (2,2) -- (0,2);
\draw[dashed] (0,0) -- (2,2);
\draw[thick] (0,0) rectangle (2,2);
\end{scope}

\begin{scope}[shift = {(6,0)}]
\filldraw [black!30, pattern=north west lines] (0,0) rectangle (2,2);
\draw[thick] (0,0) rectangle (2,2);
\end{scope}

\begin{scope}[shift = {(9,0)}]
\filldraw [black!30, pattern=north west lines] (1.1,0) rectangle (2,2);
\draw[ultra thick] (1-\e,0) -- (1-\e,2);
\draw[ultra thick] (1+\e,0) -- (1+\e,2);
\draw[thick] (0,0) rectangle (2,2);
\end{scope}

\begin{scope}[shift = {(12,0)}]
\filldraw [black!30, pattern=north west lines] (2,0.9) rectangle (0,0);
\draw[ultra thick] (0,1-\e) -- (2,1-\e);
\draw[ultra thick] (0,1+\e) -- (2,1+\e);
\draw[thick] (0,0) rectangle (2,2);
\end{scope}

\begin{scope}[shift = {(3,0)}]

\begin{scope}[shift = {(-3,-3)}]
\filldraw [black!30, pattern=north west lines] (1.1,1.1) -- (2,2) -- (1.1,2);
\filldraw [black!30, pattern=north west lines] (0.9,0.9) -- (0,0) -- (0,0.9);
\filldraw [black!30, pattern=north west lines] (1.1,0.9) rectangle (2,0);
\draw[ultra thick] (1-\e,0) -- (1-\e,1-\e) -- (0,1-\e);
\draw[ultra thick] (1-\e,2) -- (1-\e,1+\e) -- (0,1+\e);
\draw[ultra thick] (1+\e,0) -- (1+\e,1-\e) -- (2,1-\e);
\draw[ultra thick] (1+\e,2) -- (1+\e,1+\e) -- (2,1+\e);
\draw[dashed] (0,0) -- (0.8,0.8);
\draw[dashed] (1.2,1.2) -- (2,2);
\draw[thick] (0,0) rectangle (2,2);
\end{scope}

\begin{scope}[shift = {(0,-3)}]
\filldraw [black!30, pattern=north west lines] (1.1,1.1) -- (2,2) -- (1.1,2);
\filldraw [black!30, pattern=north west lines] (2,0.9) rectangle (0,0);
\draw[ultra thick] (1-\e,2) -- (1-\e,1+\e) -- (0,1+\e);
\draw[ultra thick] (1+\e,2) -- (1+\e,1+\e) -- (2,1+\e);
\draw[ultra thick] (0,1-\e) -- (2,1-\e);
\draw[dashed] (1.2,1.2) -- (2,2);
\draw[thick] (0,0) rectangle (2,2);
\end{scope}

\begin{scope}[shift = {(3,-3)}]
\filldraw [black!30, pattern=north west lines] (0.9,0.9) -- (0,0) -- (0,0.9);
\filldraw [black!30, pattern=north west lines] (1.1,0.9) rectangle (2,0);
\draw[ultra thick] (1-\e,0) -- (1-\e,1-\e) -- (0,1-\e);
\draw[ultra thick] (1+\e,0) -- (1+\e,1-\e) -- (2,1-\e);
\draw[ultra thick] (0,1+\e) -- (2,1+\e);
\draw[dashed] (0,0) -- (0.8,0.8);
\draw[thick] (0,0) rectangle (2,2);
\end{scope}

\begin{scope}[shift = {(6,-3)}]
\filldraw [black!30, pattern=north west lines] (1.1,1.1) -- (2,2) -- (1.1,2);
\filldraw [black!30, pattern=north west lines] (1.1,0.9) rectangle (2,0);
\draw[ultra thick] (1+\e,0) -- (1+\e,1-\e) -- (2,1-\e);
\draw[ultra thick] (1+\e,2) -- (1+\e,1+\e) -- (2,1+\e);
\draw[ultra thick] (1-\e,0) -- (1-\e,2);
\draw[dashed] (1.2,1.2) -- (2,2);
\draw[thick] (0,0) rectangle (2,2);
\end{scope}

\begin{scope}[shift = {(9,-3)}]
\filldraw [black!30, pattern=north west lines] (1.1,0) rectangle (2,2);
\filldraw [black!30, pattern=north west lines] (0.9,0.9) -- (0,0) -- (0,0.9);
\draw[ultra thick] (1-\e,0) -- (1-\e,1-\e) -- (0,1-\e);
\draw[ultra thick] (1-\e,2) -- (1-\e,1+\e) -- (0,1+\e);
\draw[ultra thick] (1+\e,0) -- (1+\e,2);
\draw[dashed] (0,0) -- (0.8,0.8);
\draw[thick] (0,0) rectangle (2,2);
\end{scope}
\end{scope}
\end{tikzpicture}
		
	\caption{The alphabet, up to coloring of the wires.}
	\label{fig_alph}
\end{figure}
The first three squares are special and are called a \textbf{white tile}, a \textbf{diagonal tile} and a \textbf{dash tile} respectively. We refer to the rest of the squares as \textbf{wire tiles}. For a wire tile $t$ we define the values $\mathsf{SW}(t),\mathsf{SE}(t),\mathsf{NW}(t),\mathsf{NE}(t)$ as $0$ if the color which is closest to the southwest, southeast, northwest and northeast corners of the square $t$ is blue and as $1$ if it is red. For example, the values associated to the following tiles are: $$ \begin{tikzpicture}[scale = 0.6]\def \e {0.1};
\node at (-1,1) {$t_1 =$};
\filldraw [black!30, pattern=north west lines] (1.1,1.1) -- (2,2) -- (1.1,2);
\filldraw [black!30, pattern=north west lines] (0.9,0.9) -- (0,0) -- (0,0.9);
\filldraw [black!30, pattern=north west lines] (1.1,0.9) rectangle (2,0);
\draw[ ultra thick, red!50] (1-\e,0) -- (1-\e,1-\e) -- (0,1-\e);
\draw[ ultra thick, blue] (1-\e,2) -- (1-\e,1+\e) -- (0,1+\e);
\draw[ ultra thick, blue] (1+\e,0) -- (1+\e,1-\e) -- (2,1-\e);
\draw[ ultra thick, red!50] (1+\e,2) -- (1+\e,1+\e) -- (2,1+\e);
\draw[dashed] (0,0) -- (0.8,0.8);
\draw[dashed] (1.2,1.2) -- (2,2);
\draw[thick] (0,0) rectangle (2,2);
\node at (3,1) {$\mbox{has}$};
\node at (6,0.5) {$\mathsf{SW}(t_1) = 1$};
\node at (6,1.5) {$\mathsf{NW}(t_1) = 0$};
\node at (10,0.5) {$\mathsf{SE}(t_1) = 0$};
\node at (10,1.5) {$\mathsf{NE}(t_1) = 1$};
\end{tikzpicture}$$
$$ \begin{tikzpicture}[scale = 0.6]\def \e {0.1};
\node at (-1,1) {$t_2 =$};
\filldraw [black!30, pattern=north west lines] (1.1,1.1) -- (2,2) -- (1.1,2);
\filldraw [black!30, pattern=north west lines] (1.1,0.9) rectangle (2,0);
\draw[ultra thick, red!50] (1+\e,0) -- (1+\e,1-\e) -- (2,1-\e);
\draw[ultra thick, blue] (1+\e,2) -- (1+\e,1+\e) -- (2,1+\e);
\draw[ultra thick, blue] (1-\e,0) -- (1-\e,2);
\draw[dashed] (1.2,1.2) -- (2,2);
\draw[thick] (0,0) rectangle (2,2);
\node at (3,1) {$\mbox{has}$};
\node at (6,0.5) {$\mathsf{SW}(t_1) = 0$};
\node at (6,1.5) {$\mathsf{NW}(t_1) = 0$};
\node at (10,0.5) {$\mathsf{SE}(t_1) = 1$};
\node at (10,1.5) {$\mathsf{NE}(t_1) = 0$};
\end{tikzpicture}$$
$$ \begin{tikzpicture}[scale = 0.6]\def \e {0.1};
\node at (-1,1) {$t_3 =$};
\filldraw [black!30, pattern=north west lines] (2,0.9) rectangle (0,0);
\draw[ultra thick, red!50] (0,1-\e) -- (2,1-\e);
\draw[ultra thick, blue] (0,1+\e) -- (2,1+\e);
\draw[thick] (0,0) rectangle (2,2);
\node at (3,1) {$\mbox{has}$};
\node at (6,0.5) {$\mathsf{SW}(t_1) = 1$};
\node at (6,1.5) {$\mathsf{NW}(t_1) = 0$};
\node at (10,0.5) {$\mathsf{SE}(t_1) = 1$};
\node at (10,1.5) {$\mathsf{NE}(t_1) = 0$};
\end{tikzpicture}$$
The global structure of a configuration in $X_{\mbox{struct}}$ is that of a partition of $\ZZ^2$ into squares with colored contours. Note that besides from the special case of configurations which do not contain wire tiles, the contour structure gives a unique way to color the associated partition into blue and red zones. On Figure~\ref{fig_pattern} we show an example of an admissible local configuration.


\begin{figure}[h!]
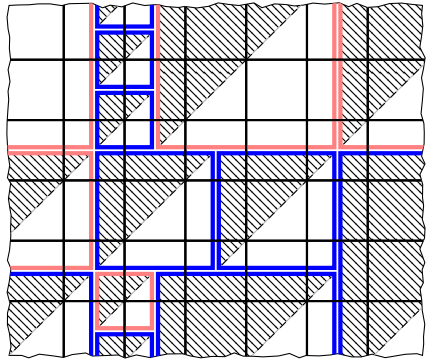

	\centering
	\include{config}
	
	\caption{A valid pattern in $X_{\mbox{struct}}$. The contours of each region determine blue and red ``zones''}
	\label{fig_pattern}
\end{figure}
	
	Before introducing our $\ZZ^3$ example, we need the following claim.
	
	\begin{claim}\label{claim.blueskyes}
		For any $n \geq 1$ and $p \in L_{[-n,n]^2}(X_{\mbox{struct}})$ there exists $c \in X_{\mbox{struct}}$ such that:
		\begin{enumerate}
			\item $c|_{[-n,n]^2} =p$.
			\item For every $u \in \ZZ^2$ such that $||u||_{\infty} > 4n$ we have $$ \begin{tikzpicture}[scale = 0.6]\def \e {0.1};
			\node at (-1,1) {$c_{u} =$};
			\filldraw [black!30, pattern=north west lines] (1.1,1.1) -- (2,2) -- (1.1,2);
			\filldraw [black!30, pattern=north west lines] (0.9,0.9) -- (0,0) -- (0,0.9);
			\filldraw [black!30, pattern=north west lines] (1.1,0.9) rectangle (2,0);
			\draw[ ultra thick, blue] (1-\e,0) -- (1-\e,1-\e) -- (0,1-\e);
			\draw[ ultra thick, blue] (1-\e,2) -- (1-\e,1+\e) -- (0,1+\e);
			\draw[ ultra thick, blue] (1+\e,0) -- (1+\e,1-\e) -- (2,1-\e);
			\draw[ ultra thick, blue] (1+\e,2) -- (1+\e,1+\e) -- (2,1+\e);
			\draw[dashed] (0,0) -- (0.8,0.8);
			\draw[dashed] (1.2,1.2) -- (2,2);
			\draw[thick] (0,0) rectangle (2,2);
			\end{tikzpicture}$$
		\end{enumerate}
		
	\end{claim}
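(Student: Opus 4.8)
The plan is to assemble $c$ from three parts: the prescribed pattern $p$ on $[-n,n]^2$, the ``all--blue grid'' $\mathbf b\in\Sigma^{\ZZ^2}$ (the configuration all of whose symbols are the all--blue cross tile displayed in the statement) far from the origin, and an intermediate annulus whose only job is to close off every partition square that $p$ reaches into. First I would record that $\mathbf b\in X_{\mbox{struct}}$: any two translates of the all--blue cross tile share, along a common edge, the same blue colour and the same line endpoints, so every Wang adjacency constraint is met; geometrically $\mathbf b$ is the partition of $\ZZ^2$ into unit squares, each carrying a blue contour. It therefore suffices to produce $c\in X_{\mbox{struct}}$ with $c|_{[-n,n]^2}=p$ and $c_u=\mathbf b_u$ whenever $\|u\|_\infty>4n$.

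The core step is a surgery on a configuration realizing $p$. As $p\in L_{[-n,n]^2}(X_{\mbox{struct}})$ there is $c_0\in X_{\mbox{struct}}$ with $c_0|_{[-n,n]^2}=p$; only finitely many partition squares of $c_0$ meet $[-n,n]^2$, and for any such square not already contained in the window only a proper part of its contour is visible there, which merely imposes a lower bound on its side length. Inspecting the alphabet of Figure~\ref{fig_alph}, one verifies that every such square can be re--completed into an honest square of the smallest side length compatible with the tiles of $p$ on $\partial[-n,n]^2$ — the colours of the newly placed contour tiles being chosen to agree with those already committed to by $p$ — that after doing this for all of them every partition square meeting $[-n,n]^2$ is contained in $[-4n,4n]^2$, that outside $[-4n,4n]^2$ the configuration then lies in a single zone (which, after globally interchanging the two colours if necessary and using that the colouring is determined by the contour structure, we may assume is the blue zone), and that this outer blue zone can be filled by the all--blue grid $\mathbf b$. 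I expect this step — confirming that the local rules of $X_{\mbox{struct}}$ genuinely allow one to terminate every partially seen square within a bounded multiple of $n$ of the window and to be engulfed by the blue zone there — to be the main obstacle, and it is the only place where the explicit form of $X_{\mbox{struct}}$ is used.

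Finally, carrying out this surgery — keeping $p$ on $[-n,n]^2$, placing the closed--off squares together with their surrounding blue zone on the annulus, and the all--blue grid $\mathbf b$ on the complement of $[-4n,4n]^2$ — yields a configuration $c$. Since all the adjacencies introduced are legal (by the inspection above, and because $\mathbf b$ is itself a configuration of $X_{\mbox{struct}}$), we have $c\in X_{\mbox{struct}}$; and by construction $c|_{[-n,n]^2}=p$ while $c_u$ is the all--blue cross tile for every $u$ with $\|u\|_\infty>4n$. This is exactly the statement of the claim.
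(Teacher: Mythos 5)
Your proposal is correct and follows essentially the same route as the paper's proof: complete every partially visible square inside $[-4n,4n]^2$ (the paper makes the quantitative point explicit — a partial square whose visible side has length $k\le 2n+1$ can be closed off with border length at most $2k$, which is why $4n$ suffices), colour the newly introduced boundary wires blue, and extend by the all-blue cross tile. The only cosmetic difference is your ``globally interchange the two colours if necessary'' step, which is never needed (and, taken literally, would alter $p$): all wires outside the completed pattern are newly placed, so they can simply be chosen blue.
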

	
	\begin{proof}
		Any partial square appearing in $p$ can be extended to a complete square. More precisely, if the largest length of a side of a partial square is $k$, the completed region can be constructed with a border length of at most $2k$ (the worst case given by a NW or SE corner of a square with only greys or whites inside respectively). As the length of a side of a partial square appearing in $p$ is at most $2n+1$, one can always find such a completion inside the support $[-4n, 4n]$. Let $\widetilde{p}$ be a pattern obtained after completing all of the partial squares appearing in $p$. One can choose the color of the wires bounding $\widetilde{p}$ to be only blue, and put an outgoing wire everywhere. This pattern can be extended to a $\ZZ^2$ configuration as demanded. This procedure is illustrated in Figure~\ref{fig_completion}.
	\end{proof}

	\begin{figure}[h!]
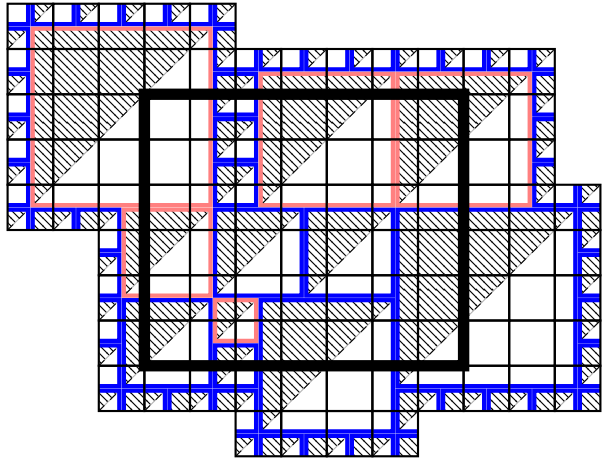

		\centering
		\include{config3}
		\caption{Completion of the pattern From Figure~\ref{fig_pattern} to a pattern which can be extended using only the blue cross wire tile.}
		\label{fig_completion}
	\end{figure}
	
%
%
	
	 Consider now the alphabet $C = \{ \cubito, \nubito \}$ of filled cubes and empty cubes. We define the \textbf{Good Wave Shift $\GW$} as the $\ZZ^3$-SFT $\GW$ over the alphabet $\Sigma \times C$ by the following rules. 
	
	Given $c \in (\Sigma \times C)^{\ZZ^3}$ we denote by $\pi_1(c)$ and $\pi_2(c)$ its projection to the first and second coordinate respectively. For $k \in \ZZ$ we define the $z$-plane at $k$ as the set $\ZZ^2 \times \{k\} \subset \ZZ^3$. If we do not wish to specify $k$ we will just refer to a $z$-plane.
	\begin{enumerate}
		\item ($\Sigma$-Structure I) For each $k \in \ZZ$ and $c \in \GW$, $\pi_1(c)$ must contain a valid configuration of $X_{\mbox{struct}}$ in the $z$-plane at $k$.
		\item ($\Sigma$-Structure II) If a non-white tile from $\Sigma$ appears in position $(x,y,z)$, then the white tile must appear at $(x,y,z+1)$ and $(x,y,z+2)$.
		\item ($C$-Structure I) If a $\cubito$ appears at coordinate $(x,y,z)$, then a $\nubito$ must appear at $(x,y,z+1)$ and $(x,y,z+2)$.
		\item ($C$-Structure II) If a $\cubito$ appears at coordinate $(x,y,z)$, then:
		\begin{itemize}
			\item Exactly one $\cubito$ appears in either $(x+1,y,z-1)$, $(x+1,y,z)$ or $(x+1,y,z+1)$.
			\item Exactly one $\cubito$ appears in either $(x-1,y,z-1)$, $(x-1,y,z)$ or $(x-1,y,z+1)$.
			\item Exactly one $\cubito$ appears in either $(x,y+1,z-1)$, $(x,y+1,z)$ or $(x,y+1,z+1)$.
			\item Exactly one $\cubito$ appears in either $(x,y-1,z-1)$, $(x,y-1,z)$ or $(x,y-1,z+1)$.
		\end{itemize}
		\item ($\Sigma \times C$-Structure I) If a non-white tile appears at position $(x,y,z) \in \ZZ^3$, then a $\cubito$ must appear either at $(x,y,z)$ or $(x,y,z+1)$.
		\item ($\Sigma \times C$-Structure II) If a wire tile $t\in \Sigma$ appears at position $(x,y,z) \in \ZZ^3$, then:
		\begin{itemize}
			\item $\cubito$ appears at $(x,y,z+SW(t))$.
			\item $\cubito$ appears at  $(x+1,y,z+SE(t))$.
			\item $\cubito$ appears at $(x,y+1,z+NW(t))$.
			\item $\cubito$ appears at $(x+1,y+1,z+NE(t))$.
		\end{itemize}
		\item ($\Sigma \times C$-Structure III) If a $\cubito$ appears at $(x,y,z)$, then:
		\begin{itemize}
			\item If $\cubito$ appears at $(x+1,y,z+1)$, then there is a wire tile $t$ appearing at $(x,y,z)$ with $SW(t)=0$ and $SE(t) = 1$.
			\item If $\cubito$ appears at $(x+1,y,z-1)$, then there is a wire tile $t$ appearing at $(x,y,z-1)$ with $SW(t)=1$ and $SE(t) = 0$.
			\item If $\cubito$ appears at $(x,y+1,z+1)$, then there is a wire tile $t$ appearing at $(x,y,z)$ with $SW(t)=0$ and $NW(t) = 1$.
			\item If $\cubito$ appear sat $(x,y+1,z-1)$, then there is a wire tile $t$ appearing at $(x,y,z-1)$ with $SW(t)=1$ and $NW(t) = 0$.
		\end{itemize}

	\end{enumerate}
	
	Clearly all of these rules can be codified with a finite amount of forbidden patterns, therefore $\GW$ is a $\ZZ^3$-SFT. This concludes the construction, we will now prove a series of claims that are stepping stones towards Theorem~\ref{teoremabonito}.

	\begin{claim}\label{claim.struct_isolation}
		Let $c \in \GW$ and suppose $\pi_1(c)_{(x,y,z)}\neq \square$. Then for every $(i,j)\in \ZZ^2$ $\pi_1(c)_{(i,j,z+1)} = \pi_1(c)_{(i,j,z+2)} = \square$.
	\end{claim}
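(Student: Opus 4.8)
The plan is to settle the single column $(i,j)=(x,y)$ directly from the local rules of $\GW$, and then to exploit the rigidity of the cube layer to force whiteness in every other column. Since $\pi_1(c)_{(x,y,z)}\neq\square$, rule ($\Sigma$-Structure II) applied at $(x,y,z)$ gives at once $\pi_1(c)_{(x,y,z+1)}=\pi_1(c)_{(x,y,z+2)}=\square$, which is the claim in the column $(x,y)$. By ($\Sigma$-Structure I) the restrictions of $\pi_1(c)$ to the $z$-planes at $z+1$ and at $z+2$ are configurations of $X_{\mbox{struct}}$, so it remains to show that both of them are the all-white configuration.

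For an arbitrary column $(i,j)$ I would proceed as follows. The non-white tile at $(x,y,z)$ forces, by ($\Sigma \times C$-Structure I), a filled cube $\cubito$ at $(x,y,z)$ or at $(x,y,z+1)$; by ($C$-Structure II) this cube cannot be isolated, so it belongs to a $4$-connected ``wave'' of filled cubes whose height changes by at most one between neighbouring columns of $\ZZ^2$ and which therefore meets every column. The crucial point is that this wave lies exactly on the plane $\ZZ^2\times\{z\}$. Granting this, ($C$-Structure I) forbids a filled cube at height $z+1$ or $z+2$ in any column of $\ZZ^2$, and then ($\Sigma \times C$-Structure I) forbids any non-white $\Sigma$-tile at any position of the planes at $z+1$ and at $z+2$ --- which is exactly what we want.

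The main obstacle is proving that the wave lies on the plane at $z$. A priori, ($\Sigma \times C$-Structure I) and the wire-to-cube rule ($\Sigma \times C$-Structure II) only confine the relevant cubes to heights $z$ and $z+1$; one has to rule out both that the wave sits at $z+1$ near $(x,y)$ and that it steps up to $z+1$ somewhere else (over such a step, or over a stretch flat at height $z+1$, one could otherwise attempt to introduce non-white wire tiles into the plane at $z+1$). This is where the converse rule ($\Sigma \times C$-Structure III) comes in: any height change of the wave forces a wire tile of $\Sigma$ at the lower of the two heights, whose corner values $\mathsf{SW},\mathsf{SE},\mathsf{NW},\mathsf{NE}$ are in turn pinned, via ($\Sigma \times C$-Structure II), to the heights of the wave at the adjacent columns; feeding such a tile through ($\Sigma$-Structure II) produces whiteness one level higher, so an induction on $\|(i,j)-(x,y)\|_\infty$, with the already-established column $(x,y)$ as base case, propagates both the flatness of the wave and the whiteness of the shadow planes outward from $(x,y)$: a violation closest to $(x,y)$ would, after a short chase through these rules, yield a non-white $\Sigma$-tile in a plane above $z$ strictly closer to $(x,y)$, contradicting the minimality. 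Carrying out this induction carefully is the crux; everything else is a direct unwinding of the defining forbidden patterns of $\GW$.
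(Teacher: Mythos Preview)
Your ``crucial point'' --- that the wave containing the cube forced by the non-white tile at $(x,y,z)$ lies flatly in $\ZZ^2\times\{z\}$ --- is false, so the induction cannot work. If the tile at $(x,y,z)$ is a wire tile $t$ with $\mathsf{SW}(t)=1$, rule ($\Sigma\times C$-Structure II) places the cube at $(x,y,z+1)$, not at $(x,y,z)$; more generally the wave attached to a non-trivial $X_{\mbox{struct}}$-layer at height $z$ sits at height $z+1$ over the red zones and at height $z$ over the blue zones, so your base case already fails. The weaker statement that the wave is confined to $\{z,z+1\}$ is precisely Claim~\ref{claim.twoface}, which the paper proves \emph{using} the present claim, so appealing to it here would be circular; and even granting it, a cube at $(i,j,z+1)$ is perfectly compatible via ($\Sigma\times C$-Structure I) with a non-white tile at $(i,j,z+1)$, so the $z{+}1$ plane is not handled. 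The fallback mechanism you sketch (``a violation closest to $(x,y)$ yields one strictly closer'') is also unjustified: a non-white tile at $(i,j,z+1)$ lives in an $X_{\mbox{struct}}$-structure on the $z{+}1$ plane, and nothing in the rules forces that structure to extend toward $(x,y)$ rather than away from it.

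The paper's proof bypasses the cube layer entirely; it is a purely two-dimensional argument about $X_{\mbox{struct}}$ and rule ($\Sigma$-Structure II). One introduces the map $\gamma$ recording whether a position lies in an \emph{unbounded} $4$-connected white component, classifies the finitely many shapes such components can take, and observes that two $\gamma$-images can satisfy ``gray in one implies white in the other'' only if one of them is all-white. Rule ($\Sigma$-Structure II), together with the fact that in $X_{\mbox{struct}}$ a bounded region surrounded by white tiles must itself be white, shows that gray (in $\gamma$) at height $z$ forces white (in $\gamma$) at heights $z+1$ and $z+2$; since the $z$-plane is not all white, the complementarity observation forces the $z{+}1$ and $z{+}2$ planes to be entirely white.
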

	
	\begin{proof}
		Let us define a map $\gamma :  X_{\mbox{struct}} \to \{\begin{tikzpicture}
		\draw (0,0) rectangle (0.25,0.25);
		\end{tikzpicture},\begin{tikzpicture}
		\filldraw [black!30] (0,0) rectangle (0.25,0.25);
		\draw (0,0) rectangle (0.25,0.25);
		\end{tikzpicture}  \}^{\ZZ^2}$ by declaring for $d \in X_{\mbox{struct}}$ that $\gamma(d)_{(x,y)} = \begin{tikzpicture}
		\draw (0,0) rectangle (0.25,0.25);
		\end{tikzpicture}$ if and only if $(x,y)$ belongs to an unbounded $4$-connected component of $d^{-1}(\begin{tikzpicture}
		\draw (0,0) rectangle (0.25,0.25);
		\end{tikzpicture})$. That is, there is an unbounded path $v \colon \NN \to \ZZ^2$ such that $v(0) = (x,y)$ and $d_{v(n)}$ is the white tile for every $n \in \NN$.	
		
		Every unbounded $4$-connected component of white tiles can be obtained as a limit of bounded right triangles in which one or more of the sides are sent to infinity. Therefore there are, up to translation, finitely many possible shapes for a maximal unbounded $4$-connected component of white tiles appearing in a configuration in $X_{\mbox{struct}}$. A section of all seven of these possibilities are shown in Figure~\ref{fig_whiteconfs2}. Therefore, the configurations of $\{\begin{tikzpicture}
		\draw (0,0) rectangle (0.25,0.25);
		\end{tikzpicture},\begin{tikzpicture}
		\filldraw [black!30] (0,0) rectangle (0.25,0.25);
		\draw (0,0) rectangle (0.25,0.25);
		\end{tikzpicture}  \}^{\ZZ^2}$ that can be obtained as images of configurations in $X_{\mbox{struct}}$ under $\gamma$ are those where the white tiles are covering disjoint and $4$-disconnected unions of the white unbounded components shown in Figure~\ref{fig_whiteconfs2}. Note that at most three unbounded components may appear in such a configuration, we illustrate an example with three unbounded components in Figure~\ref{fig_whiteconfs}.
		
		Suppose that $e,e'$ are two configurations in $\gamma(X_{\mbox{struct}})$ satisfying that for each $(x,y)\in \ZZ^2$ if $e_{(x,y)} = \begin{tikzpicture}
		\filldraw [black!30] (0,0) rectangle (0.25,0.25);
		\draw (0,0) rectangle (0.25,0.25);
		\end{tikzpicture}$ then $e'_{(x,y)} = \begin{tikzpicture}
		\draw (0,0) rectangle (0.25,0.25);
		\end{tikzpicture}$. It is not hard to prove, using the description of configurations in $\gamma(X_{\mbox{struct}})$ given above, that either $e = \cuadri$ or $e' = \cuadri$.
		
		\begin{figure}[h!]
			\centering
			\begin{tikzpicture}[scale=0.3]

\begin{scope}[shift = {(0,0)}, scale = 0.8]
\clip[draw,decorate,decoration={random steps, segment length=4pt, amplitude=1pt}] (0.2,0.2) rectangle (10.8,10.8);
\draw (0,0) grid (11,11);
\end{scope}

\begin{scope}[shift = {(10,0)}, scale = 0.8]
\clip[draw,decorate,decoration={random steps, segment length=4pt, amplitude=1pt}] (0.2,0.2) rectangle (10.8,10.8);
\filldraw [black!30] (0,0) -- (11,11) -- (0,11);
\foreach \i in {0,...,10} {\filldraw [black!30] (\i,\i) rectangle +(1,1); }
\draw (0,0) grid (11,11);
\end{scope}

\begin{scope}[shift = {(20,0)}, scale = 0.8]
\clip[draw,decorate,decoration={random steps, segment length=4pt, amplitude=1pt}] (0.2,0.2) rectangle (10.8,10.8);
\filldraw [black!30] (0,0) rectangle (11,6);
\draw (0,0) grid (11,11);
\end{scope}

\begin{scope}[shift = {(30,0)}, scale = 0.8]
\clip[draw,decorate,decoration={random steps, segment length=4pt, amplitude=1pt}] (0.2,0.2) rectangle (10.8,10.8);
\filldraw [black!30] (6,0) rectangle (11,11);
\draw (0,0) grid (11,11);
\end{scope}

\begin{scope}[shift = {(5,-10)}, scale = 0.8]
\clip[draw,decorate,decoration={random steps, segment length=4pt, amplitude=1pt}] (0.2,0.2) rectangle (10.8,10.8);
\filldraw [black!30] (-1,0) -- (10,11) -- (-1,11);
\foreach \i in {0,...,10} {\filldraw [black!30] (\i-1,\i) rectangle +(1,1); }
\filldraw [black!30] (8,0) rectangle (11,11);
\draw (0,0) grid (11,11);
\end{scope}

\begin{scope}[shift = {(15,-10)}, scale = 0.8]
\clip[draw,decorate,decoration={random steps, segment length=4pt, amplitude=1pt}] (0.2,0.2) rectangle (10.8,10.8);
\filldraw [black!30] (-1,0) -- (10,11) -- (-1,11);
\foreach \i in {0,...,10} {\filldraw [black!30] (\i-1,\i) rectangle +(1,1); }
\filldraw [black!30] (0,0) rectangle (11,3);
\draw (0,0) grid (11,11);
\end{scope}

\begin{scope}[shift = {(25,-10)}, scale = 0.8]
\clip[draw,decorate,decoration={random steps, segment length=4pt, amplitude=1pt}] (0.2,0.2) rectangle (10.8,10.8);
\filldraw [black!30] (8,0) rectangle (11,11);
\filldraw [black!30] (0,0) rectangle (11,3);
\draw (0,0) grid (11,11);
\end{scope}

\end{tikzpicture}
			
			\caption{Every unbounded configuration of white tiles in $X_{\mbox{struct}}$ must correspond to one of the above shapes up to translation.}
			\label{fig_whiteconfs2}
		\end{figure}
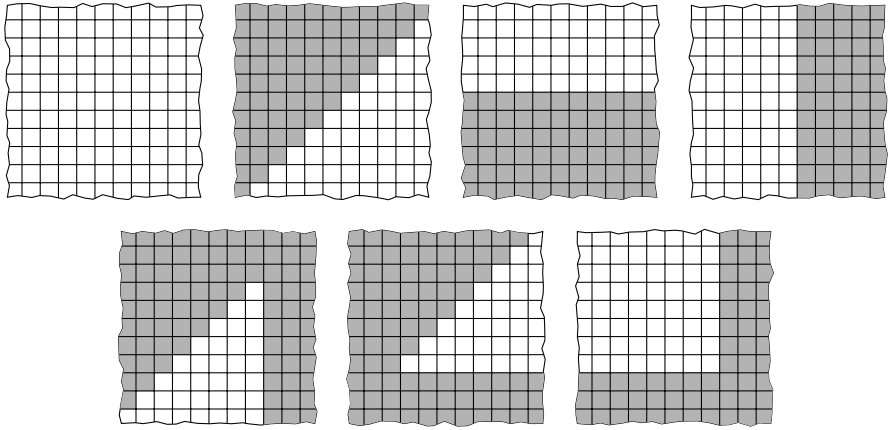
		
		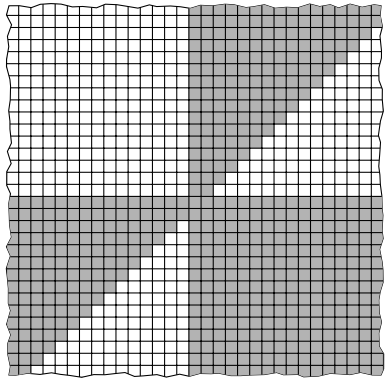
\begin{figure}[h!]
			\centering
			\begin{tikzpicture}[scale=0.2]

\begin{scope}[shift = {(0,0)}, scale = 0.8]
\clip[draw,decorate,decoration={random steps, segment length=4pt, amplitude=1pt}] (0.2,0.2) rectangle (30.8,30.8);
\foreach \i in {0,...,30} {\filldraw [black!30] (\i+1,\i) rectangle +(1,1); }
\filldraw [black!30] (15,0) rectangle  (31,15);
\filldraw [black!30] (0,-1) -- (16,15) -- (0,15);
\filldraw [black!30] (15,31) -- (15,14) -- (32,31);
\draw (0,0) grid (31,31);
\end{scope}
\end{tikzpicture}
			\caption{A possible configuration obtained through $\gamma$.}
			\label{fig_whiteconfs}
		\end{figure}
		
		By rule ($\Sigma$-Structure II) we know that any non-white tile appearing in $\pi_1(c)$ at $(x',y',z') \in \ZZ^3$ forces a white tile in $\pi_1(c)$ at $(x',y',z'+1)$ and $(x',y',z'+2)$. It is also easy to verify that, in $X_{\mbox{struct}}$, if a region is bounded by only white tiles, then it can only contain the white tiles inside the region. Putting these two facts together we obtain that not only non-white tiles force white tiles in the next two coordinates, but also white tiles appearing in bounded regions. Formally, if $\gamma(\pi_1(c)|_{\ZZ^2 \times \{z'\}})_{(x',y')} = \begin{tikzpicture}
		\filldraw [black!30] (0,0) rectangle (0.25,0.25);
		\draw (0,0) rectangle (0.25,0.25);
		\end{tikzpicture}$ then $\gamma(\pi_1(c)|_{\ZZ^2 \times \{z'+1\}})_{(x',y')} = \begin{tikzpicture}
		\draw (0,0) rectangle (0.25,0.25);
		\end{tikzpicture}$ and $\gamma(\pi_1(c)|_{\ZZ^2 \times \{z'+2\}})_{(x',y')} = \begin{tikzpicture}
		\draw (0,0) rectangle (0.25,0.25);
		\end{tikzpicture}$.
		
		Let $i \in \{0,1,2\}$, $c^i = \pi_1(c)|_{\ZZ^2 \times \{z+i\}}$, and $e^i = \gamma(c_i)$. By assumption, we have that $c^0_{(x,y)}\neq \square$ and hence $\gamma(e^0)_{(x,y)} = \begin{tikzpicture}
		\filldraw [black!30] (0,0) rectangle (0.25,0.25);
		\draw (0,0) rectangle (0.25,0.25);
		\end{tikzpicture}$. As shown before, for every $(x',y') \in \ZZ^2$ we have that if $e^0_{(x',y')} = \begin{tikzpicture}
		\filldraw [black!30] (0,0) rectangle (0.25,0.25);
		\draw (0,0) rectangle (0.25,0.25);
		\end{tikzpicture}$ then $e^1_{(x',y')} = \begin{tikzpicture}
		\draw (0,0) rectangle (0.25,0.25);
		\end{tikzpicture}$ and $e^2_{(x',y')} = \begin{tikzpicture}
		\draw (0,0) rectangle (0.25,0.25);
		\end{tikzpicture}$. From this property we conclude that $e^1 = e^2 = \cuadri$. By definition of $\gamma$ we obtain that $c^1 = c^2 = \cuadri$ as required.\end{proof}
	
	
	\begin{definition}
		Given $c \in \GW$ a subset $W \subset \ZZ^3$ is called a \textbf{wave} if for every $w \in W$, $\pi_2(c)_w$ is $\cubito$ and there exists a function $\varphi: \ZZ^2 \to \ZZ$ such that $W = \{ (x,y,z) \in \ZZ^3 \mid z=\varphi(x,y) \}$. and for every $u,v \in \ZZ^2$ $$|\varphi(u)-\varphi(v)| \leq ||u-v||_1$$ Furthermore, a wave $W$ is a \textbf{good wave} if  $$\sup_{u,v \in \ZZ^2} |\varphi(u)-\varphi(v)| \leq 1.$$
		and a \textbf{flat wave} if $\varphi$ is constant.
	\end{definition}
	
	\begin{remark}
		Let $c \in \GW$ and assume that $W_1$ and $W_2$ are two different waves. Using rule ($C$-Structure I) we deduce that $$\inf_{w \in W_1, w' \in W_2}||w-w'||_1 > 2$$
		In particular, all the waves are disjoint.
	\end{remark}
	
	\begin{claim}\label{claim_thereisawave}
		Let $c \in \GW$, if there exists $w \in \ZZ^3$ such that $\pi_2(c)_w = \cubito$ then there exists a unique wave $W$ such that $w \in W$.
	\end{claim}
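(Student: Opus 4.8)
The plan is to construct the wave through $w$ by ``following the filled cubes'' in the four cardinal directions of $\ZZ^2$, and then to check that this procedure is consistent, using the wire layer to rule out inconsistencies.

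\textbf{Setting up the moves.} Fix a coordinate $p=(x,y,z)$ with $\pi_2(c)_p=\cubito$. Applying rule ($C$-Structure~I) at heights $z$, $z-1$ and $z-2$ shows that no column $\{(u_1,u_2)\}\times\ZZ$ carries two filled cubes at vertical distance at most $2$; in particular each column contains at most one filled cube in any window of three consecutive heights. Combined with ($C$-Structure~II) this gives, for each direction $d\in\{E,W,N,S\}$, a unique filled cube $p_d$ in the adjacent column (say $\{(x+1,y)\}\times\ZZ$ for $d=E$) at height within $1$ of $z$, and it is the \emph{only} filled cube of that column in the window $\{z-1,z,z+1\}$. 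From this uniqueness it is immediate that $(p_E)_W=p=(p_W)_E$ and $(p_N)_S=p=(p_S)_N$, i.e.\ opposite moves are mutually inverse, and that every move changes the height by at most $1$.

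\textbf{The commutation relation (the main obstacle).} The heart of the proof is the identity $(p_E)_N=(p_N)_E$ for every filled cube $p$ (and the analogous identities for the other pairs of orthogonal directions), which I would prove by contradiction. Write $p=(x,y,z)$, $p_E=(x+1,y,a)$, $p_N=(x,y+1,b)$, $(p_E)_N=(x+1,y+1,c)$, $(p_N)_E=(x+1,y+1,d)$; then $|a-z|,|b-z|\le 1$ and $|c-a|,|d-b|\le 1$, so $|c-z|,|d-z|\le 2$, and if $c\neq d$ then $|c-d|\ge 3$ by ($C$-Structure~I), whence $|c-d|\in\{3,4\}$. In each of the finitely many resulting cases at least one of $p_E,p_N$ (or a corner cube) is displaced from $z$ by the maximal admissible amount, so ($\Sigma\times C$-Structure~III) forces a wire tile at $(x,y,z)$ or at $(x,y,z-1)$ with prescribed $\mathsf{SW},\mathsf{SE},\mathsf{NW}$ values; propagating this wire tile through ($\Sigma\times C$-Structure~II) then either pins $c$ (resp.\ $d$) to within distance $1$ of $z$, contradicting its assumed value, or produces non-white tiles at two vertically adjacent positions of the column over $(x,y)$, contradicting ($\Sigma$-Structure~II). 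Working through this case analysis is the only delicate step; it is here that the wire layer $X_{\mbox{struct}}$ is indispensable, since ($C$-Structure~I) and ($C$-Structure~II) alone do not prevent a cube from having one ``up'' and one ``down'' neighbour.

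\textbf{Assembling the wave and proving uniqueness.} Let $w=(x_0,y_0,z_0)$. I would define $\varphi\colon\ZZ^2\to\ZZ$ by $\varphi(x_0,y_0)=z_0$ and, for arbitrary $u\in\ZZ^2$, letting $\varphi(u)$ be the height of the filled cube obtained by applying to $w$ the composition of moves along any lattice path from $(x_0,y_0)$ to $u$. Since opposite moves cancel, orthogonal moves commute, and every closed lattice loop in $\ZZ^2$ is a product of unit squares, $\varphi$ is well defined. By construction $(u,\varphi(u))$ is a filled cube for every $u$, and consecutive values of $\varphi$ differ by at most $1$, so $|\varphi(u)-\varphi(v)|\le\|u-v\|_1$; hence $W:=\{(u,\varphi(u)):u\in\ZZ^2\}$ is a wave with $w\in W$. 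For uniqueness, note that for neighbours $u,u'$ the function $\varphi$ satisfies the recursion ``$\varphi(u')$ is the unique height within $1$ of $\varphi(u)$ at which the column over $u'$ carries a filled cube'' (uniqueness by ($C$-Structure~I) and ($C$-Structure~II)). Any wave $W'=\{(u,\varphi'(u)):u\in\ZZ^2\}$ with $w\in W'$ satisfies the same recursion — its Lipschitz condition gives $|\varphi'(u')-\varphi'(u)|\le 1$ for neighbours — and agrees with $\varphi$ at $(x_0,y_0)$, so an induction on $\|u-(x_0,y_0)\|_1$ yields $\varphi'=\varphi$, i.e.\ $W'=W$.
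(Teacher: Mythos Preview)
Your proposal is correct and takes a genuinely different route from the paper. The paper argues by Zorn's lemma: take a maximal $K\subset\ZZ^2$ carrying a $1$-Lipschitz height function $\varphi$ with filled cubes, pick a point $(i,j)\notin K$ adjacent to some $(x_1,y_1)\in K$, and extend $\varphi$ to $(i,j)$ via ($C$-Structure~II); the extension is asserted to contradict maximality without checking that it remains $1$-Lipschitz with respect to all of $K$. You instead construct $\varphi$ explicitly by composing cardinal moves from $w$ and reduce well-definedness to the commutation identity $(p_E)_N=(p_N)_E$, which you establish by a finite case analysis on the possible height gap using ($\Sigma\times C$-Structure~II--III) together with ($\Sigma$-Structure~II). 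Your remark that the wire layer is indispensable here is on point: rules ($C$-Structure~I--II) alone do not prevent the two routes around a unit square from reaching filled cubes in column $(x{+}1,y{+}1)$ at heights $\geq 3$ apart, and it is precisely this consistency check that the paper's maximality step leaves implicit. So your argument is longer but carries the actual content of the claim. For uniqueness, the paper simply invokes the remark preceding the claim that distinct waves are disjoint (forced by ($C$-Structure~I)); your induction on $\|u-(x_0,y_0)\|_1$ via the local recursion reaches the same conclusion and is equally valid.
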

	
	\begin{proof}
		Let $w = (x,y,z)$. Let $K \subset \ZZ^2$ be a maximal set such that there is a $1$-Lipschitz function $\varphi : K \to \ZZ$ satisfying $\varphi(x,y) = z$ and $\pi_2(c)_{(x,y,\varphi(x,y))} = \cubito$. If $K \neq \ZZ^2$ pick $(i,j) \notin K$ at distance $1$ from a point $(x_1,y_1) \in K$. As $\pi_2(c)_{(x_1,y_1,\varphi(x_1,y_1))} = \cubito$, rule ($\Sigma$-Structure II) implies the existence of a $\cubito$ at exactly one of the positions $(i,j,\varphi(x_1,y_1)-1)$, $(i,j,\varphi(x_1,y_1))$ or $(i,j,\varphi(x_1,y_1)+1)$. Therefore one can extend $\varphi$ to a bigger domain contradicting the maximality of $K$. The uniqueness is direct from the previous remark.
	\end{proof}

		\begin{claim}\label{claim.twoface}
			Let $c \in \GW$. Every wave is a good wave.
		\end{claim}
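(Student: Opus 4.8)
The plan is a short argument by contradiction. Assume some wave $W$ of $c$, with defining function $\varphi\colon\ZZ^2\to\ZZ$, fails to be good, so that $\sup_{u,v}|\varphi(u)-\varphi(v)|\geq 2$. Since $\varphi$ is $1$-Lipschitz for $\|\cdot\|_1$ (in particular $|\varphi(u)-\varphi(v)|\le 1$ whenever $u,v$ are $\ZZ^2$-adjacent) and $\ZZ^2$ is connected, the image $\varphi(\ZZ^2)$ is an interval of integers; hence it contains three consecutive values $\ell,\ell+1,\ell+2$.

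The one nontrivial ingredient I would isolate first is the following local fact: if $u,u'\in\ZZ^2$ are adjacent and $|\varphi(u)-\varphi(u')|=1$, then $\pi_1(c)$ has a wire tile somewhere on the $z$-plane at level $\min(\varphi(u),\varphi(u'))$. Indeed, by definition of a wave there is a $\cubito$ at $(u,\varphi(u))$ and at $(u',\varphi(u'))$, and these two columns are adjacent with heights differing by exactly $1$; one then runs through the four possible values of $u'-u$ (writing $e_1=(1,0)$, $e_2=(0,1)$) and applies rule ($\Sigma\times C$-Structure III). For a transition ``upward in the $+e_1$ direction'' ($u'=u+e_1$, $\varphi(u')=\varphi(u)+1$) the first bullet of that rule places a wire tile at $(u,\varphi(u))$; for ``downward in $+e_1$'' the second bullet places one at $(u,\varphi(u)-1)$; for the $-e_1$ directions one swaps the roles of $u$ and $u'$ and again uses the second (resp. first) bullet; the $\pm e_2$ directions are handled identically via the third and fourth bullets. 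In every case the forced wire tile sits at the lower of the two levels, as claimed. A wire tile is in particular not the white tile $\square$, so by Claim~\ref{claim.struct_isolation} the entire $z$-plane at level $\min(\varphi(u),\varphi(u'))+1$ consists of white tiles.

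With this in hand the contradiction is immediate. Choosing a column of height $\ell$ and one of height $\ell+1$ and taking a lattice path between them, the first edge along which $\varphi$ attains $\ell+1$ joins adjacent columns of heights $\ell$ and $\ell+1$; by the local fact there is a wire tile at level $\ell$, so the $z$-plane at level $\ell+1$ is all white. Doing the same with heights $\ell+1$ and $\ell+2$ (both in the image of $\varphi$) yields a wire tile at level $\ell+1$, contradicting that this $z$-plane is entirely white. Hence $\varphi$ takes at most two values, and since its image is an interval of integers we conclude $\sup_{u,v}|\varphi(u)-\varphi(v)|\le 1$, i.e. $W$ is a good wave.

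I expect the only step demanding care is the case analysis in the second paragraph: bookkeeping, for each of the four directions and for ``up'' versus ``down'' transitions, which coordinate receives the forced wire tile, and verifying it is always the lower of the two levels. Everything else — the interval property of $\varphi(\ZZ^2)$ and the ``first edge on a path'' selection — is routine.
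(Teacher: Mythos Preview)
Your proof is correct and follows essentially the same approach as the paper: both use the $1$-Lipschitz property to get three consecutive values in the image of $\varphi$, invoke rule ($\Sigma\times C$-Structure III) to produce wire tiles at two consecutive $z$-levels, and derive a contradiction from Claim~\ref{claim.struct_isolation}. Your version is slightly more explicit than the paper's in two respects—the case analysis pinning the forced wire tile to the \emph{lower} level, and the path argument producing adjacent columns of the required heights—but these are exactly the details the paper suppresses, so the overall arguments coincide.
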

		
		\begin{proof}
			Let $W$ be a wave and $\varphi : \ZZ^2 \to \ZZ$ a function defining $W$. As $\varphi$ is $1$-Lipschitz its image is an interval in $\ZZ$. If $\varphi(\ZZ^2) = \{z\}$, $\varphi(\ZZ^2) = \{z,z+1\}$ or $\varphi(\ZZ^2) = \{z-1,z\}$ then the claim is satisfied. Otherwise, there are three contiguous values $\{\eta-1,\eta,\eta+1\}$ in the image of $\varphi$. Therefore there must be two adjacent positions $u,v$ in $\ZZ^2$ which have $\varphi(u) = \eta-1$ and $\varphi(v) = \eta$. Rule ($\Sigma \times C$-Structure III) implies that a wire tile appears in the $\eta-1$ coset. On the other hand, the same argument applies to the pair $\eta,\eta+1$, showing that there is also a wire tile in the $\eta$-coset. Using Claim~\ref{claim.struct_isolation} we obtain a contradiction.
		\end{proof}

	\begin{claim}\label{claim_noclasedos}
		$\R_2(\GW,\sigma) \neq \GW^2$.
	\end{claim}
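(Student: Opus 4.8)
The plan is to isolate a property of configurations that is preserved under $\overline{\R(\GW,\sigma)}$ and hence along $\R_1(\GW,\sigma)$-chains, and to exhibit two configurations on which it differs. The property will be ``$c$ contains at least one $\cubito$''. On one side we take the flat wave configuration $f$ given by $\pi_1(f)_u=\square$ for all $u\in\ZZ^3$ and $\pi_2(f)_{(x,y,z)}=\cubito$ iff $z=0$. One checks $f\in\GW$: all rules involving wire tiles are vacuous, the all-white configuration is a valid $X_{\mbox{struct}}$ configuration in each $z$-plane, and rules $(C$-Structure $\mathrm{I})$, $(C$-Structure $\mathrm{II})$ and $(\Sigma\times C$-Structure $\mathrm{III})$ hold directly. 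On the other side we take $e\in\GW$ consisting entirely of white tiles paired with empty cubes, for which every rule is vacuous and which carries no $\cubito$.

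The heart of the argument is the stability statement: if $(a,b)\in\overline{\R(\GW,\sigma)}$ and $a$ contains a $\cubito$, then so does $b$. To see it, take asymptotic pairs $(a^{(k)},b^{(k)})$ with $a^{(k)}\to a$ and $b^{(k)}\to b$, fix a position $w_0=(x_0,y_0,z_0)$ with $\pi_2(a)_{w_0}=\cubito$, and note that $\pi_2(a^{(k)})_{w_0}=\cubito$ for all large $k$. By Claim~\ref{claim_thereisawave}, $a^{(k)}$ has a (unique) wave through $w_0$, and by Claim~\ref{claim.twoface} it is a \emph{good} wave; since its defining function equals $z_0$ at $(x_0,y_0)$, it takes $z$-values only in $\{z_0-1,z_0,z_0+1\}$. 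Because a wave is a graph over all of $\ZZ^2$ while $a^{(k)}$ and $b^{(k)}$ agree off a finite set, choosing a column $(x_1,y_1)$ outside the (finite) projection of that set shows that $b^{(k)}$ carries a $\cubito$ at a height in $\{z_0-1,z_0,z_0+1\}$; applying Claims~\ref{claim_thereisawave} and~\ref{claim.twoface} to $b^{(k)}$, the wave through that $\cubito$ is good and therefore contained in $\ZZ^2\times\{z_0-2,\dots,z_0+2\}$, so $b^{(k)}$ has a $\cubito$ over the origin, say at $(0,0,m_k)$ with $m_k$ in the fixed finite set $\{z_0-2,\dots,z_0+2\}$. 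Passing to a subsequence on which $m_k$ is constant and using $b^{(k)}\to b$ produces a $\cubito$ in $b$.

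Granting this, the claim follows at once. Since $\R_1(\GW,\sigma)\subseteq\overline{\R(\GW,\sigma)}$ and the latter is symmetric, the property ``contains a $\cubito$'' is constant along any $\R_1(\GW,\sigma)$-chain; as $\R_1(\GW,\sigma)$ is closed we have $\R_2(\GW,\sigma)=(\R_1(\GW,\sigma))^{+}$, so if $(f,e)$ were in $\R_2(\GW,\sigma)$ there would be such a chain from $f$ to $e$, forcing $e$ to contain a $\cubito$ --- a contradiction. Hence $(f,e)\notin\R_2(\GW,\sigma)$ and $\R_2(\GW,\sigma)\neq\GW^2$.

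The main obstacle is exactly what Claim~\ref{claim.twoface} controls: a priori, under a limit of asymptotic pairs the waves of the $b^{(k)}$ could drift off to $z=\pm\infty$ and leave no $\cubito$ in $b$ --- indeed ``containing a $\cubito$'' is not a closed condition on $\GW$, since the flat waves at height $k$ converge to $e$. It is the good-wave property, the uniform bound of $1$ on the $z$-oscillation of any wave, that confines all the relevant waves to a bounded band of heights and makes the limiting argument work. One subtle point to keep track of in the write-up is that the anchor $w_0$ must be taken from $a$ (which genuinely contains a $\cubito$), not from the approximants, since that is what makes the height bound uniform in $k$.
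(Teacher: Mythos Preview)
Your proof is correct and follows essentially the same approach as the paper: both arguments show that the property ``contains a $\cubito$'' is preserved under $\R_1(\GW,\sigma)=\overline{\R(\GW,\sigma)}$ by using Claims~\ref{claim_thereisawave} and~\ref{claim.twoface} to trap the relevant waves in a bounded height band, and then conclude via transitive chains that the all-$(\square,\nubito)$ configuration cannot be $\R_2$-related to any configuration containing a $\cubito$. The paper phrases the argument for an arbitrary $c\neq(\square,\nubito)^{\ZZ^3}$ rather than a specific flat-wave $f$, but the content is the same.
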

	
	\begin{proof}
		Clearly $(\square,\nubito)^{\ZZ^3} \in \GW$. Let $c \in \GW \setminus \{ (\square,\nubito)^{\ZZ^3}\}$. We claim there is a coordinate $w \in \ZZ^3$ such that $\pi_2(c)_w = \cubito$. If not, as $c \neq (\square,\nubito)^{\ZZ^3}$ there is $w' \in \ZZ^3$ such that $\pi_1(c)_{w'} \neq \square$ and thus by rule
		($\Sigma \times C$-Structure I) there is also a $\cubito$ in $\pi_2(c)$ appearing at some coordinate $w$. By Claim~\ref{claim_thereisawave} there is a wave $W$ for $c$ which is furthermore a good wave by Claim~\ref{claim.twoface}. Let $c' \in \GW$ such that $(c,c') \in \R_1(\GW,\sigma)$ and let $(\tilde{c},\tilde{c}')$ be an asymptotic pair such that $c|_{w + [-2,2]^3} = \tilde{c}|_{w + [-2,2]^3}$ and  $c'|_{w + [-2,2]^3} = \tilde{c}'|_{w + [-2,2]^3}$. As $(\tilde{c},\tilde{c}')$ are asymptotic they must coincide outside some finite region $F \Subset \ZZ^3$. Thus there must be a $\cubito$ in $\pi_2(\tilde{c}')_{\tilde{w}}$ for some $\tilde{w} \in W \cap (\ZZ \setminus F)$. Again, there is a good wave $\widetilde{W}$ for $\tilde{c}'$ passing through $\tilde{w}$ and thus $\widetilde{W} \cap (w+[-2,2]^3) \neq \emptyset$. This shows that there is a $\cubito$ appearing in $c'|_{w + [-2,2]^3}$. 
		
		We have so far shown that if $c \in \GW \setminus \{ (\square,\nubito)^{\ZZ^3}\}$  and $(c,c') \in \R_1(\GW,\sigma)$, then $c' \neq (\square,\nubito)^{\ZZ^3}$. If $(c,(\square,\nubito)^{\ZZ^3}) \in \R_2(\GW,\sigma)$, there would be a finite chain $c_0,c_1,\dots,c_n$ such that $c_0 = c$, $c_n = (\square,\nubito)^{\ZZ^3}$ and $(c_i,c_{i+1}) \in \R_1(\GW,\sigma)$. By the previous argument every $c_i$ in the chain must be $(\square,\nubito)^{\ZZ^3}$ and thus $c = (\square,\nubito)^{\ZZ^3}$ yielding a contradiction. This shows that $(c,(\square,\nubito)^{\ZZ^3}) \notin \R_2(\GW,\sigma)$.
	\end{proof}
	

	\begin{definition}
		Let $c \in \GW$ and $W$ be a good wave. Let $\varphi$ be the function defining $W$. The \textbf{crest} of $W$ is the set $W_c = \{(x,y,\varphi(x,y)) \in W \mid  \varphi(x,y) = \max_{(i,j)\in \ZZ^2}\varphi(i,j) \}$.
	\end{definition}
	
	Let $c \in \GW$ and $W$ a good wave which is not flat. From rules ($\Sigma \times C$-Structure II) and ($\Sigma \times C$-Structure III) there is a correspondence between crests and red wires. See Figure~\ref{fig_pattern2}. On the other hand, note that flat waves correspond to planes in $X_{\mbox{struct}}$ which either do not contain wires or only contain wires of a single color.
	
	\begin{figure}[h!]
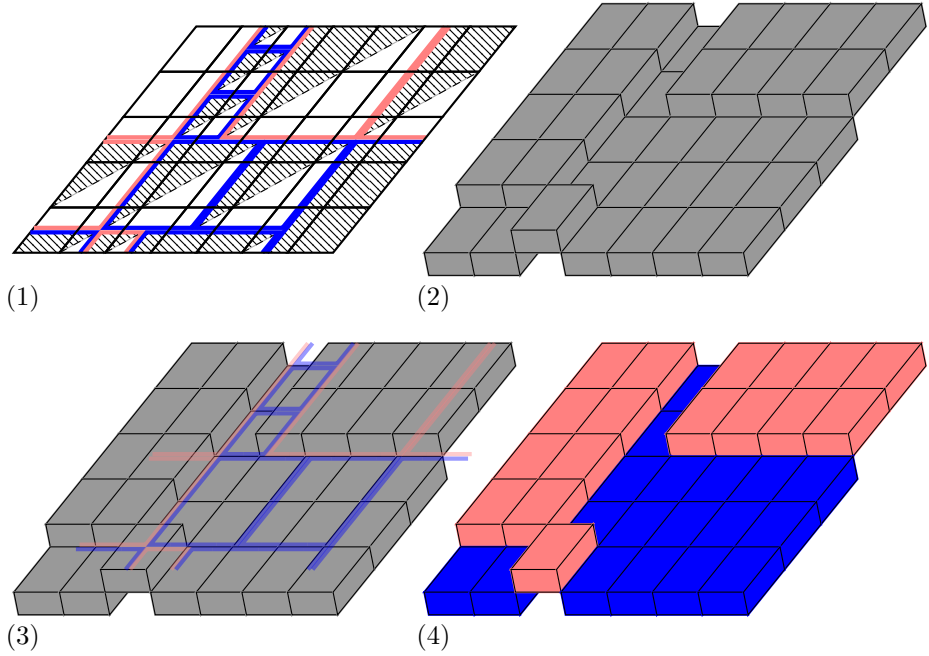

		\centering
		\include{config2}
		
		\caption{Different views of a configuration $c \in \GW$. In $(1)$ we see a configuration in $X_{\mbox{struct}}$. $(2)$ shows a good wave $W$. In $(3)$ we see the superposition of both. In $(4)$ we show the crest of the good wave in red to remark the correspondence.}
		\label{fig_pattern2}
	\end{figure}
	
	\begin{claim}\label{claim_pasting_two_configs}
		Let $c,c' \in \GW$ and $k \in \ZZ$. There is a configuration $\hat{c} \in \GW$ such that:
		
		\begin{enumerate}
			\item For every $z \in \ZZ$ such that $z < k-2$ and $(x,y)\in \ZZ^2$, $\hat{c}_{(x,y,z)} = c_{(x,y,z)}$.
			\item For every $z \in \ZZ$ such that $z > k+2$ and $(x,y)\in \ZZ^2$, $\hat{c}_{(x,y,z)} = c'_{(x,y,z)}$.
		\end{enumerate}
	\end{claim}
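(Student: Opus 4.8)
The plan is to produce $\hat c$ by keeping $c$ well below level $k$, keeping $c'$ well above level $k$, and inserting between them a block of $z$-planes all filled with the symbol $(\square,\nubito)$. The point is that this symbol is \emph{inert}: if $\pi_1(\hat c)_w=\square$ and $\pi_2(\hat c)_w=\nubito$ then the hypothesis of each of the rules ($\Sigma$-Structure II), ($C$-Structure I), ($C$-Structure II), ($\Sigma\times C$-Structure I), ($\Sigma\times C$-Structure II), ($\Sigma\times C$-Structure III) fails at $w$, and ($\Sigma$-Structure I) only asks that each $z$-plane be an admissible pattern of $X_{\mbox{struct}}$, which the all-$\square$ plane is. Since every instance of a $\GW$-rule at a cell of $z$-level $m$ inspects only cells of level in $\{m-1,m,m+1,m+2\}$, it suffices to solve two one-sided problems: complete $c|_{\ZZ^2\times(-\infty,k-3]}$ to some $c^-\in\GW$ equal to $(\square,\nubito)$ on every plane $z\ge k-1$, and complete $c'|_{\ZZ^2\times[k+3,\infty)}$ to some $c^+\in\GW$ equal to $(\square,\nubito)$ on every plane $z\le k+1$. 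Given these, let $\hat c$ be $c^-$ on $\ZZ^2\times(-\infty,k]$ and $c^+$ on $\ZZ^2\times[k+1,\infty)$; the two prescriptions agree (both $(\square,\nubito)$) on the planes $k-1,k,k+1$, so $\hat c$ is well defined, it restricts to $c$ on $z<k-2$ and to $c'$ on $z>k+2$, it coincides with $c^-$ on the half-space $z\le k+1$ and with $c^+$ on the half-space $z\ge k-1$, and since the $z$-window of any rule instance at level $m$ lies inside $\{z\le k+1\}$ when $m\le k-1$ and inside $\{z\ge k-1\}$ when $m\ge k$, every such instance is inherited from $c^-$ or from $c^+$; hence $\hat c\in\GW$.

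For the lower one-sided problem I would use the structural description of $\GW$. By Claim~\ref{claim.twoface} every wave of $c$ is good, hence occupies at most two consecutive $z$-planes, and by the remark preceding Claim~\ref{claim_thereisawave} any two distinct waves have $z$-heights differing by at least $3$ over each column; in particular at most one wave $W$ of $c$ meets both levels $k-3$ and $k-2$, and if it exists then no other wave touches levels $k-4,k-3,k-2,k-1$ and $W$ cannot reach level $k-1$. Define $c^-$ by retaining $c$ on $z\le k-3$, filling all planes $z\ge k-1$ with $(\square,\nubito)$, and putting on the plane $z=k-2$ the white tile everywhere together with exactly those $\cubito$'s at level $k-2$ that are dictated by $c|_{z\le k-3}$ via ($\Sigma\times C$-Structure II) applied to the wires of $c$ at level $k-3$ and via iterating ($C$-Structure II) from the $\cubito$'s of $c$ at level $k-3$. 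In the generic case this retained set is the crest of $W$ (and it is empty if there is no such $W$); the degenerate case where it is a whole plane of $\cubito$'s pushed up by an all-red wire row at level $k-3$ is identical. In all cases the structure on plane $k-2$ may be chosen white, because a wire at level $k-3$ forces, by ($\Sigma$-Structure II) and Claim~\ref{claim.struct_isolation}, the entire planes $k-2$ and $k-1$ of $c$ to be white, and no $\GW$-rule forces a wire at level $k-2$ once plane $k-1$ is empty; moreover goodness of $W$ guarantees that ($C$-Structure I), ($C$-Structure II) and ($\Sigma\times C$-Structure III) at the retained cubes never demand a $\cubito$ on the empty plane $k-1$. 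The remaining verification is then a finite case analysis: instances at cells with $z\le k-3$ hold because the $\cubito$'s they call for at level $k-2$ are exactly the retained ones (here Claim~\ref{claim.struct_isolation} again forbids, next to a non-white tile at level $k-3$, a wave straddling $\{k-2,k-1\}$, and wave separation forbids a $\cubito$ at level $k-3$ adjacent to a discarded one at level $k-2$); instances at cells of plane $k-2$ are vacuous, directly satisfied by the trivial planes above, or reduce to instances valid in $c$; and instances at cells with $z\ge k-1$ are vacuous. Hence $c^-\in\GW$.

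The construction of $c^+$ is parallel, but needs slightly more care because the $\GW$-rules are not symmetric under $z\mapsto-z$. The relevant feature is now a wave $W$ of $c'$ straddling $\{k+2,k+3\}$: its crest sits at level $k+3$, inside the retained region, while its wires sit at level $k+2$, and the crest $\cubito$'s at level $k+3$ force those wires to be present through ($\Sigma\times C$-Structure III). So plane $k+2$ of $c^+$ must consist of the trough of $W$, the wires of $W$ at level $k+2$, and the white/empty symbol elsewhere; this is a legal $X_{\mbox{struct}}$ pattern because the wires carried by a good wave trace the contour of its crest, so each of their wired sides meets another such side and each plain side meets a white tile. Claim~\ref{claim.struct_isolation} forces $c'$ to be white on planes $k+3$ and $k+4$ wherever a wire sits at level $k+2$, so the copied part of $c'$ is consistent with this plane, and goodness of $W$ again keeps every obligation from level $k+2$ out of the trivial planes $k,k+1$. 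The rule-by-rule check for $c^+$ is then organised exactly as for $c^-$, and the gluing of the first paragraph finishes the proof.

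I expect the genuine obstacle to be precisely this interface bookkeeping on the two transition planes $z=k-2$ and $z=k+2$: one must check, case by case, that the unavoidable residue of $c$ (resp. $c'$) on such a plane propagates no further obligation into the $(\square,\nubito)$-buffer. The three structural inputs — Claim~\ref{claim.struct_isolation} (a non-white tile whitens the two planes above), Claim~\ref{claim.twoface} (every wave is good, hence two planes tall) and the wave-separation remark (distinct waves are $z$-distance at least $3$ apart) — are exactly what bound this residue to a single wave crest or trough plus at most one plane of cubes and rule out all the dangerous coexistences; organising the argument around them is what keeps the verification finite rather than combinatorially explosive.
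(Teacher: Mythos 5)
Your proposal is correct and follows essentially the same strategy as the paper's proof: truncate $c$ and $c'$ to one-sided configurations that become $(\square,\nubito)$ past the interface, using the interface planes to complete any good wave straddling them (with Claim~\ref{claim.struct_isolation}, goodness of waves, and wave separation bounding the residue), and then glue along the trivial buffer. Your write-up is in fact somewhat more explicit than the paper's on the upper interface, where the paper only says the construction of $\tilde{c}'$ is ``similar''; the asymmetry you point out there is real and your handling of it is sound.
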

	
	\begin{proof}
		Let us begin by giving a sketch of how we construct $\hat{c}$. First, we  use the coordinates $k-2$ and $k+2$ to complete any good waves which might appear partially in the coordinates $k-3$ of $c$ or $k+3$ of $c'$. Second, we fill the remaining three undefined coordinates $k-1,k,k+1$ with the pair ($\square$, $\nubito$). 
		
		We shall now proceed formally. We begin by constructing a configuration $\tilde{c} \in \GW$ such that for $z < k-2$ $\tilde{c}_{(x,y,z)} = c_{(x,y,z)}$ and for $z > k-2$ $\tilde{c}_{(x,y,z)} = (\square, \nubito)$. Consider the $z$-plane at $k-3$ and the configuration $c$. If there are no wire tiles nor $\cubito$s then $\tilde{c}_{(x,y,z)}$ can be defined as $c_{(x,y,z)}$ if $z < k-2$ and ($\square$, $\nubito$) otherwise. One can check directly that $\tilde{c}$ belongs to $\GW$ and satisfies the requirement.  Otherwise, there is either (a) a coordinate $(x,y,k-3)$ where a non-white tile appears or (b) no wire tile appears in the $z$-plane at $k-3$ but a $\cubito$ appears in some coordinate of the form $(x,y,k-3)$.
		
	 In case (a), then by Claim~\ref{claim.struct_isolation} the $z$-planes at $k-2$ and $k-1$ consists uniquely of the white tile $\square$. Furthermore, the non-white tile forces a $\cubito$ to appear either at $(x,y,k-3)$ or $(x,y,k-2)$. Following the same argument as in Claim~\ref{claim.twoface} we obtain that there is a plane of $\cubito$ completely contained in the coordinates $k-3$ and $k-2$, therefore by rule ($C$-Structure I) the $z$-plane at $k-1$ contains only $\nubito$. Therefore the configuration $\tilde{c}_{(x,y,z)}$ defined as $c_{(x,y,z)}$ if $z \leq k-2$ and ($\square$, $\nubito$) otherwise belongs to $\GW$ and coincides with $c$ up to the $z$-plane at $k-1$.
	
	 In case (b) we have a $\cubito$ at a position $(x,y,k-3)$. As no wire tiles appear in the $z$-plane at $k-3$, we know there cannot be any $\cubito$ in the $z$-plane at $k-2$, therefore the plane of $\cubito$ is either completely contained in $z=k-3$, or part of it is in the $z$-plane at $k-4$. In either case, the configuration $\tilde{c}_{(x,y,z)}$ defined as $c_{(x,y,z)}$ if $z \leq k-3$ and ($\square$, $\nubito$) otherwise belongs to $\GW$ and coincides with $c$ up to the $z$-plane at $k-3$.
	
	 We have so far a configuration $\tilde{c}$ such that for $z < k-2$ $\tilde{c}_{(x,y,z)} = c_{(x,y,z)}$ and for $z > k-2$ $\tilde{c}_{(x,y,z)} = (\square, \nubito)$. Similarly, one can construct a configuration $\tilde{c}'$ such that for $z > k+2$, $\tilde{c}'_{(x,y,z)} = c'_{(x,y,z)}$ and for $z < k+2$, $\tilde{c}'_{(x,y,z)} = (\square, \nubito)$. We define $\hat{c}$ by:
	
	 $$\hat{c} = \begin{cases}
	 \tilde{c}_{(x,y,z)} \mbox{ if } z \leq k \\
	 \tilde{c}'_{(x,y,z)} \mbox{ if } z > k  \\
	 \end{cases}$$
	
	 One can verify that $\hat{c}\in \GW$ and that it satisfies the requirements of the claim.\end{proof}

	\begin{claim}\label{proposition_class_three}
		$\R_3(\GW,\sigma) = \GW^2$
	\end{claim}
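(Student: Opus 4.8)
The plan is to follow the template of the proof of Claim~\ref{claim_sofico_2}. I will show $\overline{\R_2(\GW,\sigma)} = \GW^2$; since by Claim~\ref{claim_noclasedos} this set strictly contains $\R_2(\GW,\sigma)$, the relation $\R_2(\GW,\sigma)$ is not closed, so by definition $\R_3(\GW,\sigma) = \overline{\R_2(\GW,\sigma)} = \GW^2$. To obtain $\overline{\R_2(\GW,\sigma)} = \GW^2$ it suffices to produce a dense set $Y \subseteq \GW^2$ with $Y \subseteq \R_2(\GW,\sigma)$. Call $c \in \GW$ \emph{finite} if its $\cubito$'s decompose into finitely many waves; by Claims~\ref{claim_thereisawave}, \ref{claim.twoface} and \ref{claim.struct_isolation} together with rule~($\Sigma \times C$-Structure I), being finite is equivalent to all non-white tiles and $\cubito$'s of $c$ lying in a bounded range of $z$-planes, in which case the number of waves $\nu(c)$ is well defined and is invariant under $\sigma$ and under $\R$. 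For $p \ge 0$ let $c^\flat_p \in \GW$ be the configuration with flat white waves at the $z$-levels $3,6,\dots,3p$ and equal to $(\square,\nubito)$ on every other coordinate; a direct check with the rules shows $c^\flat_p \in \GW$, that $\nu(c^\flat_p)=p$, and that $c^\flat_0 = (\square,\nubito)^{\ZZ^3}$. I take $Y := \{(c,d)\in\GW^2 : c,d \text{ finite and } \nu(c)=\nu(d)\}$.

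First I would prove $Y \subseteq \R_2(\GW,\sigma)$. By transitivity of $\R_2$ it is enough to show $(c,c^\flat_{\nu(c)}) \in \R_2(\GW,\sigma)$ for every finite $c$. I would do this by joining $c$ to $c^\flat_{\nu(c)}$ by a finite chain of configurations, each consecutive pair of which lies in $\R_1(\GW,\sigma) = \overline{\R(\GW,\sigma)}$, through two kinds of \emph{elementary move}: (i) replacing one good wave by a flat wave at its lower level and whitening the $z$-plane that carried its crest (legitimate since the crest wires force white planes above them by Claim~\ref{claim.struct_isolation}); and (ii) shifting one flat white wave by one $z$-level when the neighbouring planes are otherwise empty. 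Each elementary move is realized as a limit of asymptotic pairs by the ``bump'' argument of Step 1 in Claim~\ref{claim_sofico_2}: carry out the modification only inside a horizontal box $[-m,m]^2$, interpolate by a single good wave near the box boundary (admissible because every wave is good, Claim~\ref{claim.twoface}), and complete the induced wire colourings as in Claim~\ref{claim.blueskyes}, using wire tiles with a white side to bring the interior of the box back to white; the resulting $e^{(m)}\in\GW$ differs from the starting configuration on a finite set — hence is asymptotic to it — and converges as $m\to\infty$ to the modified configuration. Using finitely many moves of type (i) one flattens every wave of $c$ while preserving their vertical order, and then finitely many moves of type (ii) — first lifting the flattened stack above all other content, then lowering it wave by wave — bring it to $c^\flat_{\nu(c)}$, freely using the $\sigma$-invariance of $\R_2$.

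Second I would prove $Y$ is dense in $\GW^2$. Given $(c,d)\in\GW^2$ and $n\ge 1$, choose $L\gg n$ and apply Claim~\ref{claim_pasting_two_configs} twice — once at level $L$ with the pair $(c,(\square,\nubito)^{\ZZ^3})$, once at level $-L$ — to get a finite $c'\in\GW$ that agrees with $c$ on $[-n,n]^3$; build $d'$ from $d$ the same way. Putting $p=\nu(c')$ and $q=\nu(d')$ and assuming $p\ge q$, one further application of Claim~\ref{claim_pasting_two_configs} pastes onto $d'$, above a sufficiently high level, a vertical translate of $c^\flat_{p-q}$, producing a finite $d''\in\GW$ with $\nu(d'')=p$ that still agrees with $d$ on $[-n,n]^3$. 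Then $(c',d'')\in Y$ agrees with $(c,d)$ on $[-n,n]^3$, so $(c,d)\in\overline{Y}$. Combining the two steps, $\GW^2 = \overline{Y} \subseteq \overline{\R_2(\GW,\sigma)} = \R_3(\GW,\sigma)$.

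The main obstacle will be Step 1, concretely the verification that each elementary move genuinely lands in $\overline{\R(\GW,\sigma)}$: this needs careful bookkeeping of the structure subshift $X_{\mbox{struct}}$ inside the box $[-m,m]^2$ — the correspondence between crests and red wires, the forced colourings of the zones, and the transitions back to white — both to see that $e^{(m)}$ is admissible and that it has the claimed limit (with extra care when a crest is unbounded). The remaining ingredients (the structural description of finite configurations, well-definedness of $\nu$, and the pasting manipulations of Step 2) are routine given the claims already established.
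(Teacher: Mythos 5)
Your proposal follows essentially the same route as the paper's proof: your elementary moves (i) and (ii) are its Steps 2 and 1 respectively (flattening one good wave, and shifting a flat wave by one $z$-level, each realized as the limit of asymptotic ``bump'' configurations built from Claims~\ref{claim.blueskyes} and~\ref{claim.twoface}), your chaining via transitivity of $\R_2$ is its Step 3, and your density argument via Claim~\ref{claim_pasting_two_configs} plus padding the shorter configuration with extra flat waves is its Step 4. The only point deserving care --- which the paper discusses explicitly --- is that the moves must be scheduled so that no intermediate configuration has two waves at vertical distance $2$ (forbidden by rule ($C$-Structure I), since flattening can drop a wave by one level); the paper handles this by processing waves from bottom to top and relocating each flattened wave to a reserved slot $-N+3(j-1)$ before touching the next, and your ``flatten all, then lift and lower'' order is sound for the same reason provided each flattening sends a wave to its lower level, as you specify.
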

	
	\begin{proof}
		The proof proceeds in four steps: in the first three steps we show that if two configurations in $\GW$ have the same finite amount of good waves, then they belong to $\R_2(\GW,\sigma)$. Then, we show that the set of all such pairs is dense in $\GW^2$.
		
		\bigskip
	
		\textbf{Step 1 } Two configurations having exactly one flat wave and no non-white tiles.
		
		\bigskip
		
		More precisely, let $k \in \ZZ$ and $c^{k} \in \GW$ be the configuration:
		$$c^k_{(x,y,z)} = \begin{cases}
		(\square, \cubito) \mbox{    if    } z = k \\
		(\square, \nubito) \mbox{    otherwise.} \\
		\end{cases}$$
		
		We will show that for any pair $k,k'$, then $(c^k,c^{k'})\in \R_2(\GW,\sigma)$. As this relation is transitive and shift-invariant, it suffices to show that $(c^0,c^{1})\in \R_1(\GW,\sigma)$.
		
		Let $n \in \NN$, $F_n = [-n,n]^3$ and consider the pattern $p^k_n = c^k|_{F_n}$. We can construct asymptotic configurations $\tilde{c}^0$ and $\tilde{c}^1$ such that $\tilde{c}^0|_{F_n} = p^0_n$ and $\tilde{c}^1|_{F_n} = p^1_n$. The construction which follows is illustrated in Figure~\ref{fig_paso1}.
		
		Indeed, we define $\tilde{c}^0$ as follows: On the first layer the configuration contains only the $\square$ tile in every $z$-coset but on $z = 0$. On $z=0$ it contains a blue square with no wire tiles on the support $[-3n+1,n+1]^2$ and the blue cross wire tile everywhere else. In the second layer it is equal to $c^0$.
		
		Define $\tilde{c}^1$ as follows: On the first layer the configuration contains only the $\square$ tile in every $z$-coset but on $z = 0$. On $z=0$ it contains a red square with no wire tiles on the support $S=[-3n+1,n+1]^2$ and the blue cross wire tile everywhere else. In the second layer it contains $\cubito$ only on the $z= 0$ and $z=1$ cosets following the first layer on $z = 0$. That is, there is a $\cubito$ on $(x,y,0)$ if and only if $(x,y) \in \ZZ^2 \setminus (S+(1,1))$ and on $(x,y,1)$ if and only if $(x,y) \in (S+(1,1))$.
		
		\begin{figure}[h!]
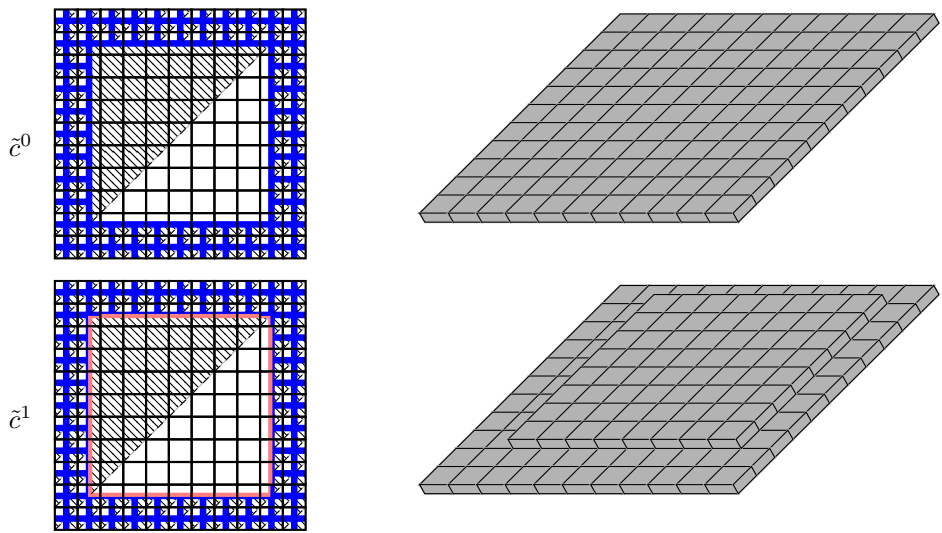

			\centering
			\include{paso_pasito_redondito_1}
			
			\caption{The asymptotic configurations $\tilde{c}^0$ (above) and $\tilde{c}^1$ (below). The only difference on the first coordinate is the color of the boundary of the inner square. Consequently, the wave at $z = 0$ is flat in $\tilde{c}^0$ but has a crest on $\tilde{c}^1$.}
			\label{fig_paso1}
		\end{figure}
		
		Clearly, we have that $\tilde{c}^0|_{\ZZ^3\setminus F_{4n}} = \tilde{c}^1|_{\ZZ^3\setminus  F_{4n}}$ and thus they are asymptotic. Also $\tilde{c}^0|_{F_{n} = p_n^0}$ and $\tilde{c}^1|_{F_{n} = p_n^1}$. As $n$ is arbitrary, we conclude $(c^0,c^1) \in \R_1(\GW,\sigma)$. Therefore $(c^0,c^{k})\in \R_2(\GW,\sigma)$.
		
		\bigskip
		
		\textbf{Step 2} Two configurations having exactly one good wave.
		
		\bigskip
		
		 Now consider a configuration $c$ which has only one good wave. If there are no non-white tiles in $c$ the wave must be flat and thus $c =c^k$ for some $k \in \ZZ$ as defined in step 1. Otherwise, there is tile which is not white at some position $(x,y,k)$. By rule ($\Sigma \times C$-Structure I) we know that there must be a $\cubito$ appearing either in $(x,y,k)$ or $(x,y,k+1)$. By Claim~\ref{claim.struct_isolation} we know that the non-white tiles are then constrained to have the same third coordinate $k$.
		
		Let $n$ be large enough such that $|k|< n$. Let $F_n = [-n,n]^3$, $H$ the $z$-plane at $k$ and consider $p = \pi_1(c)|_{F_n \cap H}$ . There are two cases to consider: (a) there is a wire tile in $p$, (b) there are no wire tiles in $p$. By Claim~\ref{claim.blueskyes} we can construct a configuration $\hat{x}\in X_{\mbox{struct}}$ such that $\hat{x}|_{[-n,n]^2} = p$ and is asymptotic to $\begin{tikzpicture}[scale = 0.15]\def \e {0.1};
		\filldraw [black!30, pattern=north west lines] (1.1,1.1) -- (2,2) -- (1.1,2);
		\filldraw [black!30, pattern=north west lines] (0.9,0.9) -- (0,0) -- (0,0.9);
		\filldraw [black!30, pattern=north west lines] (1.1,0.9) rectangle (2,0);
		\draw[ ultra thick, blue] (1-\e,0) -- (1-\e,1-\e) -- (0,1-\e);
		\draw[ ultra thick, blue] (1-\e,2) -- (1-\e,1+\e) -- (0,1+\e);
		\draw[ ultra thick, blue] (1+\e,0) -- (1+\e,1-\e) -- (2,1-\e);
		\draw[ ultra thick, blue] (1+\e,2) -- (1+\e,1+\e) -- (2,1+\e);
		\draw[dashed] (0,0) -- (0.8,0.8);
		\draw[dashed] (1.2,1.2) -- (2,2);
		\draw[thick] (0,0) rectangle (2,2);
		\end{tikzpicture}^{\infty}$. In case (a) the colors of every wire in $\hat{x}$ are determined already, whereas in (b) we can choose the color of the wires surrounding the support $[-n,n]^2$. If a $\cubito$ appears at $(0,0,k)$ we set it blue, whereas if it appears at $(0,0,k+1)$ we set it red. We can define $\hat{c} \in \GW$ by setting in the first layer the $z = k$ coset to be $\hat{x}$ and $\square$ everywhere else. $\pi_2(\hat{c})$ contains only $\cubito$s in the $z$-planes at $k$ and $k+1$ consistently with the colors of the wires in $\hat{x}$. One can check that with these choices $\hat{c}|_{F_n}=c|_{F_n}$. On the other hand, the configuration $\tilde{c}^k$ constructed in step 1 is asymptotic to $\hat{c}$. As $n$ is arbitrary  we obtain that $(c,c^k) \in \R_1(\GW,\sigma)$. Joining this with step 1 we obtain that every pair of configurations $(c,c')$ which contain a unique good wave belong to $\R_2(\GW,\sigma)$.
		
		\bigskip
		\textbf{Step 3 } Two configurations with exactly $m$ good waves.
		\bigskip
		
		Let $c$ be a configuration with exactly $m$ good waves. The idea is to think of $c$ as a superposition of $m$ disjoint good waves and modify one wave at a time to turn $c$ into a configuration with $m$ flat waves appearing at fixed positions. To achieve this we use step 2 to get rid of non-white tiles and turn the waves flat and step 1 to move flat waves to a fixed set of positions. This must be done carefully to avoid creating forbidden patterns. For instance, if we consider the configuration containing a flat wave at $z = 0$ with no non-white tiles and a flat wave at $z = 3$ with only red cross tiles on $z = 2$, then step 2 does nothing to the flat wave at $z = 0$, while it may turn the flat wave at $z=3$ into a new flat wave on $z = 2$ thus creating two $\cubito$ at distance $2$ apart and violating rule ($C$-Structure I). This possible conflict is solved by operating from the lowest coordinate towards the largest one and leaving enough space between good waves at every step.
		
		Let us proceed formally. Let $k_1 < \dots < k_m$ be the set of $k_i \in \ZZ$ such that $\pi_2(c)_{(0,0,k_i)} = \cubito$, let $N > 3m+\max(|k_1|,|k_m|)$ and let $\hat{c}$ be the configuration which has exactly $m$ flat waves at positions $-N, -N+3, \dots, -N+3(m-1)$ and no non-white tiles. We claim that it is enough to show that $(c, \hat{c}) \in \R_2(\GW,\sigma)$. Indeed, if $c_1$ and $c_2$ are two configurations which exactly $m$ waves, we can choose $N$ large enough to satisfy the above requirement for both of them and thus $(c_1,c_2) \in \R_2(\GW,\sigma)$.

		In order to show that $(c, \hat{c}) \in \R_2(\GW,\sigma)$ we construct a finite sequence of configurations $\{c_i\}$ which begins with $c$, ends with $\hat{c}$ and so that $c_{i+1}$ can be obtained from $c_i$ by applying one of the previous two steps, that is, such that $(c_i,c_{i+1}) \in \R_2(\GW,\sigma)$. More precisely, each $c_{i+1}$ is obtained from $c_i$ through a step in the following procedure:  
		
		For $j = 1,\dots,m$:
		\begin{enumerate}
			\item Use step 2 to turn the good wave at $k_j$ into a flat wave on the least possible $z$-coordinate and erase its associated non-white tiles.
			\item Iterate step 1 to move the flat wave obtained in the previous step to the coordinate $-N+3(j-1)$. 
		\end{enumerate}
		
		Clearly the configuration obtained at the end of this procedure is $\hat{c}$. Let us argue that at every step the configuration obtained from the previous one contains no forbidden patterns. 
		
		If $j = 1$, then step 2 is used on the good wave at $k_1$. On the one hand the non-white tiles are eliminated, and the $\cubito$s can only decrease their $z$-coordinate, thus no forbidden patterns can be created. The iterative application of step 1 only increases the distance between the waves and thus does not create any forbidden patterns. 
		
		If $j > 1$, we are in a configuration where the first $(j-1)$ waves are flat, have no associated non-white tiles, and are located at $-N,-N+3,\dots, -N+3(j-2)$. The application of step 2 on the $j$-th good wave again does not decrease the distance towards the $(j+1)$-th wave and by definition the distance with the $(j-1)$-th wave is at least $3$. This also holds for every application of step 1. Therefore no forbidden patterns appear.
		
		
		\bigskip
		\textbf{Step 4 } An arbitrary pair of configurations $(c_1,c_2) \in \GW^2$.
		\bigskip
		
		 We have shown so far that every pair of configurations having the same amount of good waves belongs to $\R_2(\GW,\sigma)$. It suffices to show that the pairs of configurations with the same amount of good waves are dense in $\GW^2$.
		
		Indeed, let $n \in \NN$ and use Claim~\ref{claim_pasting_two_configs} with $c_1$ and $(\square,\nubito)^{\ZZ^3}$ and $k = n$ to construct a configuration in $\GW$ which is equal to $c_1$ for all $z < n-2$ and has $(\square,\nubito)$ at any coordinate with $z > n+2$. Then use again Claim~\ref{claim_pasting_two_configs} with $(\square,\nubito)^{\ZZ^3}$ and this configuration to obtain a configuration $\hat{c}^n_1 \in \GW$ which is equal to $c_1$ for $|z| < n-2$ and has $(\square,\nubito)$ at any coordinate with $|z| > n+2$. Do the same procedure to construct $\hat{c}^n_2 \in \GW$ with the same properties but changing $c_1$ by $c_2$.
		
		Now, both $\hat{c}^n_1$ and $\hat{c}^n_2$ have a finite amount of good waves. Up to renaming, assume that $\hat{c}^n_1$ has at least as many good waves as $\hat{c}^n_2$ and define $\tilde{c}^n_1 = \hat{c}^n_1$. If $\hat{c}^n_1$ and $\hat{c}^n_2$ have the same amount of good waves, define $\tilde{c}^n_2 = \hat{c}^n_2$. Otherwise, let $r$ be the difference in the amount of good waves and define $\tilde{c}^n_2$ at $(x,y,z)$ as $\hat{c}^n_2$ if $z \leq n+2$. If $z > n+2$ we put the white tile everywhere and a $\cubito$ if and only if $z = n+2+3j$ for $j = 1,\dots,r$. That is, we add artificially $r$ flat waves to the right of $ \hat{c}^n_2$. Clearly $\tilde{c}^n_2 \in \GW$.
		
		We have that $\tilde{c}^n_1$ and $\tilde{c}^n_2$ have the same amount of good waves, hence by step 3 $(\tilde{c}^n_1,\tilde{c}^n_2) \in \R_2(\GW,\sigma)$. Moreover, by definition we have that $\tilde{c}^n_1|_{F_{n-3}} = c_1|_{F_{n-3}}$ and $\tilde{c}^n_2|_{F_{n-3}} = c_2|_{F_{n-3}}$. Therefore the sequence of pairs $\{(\tilde{c}^n_1,\tilde{c}^n_2)\}_{n \in \NN}$ converges to $(c_1,c_2)$. This shows that $(c_1,c_2) \in \R_3(\GW,\sigma)$.
	\end{proof}

	\begin{proof}[Proof of Theorem~\ref{teorema_asintoticodelflow}]
		Consider the Good Wave Shift $\GW$. Claim~\ref{claim_noclasedos} shows $\R_2(\GW,\sigma) \neq \GW^2$ and Claim~\ref{proposition_class_three} that $\R_3(\GW,\sigma) = \GW^2$. Thus $\GW$ is in the asymptotic class $3$. 
	\end{proof}
	
	In order to answer Pavlov's question, we need to show that $\GW$ has topological completely positive entropy and is not bounded chain exchangeable. This is the purpose of the following two claims.
	
		\begin{claim}\label{claim_measure}
			There is a shift-invariant fully supported measure in $\GW$.
		\end{claim}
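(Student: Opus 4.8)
The plan is to produce the measure by averaging over periodic orbits, so the crux is to prove that the \emph{strongly periodic} configurations of $\GW$ --- those invariant under a finite-index subgroup of $\ZZ^3$ --- are dense in $\GW$. Granting this, there are only countably many of them (each is determined by its restriction to a fundamental domain, which is a finite pattern, and there are countably many possible periods), hence countably many periodic orbits $\{O_k\}_{k\in\NN}$; letting $\mu_k := |O_k|^{-1}\sum_{y\in O_k}\delta_y$ be the equidistributed $\sigma$-invariant probability measure on the finite set $O_k$ and setting $\mu := \sum_{k\in\NN}2^{-k}\mu_k$, we obtain a $\sigma$-invariant probability measure with $\mu(\{y\})>0$ for every $y\in\bigcup_kO_k$; since $\operatorname{supp}(\mu)$ is closed and $\bigcup_kO_k$ is dense, $\operatorname{supp}(\mu)=\GW$.

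For the density, fix $n\geq 1$ and $p\in L_{[-n,n]^3}(\GW)$; I will build a strongly periodic $c\in\GW$ with $c|_{[-n,n]^3}=p$. Pick any $c^{*}\in\GW$ extending $p$. Two applications of Claim~\ref{claim_pasting_two_configs} --- gluing $(\square,\nubito)^{\ZZ^3}$ below the $z$-plane at $-(n+3)$ and above the $z$-plane at $n+3$ --- produce $\hat c\in\GW$ with $\hat c|_{[-n,n]^3}=p$ and $\hat c_{(x,y,z)}=(\square,\nubito)$ whenever $|z|\geq H_0:=n+6$. Thus every non-trivial feature of $\hat c$ --- its finitely many good waves (by Claims~\ref{claim_thereisawave} and \ref{claim.twoface}), whose heights lie in $(-H_0,H_0)$ and are pairwise at distance $\geq 3$, together with its finitely many non-white $z$-planes, which by Claim~\ref{claim.struct_isolation} are also pairwise at distance $\geq 3$ --- lives inside the slab $|z|<H_0$.

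Inside that slab, and away from $[-n,n]^3$, I then perform a \emph{horizontal confinement}, reusing the constructions from the proof of Claim~\ref{proposition_class_three}: flattening the good waves one at a time from the lowest upward, as in Step~3, and, via Claim~\ref{claim.blueskyes}, replacing the $X_{\mbox{struct}}$-layer of each $z$-plane outside a horizontal box by the uniform filler forced by the flattened waves (the all-white tile at heights carrying no wave, and a plane of blue cross-wire tiles at each flattened-wave height). This yields $\hat c'\in\GW$ with $\hat c'|_{[-n,n]^3}=p$ such that, outside a box $[-M,M]^2\times[-H_0,H_0]$ for some $M\geq n$, $\hat c'$ is invariant under the $x$- and $y$-translations, while still $\hat c'_{(x,y,z)}=(\square,\nubito)$ for $|z|\geq H_0$. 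Finally, wrap $\hat c'$ around into a triply periodic configuration: in the $z$-direction with a sufficiently large period, the seams falling in the region $|z|\geq H_0$ where $\hat c'$ is the trivial configuration (so no forbidden pattern appears, and the $\cubito$-heights stay pairwise at distance $\geq 3$); and in the $x$- and $y$-directions with a period $L\geq 2M+1$, the seams falling outside $[-M,M]^2$ where $\hat c'$ is translation-periodic. Since $[-n,n]^3$ lies strictly inside a single fundamental domain, the resulting strongly periodic $c\in\GW$ satisfies $c|_{[-n,n]^3}=p$.

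The main obstacle is the horizontal-confinement step: the wave-flattening and the $X_{\mbox{struct}}$-reduction must be done simultaneously in all the finitely many $z$-planes of the slab while staying globally admissible --- compatibly with the cube-adjacency rules ($C$-Structure I--II), with the isolation of non-white planes (Claim~\ref{claim.struct_isolation}), and with the wire--cube correspondence ($\Sigma\times C$-Structure I--III). This is precisely the bookkeeping already carried out, in the language of asymptotic pairs, in the proof of Claim~\ref{proposition_class_three}; the additional observation needed is that the configurations that argument manufactures become, away from the prescribed window, genuinely periodic under $x$- and $y$-translations, and this is what makes the final wrap-around legitimate.
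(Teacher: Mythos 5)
Your proposal follows essentially the same route as the paper: establish density of strongly periodic configurations by extending an arbitrary admissible pattern so that every $z$-layer becomes uniform outside a bounded region (using Claim~\ref{claim.blueskyes} together with wave completion, plus the truncation of Claim~\ref{claim_pasting_two_configs} in the $z$-direction), then periodize, and finally average the uniform measures on a dense sequence of periodic orbits. One caveat on wording: you cannot literally ``flatten the good waves as in Step~3'' and still have $\hat c'|_{[-n,n]^3}=p$, since a wave with a crest inside the window cannot be made flat without altering $p$; what is actually needed --- and what Claim~\ref{claim.blueskyes} delivers --- is to keep each wave as it is on the window and complete it so that it becomes flat (all blue cross wires) outside a larger horizontal box, which is precisely the uniform-boundary completion the paper performs directly on the pattern before tiling it periodically.
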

		
		\begin{proof}
			A configuration $c$ is strongly periodic if its orbit is finite. We claim that the strongly periodic configurations are dense in $\GW$. Indeed, let $p \in L(\GW)$ be an admissible pattern with support $F_n = [-n,n]^3$. There is always a way to extend $p$ to a pattern $q \in L(\GW)$ with support $F_{4n}$ such that the boundary of the intersection with every $z$-plane consists uniquely of either:
			$$(\begin{tikzpicture}[scale = 0.15]\def \e {0.1};
			\filldraw [black!30, pattern=north west lines] (1.1,1.1) -- (2,2) -- (1.1,2);
			\filldraw [black!30, pattern=north west lines] (0.9,0.9) -- (0,0) -- (0,0.9);
			\filldraw [black!30, pattern=north west lines] (1.1,0.9) rectangle (2,0);
			\draw[ ultra thick, blue] (1-\e,0) -- (1-\e,1-\e) -- (0,1-\e);
			\draw[ ultra thick, blue] (1-\e,2) -- (1-\e,1+\e) -- (0,1+\e);
			\draw[ ultra thick, blue] (1+\e,0) -- (1+\e,1-\e) -- (2,1-\e);
			\draw[ ultra thick, blue] (1+\e,2) -- (1+\e,1+\e) -- (2,1+\e);
			\draw[dashed] (0,0) -- (0.8,0.8);
			\draw[dashed] (1.2,1.2) -- (2,2);
			\draw[thick] (0,0) rectangle (2,2);
			\end{tikzpicture},\cubito),
			(\square,\cubito) \mbox{ or } (\square, \nubito)$$
			and such that the $z$-planes at $-4n, -4n+1, 4n-1$ and $4n$ consist uniquely on $(\square, \nubito)$. Indeed, let us give a sketch of the proof. If the intersection with a $z$-plane contains a non-white tile we may use Claim~\ref{claim.blueskyes} to extend it so that the boundary consists uniquely of the pair $(\begin{tikzpicture}[scale = 0.15]\def \e {0.1};
			\filldraw [black!30, pattern=north west lines] (1.1,1.1) -- (2,2) -- (1.1,2);
			\filldraw [black!30, pattern=north west lines] (0.9,0.9) -- (0,0) -- (0,0.9);
			\filldraw [black!30, pattern=north west lines] (1.1,0.9) rectangle (2,0);
			\draw[ ultra thick, blue] (1-\e,0) -- (1-\e,1-\e) -- (0,1-\e);
			\draw[ ultra thick, blue] (1-\e,2) -- (1-\e,1+\e) -- (0,1+\e);
			\draw[ ultra thick, blue] (1+\e,0) -- (1+\e,1-\e) -- (2,1-\e);
			\draw[ ultra thick, blue] (1+\e,2) -- (1+\e,1+\e) -- (2,1+\e);
			\draw[dashed] (0,0) -- (0.8,0.8);
			\draw[dashed] (1.2,1.2) -- (2,2);
			\draw[thick] (0,0) rectangle (2,2);
			\end{tikzpicture},\cubito)$. On the remaining coordinates there can only be flat waves or no waves at all and thus it is easy to extend them so that the boundary is composed entirely either of $(\square,\cubito) \mbox{ or } (\square, \nubito)$. We use the remaining $z$-planes to complete any wave appearing partially in $p$ and fill the rest with the pair $(\square, \nubito)$. This pattern can be extended to a strongly periodic configuration $c \in \GW$ by setting $c_{(i,j,k)} := q( ((i,j,k)\mod 8n+1)-(4n,4n,4n))$. This shows that strongly periodic configurations are dense.

			Constructing a fully supported measure from a dense set of strongly periodic configurations is standard, we give a proof for completeness. Consider a dense sequence of strongly periodic configurations $\{x_n\}_{n \in \NN}$. For each configuration $x_n$ consider the uniform measure $\mu_{x_n}$ supported on the finite orbit of $x_n$. We claim that $\mu = \sum_{n > 0}2^{-n}\mu_{x_n}$ is a shift-invariant fully supported measure. Let $x\in \GW$ and pick a neighborhood $U \in \mathcal{N}_x$. As $\{x_n\}_{n \in \NN}$ is dense, there exists $N$ such that $x_{N} \in U$. Therefore $$\mu(U)\geq \frac{2^{-N}}{|\operatorname{Orb}(x_{N})|}>0.$$  And thus $x \in \supp(\mu)$. On the other hand, $\mu$ is obviously shift-invariant.\end{proof}

			\begin{claim}\label{claim_weaklymixing}
				$\GW$ is topologically weakly mixing.
			\end{claim}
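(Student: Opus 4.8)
The plan is to prove topological weak mixing of $\GW$ by verifying directly that $(\GW\times\GW,\sigma\times\sigma)$ is transitive, working on a basis of cylinder sets and using Claim~\ref{claim_pasting_two_configs} as the gluing engine. Write $F_n := [-n,n]^3$ and, for an admissible pattern $p\in L_{F_n}(\GW)$, let $[p] := \{x\in\GW : x|_{F_n}=p\}$. Since every cylinder of $\GW$ is a finite union of such box cylinders, the family $\{[p] : p\in L_{F_n}(\GW),\ n\in\NN\}$ is a basis of $\GW$, so it is enough to show that for every $n$ and all admissible $p_1,p_2,q_1,q_2\in L_{F_n}(\GW)$ there is a \emph{single} $g\in\ZZ^3$ with $\sigma^{g}[p_1]\cap[p_2]\neq\emptyset$ and $\sigma^{g}[q_1]\cap[q_2]\neq\emptyset$.

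The heart of the argument is the following elementary gluing fact: if $p,q\in L_{F_n}(\GW)$ and $m>2n+5$, then there is $\hat c\in\GW$ with $\hat c|_{F_n}=p$ and with $\hat c|_{F_n+(0,0,m)}$ equal to the $(0,0,m)$-translate of $q$. First I would pick arbitrary $c_p,c_q\in\GW$ with $c_p|_{F_n}=p$ and $c_q|_{F_n}=q$ (these exist because $p,q$ lie in the language), set $c':=\sigma^{(0,0,m)}(c_q)\in\GW$, choose an integer $k$ with $n+2<k<m-n-2$ (possible exactly because $m>2n+5$), and apply Claim~\ref{claim_pasting_two_configs} to $c:=c_p$, $c'$, and $k$, obtaining $\hat c\in\GW$ that agrees with $c_p$ on all coordinates of third component $<k-2$ and with $c'$ on all coordinates of third component $>k+2$. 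Every point of $F_n$ has third coordinate $\le n<k-2$, so $\hat c|_{F_n}=c_p|_{F_n}=p$; every point of $F_n+(0,0,m)$ has third coordinate $\ge m-n>k+2$, so $\hat c|_{F_n+(0,0,m)}=c'|_{F_n+(0,0,m)}$, which unwinding the shift is precisely $q$ translated to position $(0,0,m)$. Translating this through the cylinder/shift conventions, $\hat c$ witnesses $\sigma^{(0,0,m)}[q]\cap[p]\neq\emptyset$, and by symmetry (swapping the roles of $p$ and $q$) also $\sigma^{(0,0,m)}[p]\cap[q]\neq\emptyset$.

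To finish, given $p_1,p_2,q_1,q_2\in L_{F_n}(\GW)$, I would simply fix any $m>2n+5$ and take $g=(0,0,m)$: applying the gluing fact to the pair $(p_2,p_1)$ yields $\sigma^{g}[p_1]\cap[p_2]\neq\emptyset$, and applying it to $(q_2,q_1)$ yields $\sigma^{g}[q_1]\cap[q_2]\neq\emptyset$, with the same $g$. Hence $(\GW\times\GW,\sigma\times\sigma)$ is transitive, so $\GW$ is topologically weakly mixing. I do not expect any genuine obstacle here once Claim~\ref{claim_pasting_two_configs} is available; the only point that needs care is the choice of the pasting height $k$, which must sit strictly above the support $F_n$ of the first pattern and strictly below the shifted support $F_n+(0,0,m)$ of the second, which is exactly what forces the harmless separation bound $m>2n+5$.
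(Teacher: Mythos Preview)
Your proof is correct and follows essentially the same approach as the paper: both arguments reduce to gluing two arbitrary box-patterns along the $z$-direction via Claim~\ref{claim_pasting_two_configs}, and then apply this simultaneously to both coordinates of the product with the same vertical shift. Your version is in fact a bit more explicit about the separation bound needed for the pasting height, but the underlying idea and the key tool are identical.
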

			
			\begin{proof}
				Let $p,q$ be patterns in $L(\GW)$ with support $F$ and $p',q'$ patterns in $L(\GW)$ with support $F'$. Let $m$ be the largest $z$-coordinate of an element of $F$ and $k = m+3$. Choose a vector $u \in \ZZ^3$ such that the third coordinate of any element in $u+F'$ is strictly larger than $k+2$. Choose $c \in [p],\tilde{c} \in [q], c' \in \sigma^{-u}([p']),\tilde{c}' \in \sigma^{-u}([q'])$. By Claim~\ref{claim_pasting_two_configs} there is a configuration $\hat{c}$ which coincides with $c$ for $z < k-2$ and with $\tilde{c}$ for $z > k+2$. Similarly, there is $\hat{c}'$ which coincides with $c'$ for $z < k-2$ and with $\tilde{c}'$ for $z > k+2$. We clearly have that $(\tilde{c},\tilde{c}') \in ([p]\times[q]) \cap \sigma^{-u}([p']\times[q'])$. Thus showing that $\GW^2$ is transitive.
			\end{proof}
		
		\begin{theorem}\label{corolario_lapapa}
			There is a topologically weakly mixing $\mathbb{Z}^{3}$-SFT in the CPE class $3.$ 
		\end{theorem}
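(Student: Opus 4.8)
The plan is to verify that the Good Wave Shift $\GW$ itself is the desired example; all the ingredients have already been assembled in this section, so the proof will be a short assembly argument. First I would recall that $\GW$ is a $\ZZ^3$-SFT, hence an expansive $\ZZ^3$-topological dynamical system with the pseudo-orbit tracing property, placing us squarely in the setting of Section~\ref{section_teofel}. Claim~\ref{claim_measure} provides a shift-invariant fully supported measure on $\GW$, so the hypotheses of Corollary~\ref{corolario_delaescalerita} are met and we obtain $\EP_\alpha(\GW,\sigma)=\R_\alpha(\GW,\sigma)$ for every ordinal $\alpha>0$. In other words, the CPE class and the asymptotic class of $\GW$ coincide.

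Next I would import the computation of the asymptotic class performed above: Claim~\ref{claim_noclasedos} gives $\R_2(\GW,\sigma)\neq\GW^2$ and Claim~\ref{proposition_class_three} gives $\R_3(\GW,\sigma)=\GW^2$, so $\GW$ lies in the asymptotic class $3$ (equivalently, by Theorem~\ref{teorema_asintoticodelflow}). Translating through the identification $\EP_\alpha=\R_\alpha$ from the previous paragraph, it follows that $\EP_3(\GW,\sigma)=\GW^2$ while $\EP_1(\GW,\sigma),\EP_2(\GW,\sigma)\neq\GW^2$, i.e. $\GW$ is in the CPE class $3$. (By Proposition~\ref{prop_onichan} this also shows $\GW$ has topological CPE, though that is subsumed by membership in a CPE class.) Finally, Claim~\ref{claim_weaklymixing} asserts that $\GW$ is topologically weakly mixing, which completes the statement.

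There is essentially no new mathematical content, so there is no genuine obstacle; the only thing to be careful about is the bookkeeping in the translation between the two hierarchies. Specifically, one must check that the class-$3$ statements proved earlier are phrased for $\R_\alpha$, that the asymptotic-class indexing (which starts at $\R_0$) lines up correctly with the CPE-class indexing (which starts at $\EP_1$), and that expansiveness together with the POTP — both automatic for SFTs — plus full support from Claim~\ref{claim_measure} are exactly what Corollary~\ref{corolario_delaescalerita} requires. Once these are dispatched, the theorem follows immediately by citing Claims~\ref{claim_noclasedos}, \ref{proposition_class_three}, \ref{claim_measure} and~\ref{claim_weaklymixing} together with Corollary~\ref{corolario_delaescalerita}.
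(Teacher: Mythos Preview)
Your proposal is correct and follows essentially the same approach as the paper: take $\GW$, invoke Claim~\ref{claim_weaklymixing} for weak mixing, Theorem~\ref{teorema_asintoticodelflow} (or equivalently Claims~\ref{claim_noclasedos} and~\ref{proposition_class_three}) for asymptotic class $3$, Claim~\ref{claim_measure} for a fully supported measure, and then Corollary~\ref{corolario_delaescalerita} to transfer to CPE class $3$. Your added remark about the indexing alignment between $\R_\alpha$ and $\EP_\alpha$ is a sensible sanity check but is already handled by the statement of Corollary~\ref{corolario_delaescalerita}, which gives $\EP_\alpha=\R_\alpha$ for all $\alpha>0$.
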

		
		\begin{proof}
			Consider the Good Wave Shift $\GW$. Claim~\ref{claim_weaklymixing} says that $\GW$ is topologically weakly mixing and Theorem~\ref{teorema_asintoticodelflow} says it is in the asymptotic class $3$. Furthermore Claim~\ref{claim_measure} says it admits a fully supported measure, therefore by Corollary~\ref{corolario_delaescalerita} we conclude it is in the CPE class $3$.
		\end{proof}
	
	\begin{theorem}\label{teoremabonito}
		There is a topologically weakly mixing $\mathbb{Z}^{3}$-SFT with topological CPE which does not have BCE.
	\end{theorem}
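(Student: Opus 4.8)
The plan is to verify that the Good Wave Shift $\GW$ constructed above already witnesses all three required properties, so that the theorem follows by assembling results proved in this section. First I would recall that $\GW$ is a $\ZZ^3$-SFT by construction, since all of its defining rules ($\Sigma$-Structure I--II, $C$-Structure I--II, $\Sigma\times C$-Structure I--III) are expressible through finitely many forbidden patterns. Topological weak mixing is exactly the content of Claim~\ref{claim_weaklymixing}. For topological CPE, I would invoke Theorem~\ref{corolario_lapapa}, which already states that $\GW$ lies in the CPE class $3$; since membership in any CPE class implies topological CPE by Proposition~\ref{prop_onichan}, this settles the second requirement. Unpacking Theorem~\ref{corolario_lapapa} for completeness: Claim~\ref{claim_measure} provides a shift-invariant fully supported measure, Theorem~\ref{teorema_asintoticodelflow} places $\GW$ in the asymptotic class $3$, and Corollary~\ref{corolario_delaescalerita} then transfers this to the CPE hierarchy so that $\EP_3(\GW,\sigma)=\GW^2$ while $\EP_2(\GW,\sigma)\neq \GW^2$.

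It then remains to rule out bounded chain exchangeability, which I would do by contradiction. Suppose $\GW$ had BCE. By Proposition~\ref{prop_roniewin}, any $G$-subshift with BCE satisfies $\R_2(X,\sigma)=X^2$, that is, it lies in asymptotic class $0$, $1$, or $2$. But Claim~\ref{claim_noclasedos} shows that $\R_2(\GW,\sigma)\neq\GW^2$ (and in fact Theorem~\ref{teorema_asintoticodelflow} pins the asymptotic class down to exactly $3$), a contradiction. Hence $\GW$ does not have BCE. Alternatively, because $\GW$ is topologically weakly mixing, Proposition~\ref{proposition_weakly_mixing_top_not_bce} directly equates BCE with being in asymptotic class at most $2$, and the same conclusion follows from Claim~\ref{claim_noclasedos}.

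Since all of the nontrivial work has already been carried out --- establishing the lower bound on the asymptotic class via Claim~\ref{claim_noclasedos}, the upper bound via Claim~\ref{proposition_class_three}, the existence of a fully supported measure via Claim~\ref{claim_measure}, and weak mixing via Claim~\ref{claim_weaklymixing} --- the proof of this theorem is simply the bookkeeping above. The only point worth double-checking is that BCE is used here in the sense of the Definition above, namely uniform $N$-chain exchangeability over \emph{all} finite supports, so that Proposition~\ref{prop_roniewin} applies verbatim; this matches the phrasing of Pavlov's question. I therefore expect no real obstacle at this stage: the difficulty is entirely front-loaded into the construction of the Good Wave Shift and the verification that it sits precisely in the asymptotic class $3$.
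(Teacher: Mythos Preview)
Your proposal is correct and follows essentially the same approach as the paper: assemble Claim~\ref{claim_weaklymixing}, Theorem~\ref{corolario_lapapa} (with Proposition~\ref{prop_onichan} or Theorem~\ref{teoremadelblansharr} for topological CPE), and then rule out BCE via the asymptotic class. If anything, your write-up is slightly cleaner on the last point: you correctly invoke Proposition~\ref{prop_roniewin} (or Proposition~\ref{proposition_weakly_mixing_top_not_bce}) together with Claim~\ref{claim_noclasedos} to exclude BCE, whereas the paper's reference to ``Proposition~\ref{proposition_class_three}'' at that step appears to be a mislabel for one of those propositions.
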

	
	\begin{proof}[Proof of Theorem~\ref{teoremabonito}]
		Consider the Good Wave Shift $\GW$. By Theorem~\ref{corolario_lapapa} $\GW$ is topologically weakly mixing and in the CPE class $3$. By Theorem~\ref{teoremadelblansharr} it must have topological CPE. On the other hand, Theorem~\ref{teorema_asintoticodelflow} says $\GW$ is in the asymptotic class $3$. Proposition~\ref{proposition_class_three} implies that it cannot have BCE.
	\end{proof}
	
As a matter of fact this yields the first known example of a transitive $\ZZ^d$-SFT with topological CPE but which does not have UPE.

\begin{question}
	Is there a $\mathbb{Z}^{2}$-SFT in the CPE class $3$?
\end{question}

\textbf{Acknowledgments}: We thank Nishant Chandgotia for very helpful remarks and prolific discussions concerning this work and Hanfeng Li for valuable comments on an earlier draft. We are also grateful to an anonymous referee for its valuable comments. We also wish to thank PIMS for their support. This research was partially supported by the ANR project CoCoGro (ANR-16-CE40-0005), NSERC (PDF-487919) and CONACyT (287764).

\bibliographystyle{alpha}
\bibliography{ref}

\end{document}